\newcommand{\CC}{\mathbb{C}}
\newcommand{\KK}{\mathbb{K}}
\newcommand{\ZZ}{\mathbb{Z}}
\def\gg{{\mathfrak g}}
\def\aa{{\mathfrak a}}
\def\cal{\mathcal }
\def\Ddots{\mathinner{\mkern1mu\raise\p@
\vbox{\kern7\p@\hbox{.}}\mkern2mu
\raise4\p@\hbox{.}\mkern2mu\raise7\p@\hbox{.}\mkern1mu}}
\theoremstyle{plain}%
 \newtheorem{thm}{Theorem}[section]
 \newtheorem{cor}{Corollary}[section]
 \newtheorem{lem}{Lemma}[section]
 \newtheorem{prop}{Proposition}[section]
 \theoremstyle{definition}
 \newtheorem{defn}{Definition}[section]
 \newtheorem{exa}{Example}[section]
 \theoremstyle{remark}
 \newtheorem{rem}{Remark}[section]
 \numberwithin{equation}{section}
\newfont{\hueca}{msbm10}
\begin{document}

\title{Quadratic Symplectic Lie superalgebras with a filiform module as an odd part}

\thanks{The first author was supported by the Centre for Mathematics of the University of Coimbra - UIDB/00324/2020, funded by the Portuguese Government through FCT/MCTES. Third author was supported  by Agencia Estatal de Investigaci\'on (Spain), grant PID2020-115155GB-I00 (European FEDER support included, UE). Fourth author was supported by the PCI of the UCA `Teor\'\i a de Lie y Teor\'\i a de Espacios de Banach' and by the PAI with project number FQM298.}

\author[Elisabete~Barreiro]{Elisabete~Barreiro}
\address{Elisabete~Barreiro, University of Coimbra, CMUC, Department of Mathematics,  FCTUC,
Largo D. Dinis
3000-143 Coimbra, Portugal.\\
{\em E-mail}: {\tt mefb@mat.uc.pt}}{}

\author[Sa\"{i}d Benayadi]{Sa\"{i}d Benayadi}
\address{Sa\"{i}d Benayadi,
Laboratoire de Math\'{e}matiques IECL UMR CNRS 7502,
Universit\'{e} de Lorraine, 3 rue Augustin Fresnel, BP 45112,
F-57073 Metz Cedex 03, France.
\newline
{\em E-mail}: {\tt said.benayadi@univ-lorraine.fr}}{}

\author[Rosa M. Navarro]{Rosa M. Navarro}
\address{Rosa M. Navarro,
Departamento de Matem{\'a}ticas, Universidad de Extremadura, C{\'a}ceres, Espa\~na.\\ {\em E-mail}: {\tt rnavarro@unex.es}}{}

\author[Jos\'{e} M. S\'{a}nchez]{Jos\'{e} M. S\'{a}nchez}
\address{Jos\'{e} M. S\'{a}nchez, Departamento de Matem\'aticas, Universidad de C\'adiz, Campus de Puerto Real, 11510, Puerto Real, C\'adiz, Espa\~na.\\ {\em E-mail}: {\tt txema.sanchez@uca.es}}{}

\begin{abstract}
The present work studies deeply quadratic symplectic Lie superalgebras, obtaining, in particular, that they are all nilpotent. Consequently, we provide classifications in low dimensions and identify the double extensions that maintain symplectic structures. By means of both elementary odd double extensions and generalized double extensions of quadratic symplectic Lie superalgebras, we obtain an inductive description of quadratic symplectic Lie superalgebras of filiform type.

{\it Keywords}: Quadratic Lie superalgebras; symplectic Lie superalgebras; double extension; generalized double extension; filiform.

{\it 2010 MSC}: 
17A70; 17B05; 17B30; 17B40.
\end{abstract}

\maketitle

\section{Introduction}

The theory of quadratic Lie algebras has gained much attention in recent years, particularly since they have arisen in a number of other disciplines such as geometry and physics. Moreover, quadratic Lie algebras play a crucial role in conformal field theory, and with respect to these algebras, there is a Sugawara construction that exists specifically in the case of quadratic Lie algebras
\cite{11}. Nowadays, finite-dimensional quadratic Lie algebras over a field of characteristic zero are well known (see \cite{4, 8, 10, 12, 15}). The procedure of double extension was introduced by Medina
and Revoy in \cite{15}, which enabled them to give a certain description of
quadratic Lie algebras. More precisely, it is showed that every quadratic Lie algebra can be constructed as a direct sum of irreducible ones, and the latter by a sequence of double extensions. Likewise, the $T^*$-extension, i.e. a construction that generalizes the semidirect product, was given in \cite{8} to describe all solvable quadratic Lie algebras. A different approach, on the other hand, was given recently by Kath and Olbrich in \cite{14*} and the first attempt to generalize it to the context of  quadratic Lie superalgebras can be found in \cite{Doubleextension}. Note that it is more difficult to apply the concept of double extension to classify inductively quadratic Lie superalgebras than to describe inductively quadratic Lie algebras. There are
a number of reasons for this, some of which include a $1$-dimensional subspace of a Lie superalgebra ${\mathfrak g}$ is not necessarily a Lie subsuperalgebra of ${\mathfrak g}$  and  there is no analog to the Lie Theorem for solvable Lie superalgebras. In spite of the difficulties, the authors in \cite{Doubleextension} showed that every irreducible quadratic Lie superalgebra whose centre intersects the even part in a non-trivial way is a double extension. Nevertheless, such a condition is rather restrictive. Later, in \cite{9}, the structure of the Lie superalgebras with reductive even part and the action of the even part on the odd part completely reducible is studied. Afterward,
Benayadi in \cite{5} obtained an inductive description of quadratic Lie superalgebras ${\mathfrak g} = {\mathfrak g}_{\bar 0} \oplus {\mathfrak g}_{\bar 1}$ such that ${\mathfrak g}_{\bar 0}$ is a reductive Lie algebra, and the action of ${\mathfrak g}_{\bar 0}$ on ${\mathfrak g}_{\bar 1}$ is completely reducible. Additionally, the same author  gave an inductive description of the solvable quadratic Lie superalgebras such that the odd part is a completely reducible module on the even part. More recently, the authors in \cite{2} generalized the notion of double extension of quadratic Lie superalgebras in order to present an inductive description of solvable quadratic Lie superalgebras. In particular, it was showed that all solvable quadratic Lie superalgebras are obtained by a sequence of generalized double extensions by $1$-dimensional Lie superalgebras. In \cite{JPAA(2009-ELS)} were presented some non-trivial examples of quadratic Lie superalgebras such that the even part is a reductive Lie algebra and the action of the even part on the odd part is not completely reducible, and was given an inductive description of this class of quadratic Lie superalgebras. More recently, in \cite{quadraticFLSA}, the authors characterized in any dimension,  via double extensions, a very special family of quadratic Lie superalgebras ${\mathfrak g}={\mathfrak g}_{\bar 0}\oplus {\mathfrak g}_{\bar 1}$ such that ${\mathfrak g}_{\bar 0}$ is a reductive Lie algebra and ${\mathfrak g}_{\bar 1}$ is not a completely reducible ${\mathfrak g}_{\bar 0}$-module. In particular, these superalgebras verify that ${\mathfrak g}_{\bar 1}$ is a filiform ${\mathfrak g}_{\bar 0}$-module (filiform type). They showed that the study of quadratic Lie superalgebras of filiform type can be reduced to those that are solvable and  obtained an inductive description of solvable quadratic Lie superalgebras of filiform type via both double extensions and odd double extensions of quadratic ones.

The purpose of our study is to investigate quadratic Lie superalgebras with symplectic structures. Prior to that, however, we analyzed quadratic symplectic Lie superalgebras in general and obtained some significant results. More specifically, we answered some open questions raised in \cite{simplecticLSA}.

\section{Basic concepts and previous results}

Throughout this work, we consider an arbitrary field of characteristic zero.

\subsection{Basic concepts and some results}

We shall establish in this section some definitions needed later on. For an integer $i$ we denote by $\bar{i}$ its correspondent equivalence class in $\mathbb{Z}_2 = \{\bar{0}, \bar{1}\}$. For two integers $i, j$  we use the well-defined notation $(-1)^{\overline{ij}}$ for $(-1)^{ij} \in \{-1, 1\}$. For a $\mathbb{Z}_2$-graded vector space $V = V_{\bar 0} \oplus V_{\bar 1}$ over the field $\KK$, as usual, we write  $V_{\bar 0}$ for its {\em even part} and $V_{\bar 1}$ for its {\em odd part}. An element $X$ of $V$ is called {\it homogeneous} if  either  $X \in V_{\bar 0}$ or $X \in V_{\bar 1}$. In this work, all elements will be supposed to be homogeneous unless indicated otherwise. A linear map $\phi : V \to W$ between two $\mathbb{Z}_2$-graded vector spaces is called {\it even} if $\phi(V_{\bar 0}) \subseteq W_{\bar 0}$ and $\phi(V_{\bar 1}) \subseteq W_{\bar 1}$. While, it is called {\it odd} if $\phi(V_{\bar 0}) \subset W_{\bar 1}$ and $\phi(V_{\bar 1}) \subset W_{\bar 0}$. Clearly, ${\rm Hom}(V,W) = {\rm Hom}(V,W)_{\bar 0} \oplus {\rm Hom}(V,W)_{\bar 1}$, where the first summand comprises all the even linear maps, and the second  all the odd.  Tensor products $V \otimes W$ are $\mathbb{Z}_2$-graded vector spaces, where its even part is   $(V \otimes W)_{\bar 0} := (V_{\bar 0} \otimes W_{\bar 0}) \oplus (V_{\bar 1} \otimes W_{\bar 1})$ and odd part $(V \otimes W)_{\bar 1} := (V_{\bar 0} \otimes W_{\bar 1}) \oplus (V_{\bar 1} \otimes W_{\bar 0})$.
We shall write $\displaystyle X \in {\gg}_x$ to mean that $X$ is a homogeneous element of the Lie
superalgebra $\gg$ of degree $x$, with $x \in \mathbb{Z}_2$.
\begin{defn}
A {\it Lie superalgebra} is a $\mathbb{Z}_2$-graded vector space $\gg = \gg_{\bar 0} \oplus \gg_{\bar 1}$, with an even bilinear operation $[\cdot,\cdot]$, which satisfies the conditions:
\begin{enumerate}
\item [i.] $[X,Y] = -(-1)^{xy}[Y,X]$,
\item [ii.] $(-1)
^{xz} [X,[Y,Z]] + (-1)^{xy} [Y,[Z,X]] + (-1)^{yz} [Z, [X,Y]] = 0$, ({\it super Jacobi identity})
\end{enumerate}
for any $X \in \gg_{x}, Y \in \gg_{y}, Z \in \gg_{z}$, with $x,y,z \in \mathbb{Z}_2$.
\end{defn}

\noindent The general background on Lie superalgebras can be found in \cite{16}.
From the previous definition, $\gg_{\bar 0}$ is a Lie algebra and $\gg_{\bar 1}$ is a $\gg_{\bar 0}$-module. The Lie superalgebra structure also contains the
symmetric pairing $S^2 \gg_{\bar 1} \to \gg_{\bar 0}$, which is a $\gg_{\bar 0}$-morphism and satisfies the super Jacobi identity applied to three
elements of $\gg_{\bar 1}$. 

The definition below gathers some properties of bilinear forms on Lie superalgebras.
\begin{defn}\label{def_22}
Let $\gg$ be a Lie superalgebra. A bilinear form $B$ on $\gg$ is
\begin{enumerate}
\item[(i)]
\textit{supersymmetric} if $ B(X,Y)=(-1)^{xy}B(Y,X)$, for any $\: {X \in \gg_x , Y \in \gg_y}$;
\item[(ii)] \textit{skew-supersymmetric}   if $B(X,Y)=-(-1)^{xy}B(Y,X)$, for $\: {X \in \gg_x , Y \in \gg_y}$;
\item[(iii)]  \textit{non-degenerate} if $X \in \gg$ satisfies $B(X,Y)=0$, for any $\: { Y \in \gg}$, then $X=0$;
\item[(iv)] \textit{invariant}  if $B([X,Y],Z)=B(X,[Y,Z])$, for $\: X,Y,Z \in \gg$;
\item[(v)] \textit{even} if $ B(\gg_{\bar 0},\gg_{\bar 1}) = B(\gg_{\bar 1},\gg_{\bar 0}) = \{0\}$;
\item[(vi)] \textit{scalar $2$-cocycle} on $\gg$ if for any $\: X \in \gg_x , Y \in \gg_y, Z \in \gg_z$,
\begin{eqnarray}
\displaystyle (-1)^{xz}B (X,[Y,Z])+(-1)^{yx}B
(Y,[Z,X])+(-1)^{zy}B (Z,[X,Y])&=&0. \nonumber 
\end{eqnarray}
The set of all scalar $2$-cocycles on $\gg$ is denoted by ${\mathcal Z}^2(\gg,\KK)$;
\item[(vii)] 
\textit{scalar $2$-coboundary} on $\gg$ if there exists $\xi $ in the dual of $  \gg $ such that for all $\: X , Y \in \gg $,
\begin{eqnarray}
\displaystyle  B(X,Y)=\xi ([X,Y]).  \nonumber 
\end{eqnarray}
The set of all scalar $2$-coboundaries on $\gg$ is
denoted by ${\mathcal B}^2(\gg,\KK)$.
\end{enumerate}
\end{defn}

\begin{rem}
There are some authors, see \cite{A,B}, who refer to supersymmetric and skew-symmetric in Definition \ref{def_22} as symmetric and antisymmetric, respectively, for reasons explained in the text.
\end{rem}

\begin{defn}
A Lie superalgebra $\gg$ is called
\begin{enumerate}
\item[(i)] \textit{quadratic} \index{Lie superalgebra!quadratic} if there exists a bilinear form $B$ on $\gg$ such that $B$ is even, supersymmetric, non-degenerate, and invariant. It is denoted by $(\gg,B)$ and $B$ is called an \textit{invariant scalar product} on $\gg$.
\item [(ii)]    \textit{symplectic} \index{Lie superalgebra!symplectic} if there exists a bilinear form $\omega$ on $\gg$ such that it is an even, skew-supersymmetric, non-degenerate, and scalar $2$-cocycle on $\gg$. In this case, it is denoted by $(\gg,\omega)$ and $\omega$ is said a \textit{symplectic structure} on $\gg$.
\item[(iii)] \textit{quadratic symplectic} \index{Lie superalgebra!quadratic symplectic} if $\gg$ is equipped with an invariant scalar product $B$ and a symplectic structure $\omega$. We denote it by $(\gg,B,\omega)$.
\end{enumerate}
\end{defn}

\begin{rem}
In the literature, symplectic Lie (super)-algebras are sometimes referred to as quasi-Frobenius, see, e.g., \cite{Sofiane, E, F2}.
\end{rem}

\noindent A {\it superderivation} $D$ of degree $d$ of ${\mathfrak g}$, with $d \in \mathbb{Z}_2$,
is a linear map on ${\mathfrak g}$ homogeneous of degree
$d$ that satisfies $$D([X,Y]) = [D(X),Y] + (-1)^{dx}[X,D(Y)]$$ for any $X \in \gg_x , Y \in \gg$. Let ${\rm Der}({\mathfrak g}) = {\rm Der}({\mathfrak g})_{\bar 0} \oplus {\rm Der}({\mathfrak g})_{\bar 1}$ be the space of all derivations on ${\mathfrak g}$.

\begin{defn} Let ${\mathfrak g}$ be a Lie superalgebra and $B$ a  bilinear form on $\gg$. A homogeneous superderivation $D$ on $\gg$ of degree $d$ is called \emph{skew-supersymmetric} if $$B(D(X),Y)=-(-1)^{dx}B(X,D(Y)), $$ for any $X \in \gg_x,  Y \in \gg$.
We denote by ${\rm Der}_a(\gg)$ the subspace of ${\rm Der}(\gg)$ spanned by all the homogeneous skew-supersymmetric superderivations of $\gg$. 
\end{defn}

\begin{defn} Let $\omega : {\mathfrak g} \times {\mathfrak g} \to  {\mathbb K}$  be a non-degenerate  bilinear form. To each superderivation $D \in {\rm Der}({\mathfrak g})_d$ we can assign a unique linear map $D^*$, called the {\it adjoint of} $D$, that satisfies $$\omega(D(X),Y) = (-1)^{xd}\omega(X,D^*(Y)),$$
\end{defn}
\noindent for any $X \in \gg_x , Y \in \gg$. It is clear to see that $D^*$ is also a superderivation.

\begin{defn}
 Given a Lie superalgebra $\gg$, a $\mathbb{Z}_2$-graded vector space $A$ and $\pi : \gg \to {\mathfrak g}{\mathfrak l}(A)$
a representation of $\gg$ on $A$.
We recall that $A$ is a $\gg${\it -module} and we denote $  Xa := \pi(X)(a)$, for $X\in \gg$ and $a\in A$.
\end{defn}

\noindent If $(\gg = \gg_{\bar 0} \oplus \gg_{\bar 1}, [\cdot,\cdot])$ is a Lie superalgebra we denote $\rho_g : \gg_{\bar 0} \to {\mathfrak g}{\mathfrak l}(\gg_{\bar 1})$ the representation of $\gg_{\bar 0}$ in $\gg_{\bar 1}$ defined by $\rho_g(X)(Y) := [X,Y]$ for all $X \in \gg_{\bar 0}, Y \in \gg_{\bar 1}$.
The following concept of filiform module was firstly introduced in  \cite{JGP} for defining filiform Lie superalgebras.

\begin{defn}
Let $\gg=\gg_{\bar 0}\oplus \gg_{\bar 1}$ be a Lie superalgebra with $\dim(\gg_{\bar 1})=m > 0$. We said that $\gg_{\bar 1}$ has the structure of \textit{filiform $\gg_{\bar 0}$-module} if the action of $\gg_{\bar 0}$ on $\gg_{\bar 1}$ defines a {\it flag}, that is, a decreasing subsequence of vector subspaces in its underlying vector space $\gg_{\bar 1}$, $$\gg_{\bar 1}=V_m \supset \dots \supset V_1
\supset V_0,$$ with $\dim(V_i)=i$ and such that $[\gg_{\bar 0},V_{i+1}]= V_i$, for $i \in \{0,\dots,m-1\}$. To abbreviate, in the sequel we will refer to $\gg=\gg_{\bar 0} \oplus \gg_{\bar 1}$ as a Lie superalgebra of \textit{filiform type}.
\end{defn}

We recall now the {\em descending central sequence} of a Lie superalgebra $\gg = \gg_{\bar 0} \oplus \gg_{\bar 1}$ which is defined in the same way as for Lie algebras, that is, $${\cal C}^0(\gg): = \gg, \hspace{0.2cm} {\cal C}^{k+1}(\gg) := [{\cal C}^k(\gg),\gg]$$ for $k \geq 0$. In case ${\cal C}^k(\gg)=\{0\}$ for some $k \in \mathbb{N}$, then the Lie superalgebra $\gg  $ is
called {\em nilpotent}. Nevertheless, there are also defined two others descending sequences denoted by ${\cal C}^k(\gg_{\bar 0})$ and ${\cal C}^k(\gg_{\bar 1})$ which will be also important in our study. They are defined as ${\cal C}^0(\gg_{\bar i}) := \gg_{\bar i}, {\cal C}^{k+1}(\gg_{\bar i}) := [\gg_{\bar 0}, {\cal C}^k(\gg_{\bar i})],$ for $k \geq 0, {\bar i} \in \ZZ_2.$

\begin{defn}
Let $\gg=\gg_{\bar 0} \oplus \gg_{\bar 1}$ be a solvable Lie superalgebra. We say that $\gg$ has {\em super-nilindex} or $s${\em -nilindex} $(p,q)$ if it satisfies $${\cal C}^{p-1}(\gg_{\bar 0}) \neq \{0\}, \qquad {\cal C}^{q-1}(\gg_{\bar 1}) \neq \{0\}, \qquad {\cal C}^p(\gg_{\bar 0}) = {\cal C}^q(\gg_{\bar 1})=\{0\}.$$
\end{defn}

\noindent The following result establishes an equivalent definition of filiform module structure.

\begin{lem}  \cite{quadraticFLSA}
If $\gg = \gg_{\bar 0} \oplus \gg_{\bar 1}$ is a solvable Lie superalgebra with {\em super-nilindex} $(-,\dim(\gg_{\bar 1})),$ then $\gg_{\bar 1}$ has filiform $\gg_{\bar 0}$-module structure.
\end{lem}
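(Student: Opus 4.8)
The plan is to read the filiform flag straight off the descending sequence ${\cal C}^k(\gg_{\bar 1})$ that already appears in the definition of the super-nilindex, so no new construction is needed. Write $m = \dim(\gg_{\bar 1})$; the hypothesis that the super-nilindex equals $(-,m)$ says precisely that ${\cal C}^{m-1}(\gg_{\bar 1}) \neq \{0\}$ while ${\cal C}^m(\gg_{\bar 1}) = \{0\}$. First I would record two elementary structural facts about this sequence. Since $\gg_{\bar 1}$ is a $\gg_{\bar 0}$-module, a short induction on $k$ gives ${\cal C}^{k+1}(\gg_{\bar 1}) = [\gg_{\bar 0}, {\cal C}^k(\gg_{\bar 1})] \subseteq {\cal C}^k(\gg_{\bar 1})$, so the sequence is nested and each term is automatically a $\gg_{\bar 0}$-submodule, invariance being immediate from $[\gg_{\bar 0}, {\cal C}^k(\gg_{\bar 1})] = {\cal C}^{k+1}(\gg_{\bar 1}) \subseteq {\cal C}^k(\gg_{\bar 1})$.

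The second fact is a \emph{stabilization property}: if ${\cal C}^{k+1}(\gg_{\bar 1}) = {\cal C}^k(\gg_{\bar 1})$ for some $k$, then applying $[\gg_{\bar 0}, -]$ repeatedly forces ${\cal C}^j(\gg_{\bar 1}) = {\cal C}^k(\gg_{\bar 1})$ for all $j \geq k$. Combined with ${\cal C}^m(\gg_{\bar 1}) = \{0\}$ and ${\cal C}^{m-1}(\gg_{\bar 1}) \neq \{0\}$, this makes every inclusion strict for $0 \leq k \leq m-1$: were equality to occur at some step below $m$, the sequence would freeze at that common value and could not simultaneously be nonzero at step $m-1$ and zero at step $m$.

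The heart of the argument is then a dimension count. Strictness gives $\dim {\cal C}^{k+1}(\gg_{\bar 1}) \leq \dim {\cal C}^k(\gg_{\bar 1}) - 1$ for each $k$ from $0$ to $m-1$; telescoping from $\dim {\cal C}^0(\gg_{\bar 1}) = m$ down to $\dim {\cal C}^m(\gg_{\bar 1}) = 0$ yields $0 \leq m - m = 0$, so every inequality is in fact an equality and $\dim {\cal C}^k(\gg_{\bar 1}) = m - k$. Setting $V_i := {\cal C}^{m-i}(\gg_{\bar 1})$ for $0 \leq i \leq m$ then produces the required flag: $\dim V_i = i$, $V_m = \gg_{\bar 1}$, the inclusions are strict because the dimensions are distinct, and $[\gg_{\bar 0}, V_{i+1}] = [\gg_{\bar 0}, {\cal C}^{m-i-1}(\gg_{\bar 1})] = {\cal C}^{m-i}(\gg_{\bar 1}) = V_i$, which is exactly the defining condition of a filiform $\gg_{\bar 0}$-module. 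I do not anticipate a serious obstacle; the only point needing care is the stabilization step, since it is what upgrades ``the chain is decreasing'' to ``the chain is strictly decreasing,'' and without it the forced dimension drop of exactly one at each stage would fail. Solvability of $\gg$ enters only through the hypothesis, guaranteeing that the super-nilindex is defined and that the sequence genuinely terminates at $\{0\}$.
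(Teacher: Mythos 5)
Your proof is correct and is essentially the intended argument: it amounts to verifying the codimension-one criterion $\dim\left({\cal C}^{i-1}(\gg_{\bar 1})/{\cal C}^i(\gg_{\bar 1})\right)=1$ recorded in the paper by combining the stabilization property of the nested sequence ${\cal C}^k(\gg_{\bar 1})$ with the dimension count forced by ${\cal C}^{m-1}(\gg_{\bar 1})\neq\{0\}$ and ${\cal C}^m(\gg_{\bar 1})=\{0\}$, and then reading the flag off $V_i:={\cal C}^{m-i}(\gg_{\bar 1})$. The paper only cites this lemma from \cite{quadraticFLSA} without reproducing a proof, but your route matches its framework exactly.
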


\begin{rem}
In particular whenever we have a nilpotent action over $\gg_{\bar 1}$ the sequence ${\cal C}^0(\gg_{\bar 1}) := \gg_{\bar 1}, {\cal C}^{k+1}(\gg_{\bar 1}) := [\gg_{\bar 0}, {\cal C}^k(\gg_{\bar 1})],$ $k\geq 0$, defines a {\it filtration} over $\gg_{\bar 1}$, i.e. a decreasing sequence of submodules ${\cal C}^0(\gg_{\bar 1}) \supset {\cal C}^1(\gg_{\bar 1}) \supset {\cal C}^2(\gg_{\bar 1}) \dots$ 
\end{rem}

\begin{lem}\label{lem} \cite{quadraticFLSA} 
A necessary and sufficient condition to get a filiform $\mathfrak g_{\bar 0}$-module structure over the odd part of a Lie superalgebra $\mathfrak g$ is $\dim \left( {\cal C}^{i-1}(\mathfrak g_{\bar 1})/{\cal C}^i(\mathfrak g_{\bar 1}) \right) = 1$ for $1 \leq i \leq \dim(\mathfrak g_{\bar 1})$.
\end{lem}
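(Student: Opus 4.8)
The plan is to prove both implications by identifying the terms of the descending sequence ${\cal C}^k(\gg_{\bar 1})$ with the members of a filiform flag, after first recording that these sequences form a decreasing chain of $\gg_{\bar 0}$-submodules. Throughout set $m := \dim(\gg_{\bar 1})$.

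First I would establish the structural preliminary already announced in the Remark: for every $k \ge 0$ the subspace ${\cal C}^k(\gg_{\bar 1})$ is a $\gg_{\bar 0}$-submodule, and consequently ${\cal C}^{k+1}(\gg_{\bar 1}) \subseteq {\cal C}^k(\gg_{\bar 1})$. This is an induction on $k$: the base case ${\cal C}^0 = \gg_{\bar 1}$ is the whole module, and if ${\cal C}^k$ is a submodule then ${\cal C}^{k+1} = [\gg_{\bar 0},{\cal C}^k] \subseteq {\cal C}^k$. For the inductive step, the super Jacobi identity applied to $X,Y \in \gg_{\bar 0}$ and $Z \in {\cal C}^k$ gives $[X,[Y,Z]] = [[X,Y],Z] + [Y,[X,Z]]$; the first summand lies in $[\gg_{\bar 0},{\cal C}^k]={\cal C}^{k+1}$, and since $[X,Z] \in {\cal C}^{k+1} \subseteq {\cal C}^k$ so does the second, whence $[\gg_{\bar 0},{\cal C}^{k+1}] \subseteq {\cal C}^{k+1}$. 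In particular each quotient ${\cal C}^{i-1}/{\cal C}^i$ is a well-defined vector space, so the dimensional statement is meaningful.

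For the direct implication, assume $\gg_{\bar 1}$ carries a filiform $\gg_{\bar 0}$-module structure with flag $\gg_{\bar 1} = V_m \supset \cdots \supset V_0$, $\dim V_i = i$ and $[\gg_{\bar 0}, V_{i+1}] = V_i$. I would show by induction that ${\cal C}^k(\gg_{\bar 1}) = V_{m-k}$ for $0 \le k \le m$: indeed ${\cal C}^0 = \gg_{\bar 1} = V_m$, and if ${\cal C}^k = V_{m-k}$ then ${\cal C}^{k+1} = [\gg_{\bar 0}, V_{m-k}] = V_{m-k-1}$ by the flag condition. Hence ${\cal C}^{i-1}/{\cal C}^i = V_{m-i+1}/V_{m-i}$ has dimension $(m-i+1)-(m-i)=1$ for every $1 \le i \le m$.

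For the converse, assume $\dim({\cal C}^{i-1}/{\cal C}^i) = 1$ for $1 \le i \le m$. Since $\dim {\cal C}^0 = m$ and each quotient drops the dimension by exactly one, a short induction gives $\dim {\cal C}^k = m-k$; in particular ${\cal C}^m = \{0\}$ and every inclusion ${\cal C}^i \subsetneq {\cal C}^{i-1}$ is strict. Setting $V_i := {\cal C}^{m-i}(\gg_{\bar 1})$ then produces a strictly decreasing chain $\gg_{\bar 1} = V_m \supset \cdots \supset V_0 = \{0\}$ with $\dim V_i = i$, while the bracket condition $[\gg_{\bar 0}, V_{i+1}] = [\gg_{\bar 0}, {\cal C}^{m-i-1}] = {\cal C}^{m-i} = V_i$ holds by the very definition of the descending sequence, so this is a filiform module structure. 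The argument is essentially bookkeeping; the only point demanding care is the preliminary decreasing-chain and submodule property, without which the quotients appearing in the statement would not even be defined, together with the verification that the flag dimensions come out exactly right, decreasing by one at each step and reaching zero precisely at index $m$.
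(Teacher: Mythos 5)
Your proof is correct, and since the paper only cites this lemma from \cite{quadraticFLSA} without reproducing an argument, there is nothing to contrast it with; your route --- checking that the ${\cal C}^k(\gg_{\bar 1})$ form a decreasing chain of $\gg_{\bar 0}$-submodules and then identifying ${\cal C}^k(\gg_{\bar 1})$ with $V_{m-k}$ in both directions --- is exactly the natural bookkeeping argument the statement calls for.
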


\noindent Henceforth, a usual,  we will denote the center of $\gg$ by $\mathfrak{z}(\gg) := \{X \in \gg : [X,\gg]=0\}$.

\begin{lem}
\label{lem: V1z(g)} \cite{quadraticFLSA} Let  $({\mathfrak g}={\mathfrak g}_{\bar 0}\oplus {\mathfrak g}_{\bar 1},B)$ be a quadratic Lie superalgebra with $dim({\mathfrak g}_{\bar 1})=m > 0$ such that ${\mathfrak g}_{\bar 1}$ has the structure of  filiform ${\mathfrak g}_{\bar 0}$-module concerning to the flag ${\mathfrak g}_{\bar 1}=V_m \supset \dots \supset V_1 \supset V_0$. 
Then $\displaystyle \mathfrak z(\mathfrak g) $ intersects $ {\mathfrak g}_{\bar 1}$. More precisely,  $\displaystyle V_1 \subseteq \mathfrak z(\mathfrak g)  \cap {\mathfrak g}_{\bar 1}$. 
\end{lem}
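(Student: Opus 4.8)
The plan is to show directly that $V_1 \subseteq \mathfrak z(\gg)$; since $V_1 \subseteq \gg_{\bar 1}$ this immediately yields $V_1 \subseteq \mathfrak z(\gg) \cap \gg_{\bar 1}$ and in particular that the centre meets $\gg_{\bar 1}$. Fix a nonzero $v \in V_1$. The filiform hypothesis gives $[\gg_{\bar 0}, V_{i+1}] = V_i$ for each $i$, and for $i = 0$ this reads $[\gg_{\bar 0}, V_1] = V_0 = \{0\}$. Hence $[\gg_{\bar 0}, v] = 0$, i.e. $v$ is annihilated by the whole even part; by skew-supersymmetry of the bracket this is the same as $[v, \gg_{\bar 0}] = 0$. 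It therefore remains only to check that $v$ commutes with the odd part as well, that is, $[v, \gg_{\bar 1}] = 0$.

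For this I would exploit the invariant scalar product $B$. First note that since $B$ is even, the subspaces $\gg_{\bar 0}$ and $\gg_{\bar 1}$ are $B$-orthogonal, so the non-degeneracy of $B$ on $\gg$ forces the restriction $B|_{\gg_{\bar 0} \times \gg_{\bar 0}}$ to be non-degenerate as well. Now take an arbitrary $w \in \gg_{\bar 1}$; since $v$ and $w$ are both odd, the bracket $[v, w]$ lies in $\gg_{\bar 0}$. For any $X \in \gg_{\bar 0}$, invariance of $B$ gives $B(X, [v, w]) = B([X, v], w)$, and the right-hand side vanishes because $[X, v] \in [\gg_{\bar 0}, V_1] = \{0\}$. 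Thus $B(X, [v, w]) = 0$ for every $X \in \gg_{\bar 0}$.

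Since $[v, w] \in \gg_{\bar 0}$ and $B|_{\gg_{\bar 0} \times \gg_{\bar 0}}$ is non-degenerate, this identity forces $[v, w] = 0$. As $w \in \gg_{\bar 1}$ was arbitrary we obtain $[v, \gg_{\bar 1}] = 0$, and combined with $[v, \gg_{\bar 0}] = 0$ this shows $v \in \mathfrak z(\gg)$. Because $v \in V_1$ was arbitrary, $V_1 \subseteq \mathfrak z(\gg) \cap \gg_{\bar 1}$, as claimed.

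I expect no serious obstacle: the argument is short, and the only points that require care are (i) applying invariance in the correct direction so that the annihilation $[\gg_{\bar 0}, V_1] = \{0\}$ coming from the flag can be invoked, and (ii) justifying that non-degeneracy descends to the even part, which is precisely where the evenness of $B$ is used. The essential idea is that the ``bottom'' step $V_0 = \{0\}$ of the filiform flag makes $V_1$ central for the $\gg_{\bar 0}$-action, and the quadratic structure then propagates this centrality to the $\gg_{\bar 1}$-action by transporting the vanishing through $B$.
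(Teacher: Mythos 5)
Your argument is correct and complete: the flag condition $[\gg_{\bar 0},V_1]=V_0=\{0\}$ kills the even action, and invariance of $B$ together with the non-degeneracy of $B|_{\gg_{\bar 0}\times\gg_{\bar 0}}$ (which follows from evenness) forces $[V_1,\gg_{\bar 1}]=0$. The paper states this lemma only by citation to \cite{quadraticFLSA} without reproducing a proof, and your argument is precisely the standard one expected there.
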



\begin{lem} \label{lem: V1Vi} \cite{quadraticFLSA} Let $({\mathfrak g}={\mathfrak g}_{\bar 0}\oplus {\mathfrak g}_{\bar 1},B)$ be a quadratic Lie superalgebra with $dim({\mathfrak g}_{\bar 1})=m > 0$ such as ${\mathfrak g}_{\bar 1}$ has the structure of filiform ${\mathfrak g}_{\bar 0}$-module pertaining to the flag $\displaystyle {\mathfrak g}_{\bar 1} = V_m \supset \dots \supset V_1 \supset V_0$. Then $\displaystyle V_1$ is orthogonal to $\displaystyle  V_i$ with respect to $B$, wherever $1 \leq i \leq m-1$.
\end{lem}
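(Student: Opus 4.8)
The plan is to exploit two facts already available: that $V_1$ lies in the centre of $\gg$ (Lemma \ref{lem: V1z(g)}), and that the filiform condition realizes each $V_i$ with $i \le m-1$ as the bracket $[\gg_{\bar 0}, V_{i+1}]$. Feeding these into the invariance of $B$ should collapse the pairing $B(V_1, V_i)$ to zero. The whole argument is thus a short computation that combines the two preceding lemmas with property (iv) of Definition \ref{def_22}.

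Concretely, I would fix $v \in V_1$ and $w \in V_i$ with $1 \le i \le m-1$. Because $[\gg_{\bar 0}, V_{i+1}] = V_i$ and the hypothesis $i+1 \le m$ guarantees that $V_{i+1}$ is a genuine member of the flag, $w$ is a finite sum of elements of the form $[X,u]$ with $X \in \gg_{\bar 0}$ and $u \in V_{i+1}$. By bilinearity it suffices to treat a single such summand, so I may assume $w = [X,u]$.

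Next I would invoke invariance of $B$ in the form $B(v,[X,u]) = B([v,X],u)$, which is the identity $B([A,B],C)=B(A,[B,C])$ of Definition \ref{def_22}(iv) applied to the homogeneous triple $(v,X,u)$; this is legitimate verbatim since $B$ is even and invariant. By Lemma \ref{lem: V1z(g)}, $v \in V_1 \subseteq \mathfrak{z}(\gg)$, hence $[v,X]=0$ and therefore $B(v,[X,u])=0$. Summing over the decomposition of $w$ gives $B(v,w)=0$, and letting $v$ and $w$ range over $V_1$ and $V_i$ yields $B(V_1,V_i)=\{0\}$ for every $1 \le i \le m-1$.

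I expect no genuine obstacle here; the only delicate point is the index bookkeeping. The restriction $i \le m-1$ is precisely what licenses the step $V_i = [\gg_{\bar 0}, V_{i+1}]$, since for $i=m$ there is no $V_{m+1}$ to bracket against. This bound is in fact sharp: as $B$ is even we have $B(\gg_{\bar 1},\gg_{\bar 0})=\{0\}$, so non-degeneracy of $B$ forces the nonzero space $V_1 \subseteq \gg_{\bar 1}$ to pair nontrivially with $V_m = \gg_{\bar 1}$, and hence $V_1$ cannot be orthogonal to $V_m$.
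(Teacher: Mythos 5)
Your proof is correct. The paper imports this lemma from \cite{quadraticFLSA} without reproducing an argument, but the route you take --- decompose $w\in V_i$ as a sum of brackets $[X,u]$ using the filiform identity $V_i=[\gg_{\bar 0},V_{i+1}]$ (valid precisely because $i\le m-1$), then apply invariance $B(v,[X,u])=B([v,X],u)$ and the centrality $V_1\subseteq \mathfrak{z}(\gg)$ from Lemma \ref{lem: V1z(g)} --- is exactly the expected one, and your closing observation that non-degeneracy of the even form $B$ on $\gg_{\bar 1}$ makes the bound $i\le m-1$ sharp is also accurate.
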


\subsection{Double extension of quadratic   Lie superalgebras }

We will next review the concept of double extensions of Lie superalgebras.

\begin{thm}\cite[Theorem 1]{Doubleextension} \label{thm_Doubleextension}
Let $({\mathfrak a}, B_1)$ be a quadratic Lie superalgebra, ${\mathfrak h}$ a Lie superalgebra and $\psi : {\mathfrak h} \to {\rm Der}_a({\mathfrak a})$ a morphism of Lie superalgebras.
Let $\varphi$ be the map from ${\mathfrak a} \times {\mathfrak a}$ to ${\mathfrak h}^*$, defined by 
$$\varphi(X,Y)(Z) = (-1)^{(x+y)z}B_1(\psi(Z)(X),Y),$$
for any $X \in {\mathfrak a}_x, Y \in {\mathfrak a}_y,  Z \in {\mathfrak h}_z$. Let $\pi$ be the coadjoint representation of ${\mathfrak h}$. Then the vector space ${\mathfrak g}={\mathfrak h} \oplus {\mathfrak a} \oplus {\mathfrak h}^*$ with the product
\begin{align*}
[X_2+X_1+f, Y_2+Y_1+g] &= [X_2,Y_2]_{{\mathfrak h}} + [X_1,Y_1]_{{\mathfrak a}} + \psi(X_2)(Y_1) - (-1)^{xy} \psi(Y_2)(X_1)\\
&+ \pi(X_2)(g)-(-1)^{xy} \pi(Y_2)(f)+\varphi(X_1,Y_1),
\end{align*}
where $X_2+X_1+f$ (resp. $Y_2+Y_1+g$) is homogeneous of degree $x$ (resp. $y$) in $\gg$, is a Lie superalgebra.
 
Moreover, if $\gamma$ is a ${\mathfrak h}$-invariant supersymmetric bilinear form, then the bilinear form $T$, defined on ${\mathfrak g}$ by $$T(X_2+X_1+f, Y_2+Y_1+g) := B_1(X_1,Y_1)+\gamma(X_2,Y_2)+f(Y_2)+(-1)^{xy}g(X_2)$$ where $X_2+X_1+f$ and $Y_2+Y_1+g$ are  homogeneous of respective degrees $x$, $y$, is an invariant scalar product on $\gg$.

The Lie superalgebra $\gg$ is called the double extension of $({\mathfrak a},B_1)$ by ${\mathfrak h}$ (by means of $\psi$).
\end{thm}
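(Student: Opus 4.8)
The plan is to prove the two assertions separately. For the first, that the given bracket makes $\gg = \hh \oplus \aa \oplus \hh^*$ into a Lie superalgebra, I would check evenness, super skew-symmetry and the super Jacobi identity. Evenness is bookkeeping: $[\cdot,\cdot]_\hh$, $[\cdot,\cdot]_\aa$, the coadjoint representation $\pi$ and the morphism $\psi$ are all even, and since $B_1$ is even the functional $\varphi(X,Y)$ vanishes on $\hh_z$ unless $z = x+y$, so that $\varphi(X,Y) \in (\hh^*)_{x+y}$, exactly as the grading of $\gg$ demands. For super skew-symmetry I would treat the three components of $[u,v]$ in turn; the $\hh$- and $\aa$-components reduce at once to skew-symmetry in $\hh$, skew-symmetry in $\aa$ and the symmetric placement of the two $\psi$-terms, while the $\hh^*$-component needs the single identity
$$\varphi(X,Y) = -(-1)^{xy}\varphi(Y,X), \qquad X \in \aa_x,\ Y \in \aa_y.$$
This I would obtain by evaluating on $Z \in \hh_z$ and combining the skew-supersymmetry of $\psi(Z)$ (available because $\psi(\hh) \subseteq \mathrm{Der}_a(\aa)$) with the supersymmetry of $B_1$; the two Koszul signs collapse to exactly the displayed sign.

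The super Jacobi identity is the heart of the argument and the step I expect to be the main obstacle. By trilinearity it suffices to test homogeneous triples $(u,v,w)$ whose entries each lie in one of $\hh$, $\aa$, $\hh^*$, comparing the $\hh$-, $\aa$- and $\hh^*$-components separately. Many cases collapse quickly: the all-$\hh$ case is the Jacobi identity of $\hh$; the relations $[\hh^*,\hh^*] = [\aa,\hh^*] = 0$ (so $\hh^*$ is an abelian ideal annihilated by $\aa$) kill most mixed terms; the coadjoint property of $\pi$ disposes of the triples with two $\hh$-entries and an $\hh^*$-entry; and the derivation rule for $\psi(X_2)$ disposes of the triples with one $\hh$-entry and two $\aa$-entries. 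Two cases are genuinely delicate. First, two $\hh$-entries and one $\aa$-entry: here the required cancellation in the $\aa$-component is precisely the statement that $\psi$ is a morphism of Lie superalgebras, $\psi([X_2,Y_2]_\hh) = \psi(X_2)\psi(Y_2) - (-1)^{xy}\psi(Y_2)\psi(X_2)$, so that $\psi$ makes $\aa$ an $\hh$-module. Second, three $\aa$-entries: the $\aa$-component is the Jacobi identity of $\aa$, but the $\hh^*$-component forces a cocycle-type relation
$$(-1)^{xz}\varphi(X_1,[Y_1,Z_1]_\aa) + (-1)^{yx}\varphi(Y_1,[Z_1,X_1]_\aa) + (-1)^{zy}\varphi(Z_1,[X_1,Y_1]_\aa) = 0,$$
which I would establish by evaluating on $W \in \hh_w$, pushing $\psi(W)$ through each inner bracket by the derivation rule, using the invariance of $B_1$ to shift $\psi(W)$ onto a single factor, and closing with the super Jacobi identity of $\aa$. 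Tracking the accumulated Koszul signs across these reductions is where I expect the computation to be most demanding.

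For the second assertion I would verify that $T$ is even, supersymmetric, non-degenerate and invariant. Evenness and supersymmetry follow from the same properties of $B_1$ and $\gamma$ together with the evenness of the evaluation pairing $\hh \times \hh^* \to \KK$, once the Koszul sign in the definition of $T$ is matched. Non-degeneracy is immediate and does not even use $\gamma$: if $T(X_2 + X_1 + f, \cdot) = 0$, then pairing against all of $\aa$ gives $X_1 = 0$ by non-degeneracy of $B_1$, pairing against all of $\hh^*$ gives $X_2 = 0$, and pairing against all of $\hh$ then gives $f = 0$. The invariance $T([u,v],w) = T(u,[v,w])$ is once more a case check, and it is here that the precise formula for $\varphi$ is needed: the representative case $u = X_1 \in \aa$, $v = Y_1 \in \aa$, $w = Z_2 \in \hh$ reduces, after discarding the pairings that vanish on grading grounds, to
$$\varphi(X_1,Y_1)(Z_2) = -(-1)^{yz}B_1(X_1,\psi(Z_2)(Y_1)),$$
which is exactly the definition of $\varphi$ rewritten through the skew-supersymmetry of $\psi(Z_2)$. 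The remaining cases rest on the invariance of $B_1$, the $\hh$-invariance of $\gamma$, and the duality between $\pi$ and the evaluation pairing, and introduce no difficulty beyond sign bookkeeping.
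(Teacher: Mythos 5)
The paper does not prove this statement: it is quoted verbatim, with attribution, from Benamor--Benayadi (reference \cite{Doubleextension}, Theorem 1), so there is no in-paper argument to compare yours against. Your blind proof is the standard direct verification and is sound: the skew-symmetry identity $\varphi(X,Y)=-(-1)^{xy}\varphi(Y,X)$ does follow from the skew-supersymmetry of $\psi(Z)$ combined with the supersymmetry of $B_1$ (the Koszul signs collapse as you claim), the case analysis of the super Jacobi identity is organized correctly, and the invariance computation for $T$ in the representative case $(X_1,Y_1,Z_2)$ reduces to exactly the identity $\varphi(X_1,Y_1)(Z_2)=-(-1)^{yz}B_1(X_1,\psi(Z_2)(Y_1))$, which is the definition of $\varphi$ rewritten via skew-supersymmetry. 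The non-degeneracy argument is also correct, including the order in which you pair against $\aa$, $\hh^*$ and $\hh$.

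One bookkeeping remark: you attribute the triple with one $\hh$-entry and two $\aa$-entries entirely to the derivation rule for $\psi(X_2)$. That settles its $\aa$-component, but this triple also has a nontrivial $\hh^*$-component, namely the cancellation of $\pi(X_2)\bigl(\varphi(Y_1,Z_1)\bigr)$ against $\varphi\bigl(Y_1,\psi(X_2)(Z_1)\bigr)$ and $\varphi\bigl(Z_1,\psi(X_2)(Y_1)\bigr)$; verifying it uses the definition of the coadjoint action, the fact that $\psi$ is a morphism of Lie superalgebras, and the skew-supersymmetry of $\psi(X_2)$ with respect to $B_1$. The identity does hold, and the ingredients are all hypotheses you invoke elsewhere, so this is an omission in the case inventory rather than a step that would fail; a full write-up should include it.
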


Actually, we work with a particular case of double extension of Lie superalgebras which we refer now. Let $\displaystyle (\aa,B)$ be a quadratic Lie superalgebra and $\displaystyle D$ an even skew-supersymmetric superderivation of $\displaystyle (\aa,B)$. We consider the double extension $\displaystyle (\gg =\mathbb{K}e \oplus \aa \oplus \mathbb{K}e^*, \widetilde{B})$ of $\displaystyle (\aa,B)$ by the one-dimensional Lie algebra $\displaystyle \mathbb{K}e$ (by means of $D$). The multiplication on $\gg$ is defined by
\begin{eqnarray*}
\displaystyle \hspace*{0cm}[e,X]  = D(X), &
[X,Y]  =  [X,Y]_{\aa}+B(D(X),Y)  e^*,
& [e^*,\gg] = \{0\},
\end{eqnarray*}
for any $X,Y \in \aa$. Moreover, the invariant scalar product $\displaystyle \widetilde{B}$ on $\gg$ is given by
\begin{eqnarray*}
\displaystyle \widetilde{B}\mid_{\aa \times \aa}\: =\:B, 
&  \widetilde{B}(e^* ,e)\: =\: 1, 
&  \widetilde{B}(\aa,e)=  \widetilde{B}(\aa,e^*)\: =\: \{0\} , 
\widetilde{B}(e,e)=  \widetilde{B}(e^*,e^*)\: =\: 0.
\end{eqnarray*}


\section{Symplectic extensions of nilpotent Lie superalgebras of filiform type}

Here, we will introduce the concept of a symplectic double extension of a symplectic Lie superalgebra
as introduced in \cite{Sofiane}.  

It is a very well-known fact that if $\gg=\gg_{\bar 0}\oplus \gg_{\bar 1}$ is a  non-zero  nilpotent Lie superalgebra then the center of the superalgebra, $\mathfrak z({ \gg})=\mathfrak z({\gg})_{\bar 0} \oplus \mathfrak z({ \gg})_{\bar 1}$, is not trivial. If $\mathfrak z({\gg})_{\bar 1} \neq \{0\}$ then there exists at least an odd element $e \in  ({\mathfrak z}(\gg) \cap \gg_{\bar 1}) \setminus \{0\}$.  Moreover, if we consider a symplectic structure on $\gg$, denoted by $\omega$, and the condition $\omega(e,e)=0$  holds true, then
we can apply the process of $\delta_{\bar 1}$-extension of symplectic Lie superalgebras
as described in \cite{Sofiane}.  Analogously, and for an even central element it can be applied other extensions  of symplectic Lie superalgebras \cite{Sofiane}. 

We will use this framework extensively in this work since, as we will see in the follo\-wing section, all symplectic Lie superalgebras with invertible derivations are always nilpotent.      Further, if we take the quadratic symplectic Lie superalgebras of filiform type, we
will be able to guarantee the existence of such an   $e \in  ({\mathfrak z}(\gg) \cap \gg_{\bar 1} )\setminus \{0\}$  verifying $\omega(e,e)=0$.

\subsection{$\delta_{\bar 1}$-extension  of symplectic Lie superalgebras}
Let $(\gg, \omega)$ be  symplectic Lie superalgebra.
Let $\delta \in {\rm Der}(\gg)_{\bar 1}$ be a derivation such that the map $\Omega : {\mathfrak g} \times {\mathfrak g} \to \mathbb{K}$ defined as
\begin{equation}\label{Def_Omega}
\Omega(X,Y) :=   \omega \bigl((\delta^2 - (\delta^*)^2)(X),Y \bigr),  
\end{equation} $X,Y \in \gg$, is an element of  ${\mathcal B}^2(\gg,\KK)$, i.e., a $2$-coboundary. Since $\omega$ is non-degenerate, there exists $Y_0 \in \gg_{\bar 0}$ such that $\Omega(X,Y) = \omega(Y_0,[X,Y])$, for any $X,Y \in \gg$.

Next, we recall the concept of the $\delta_1$-extension of symplectic Lie superalgebras.

\begin{thm}\cite[Theorem 4.2.1]{Sofiane}\label{Teorema_4.2.1.}
Let $(\gg, \omega)$ be an even symplectic Lie superalgebra. Let  $\delta \in {\rm Der}(\gg)_{\bar 1}$ be a derivation such that Condition \eqref{Def_Omega} is satisfied together with $$\delta^2 = ad_{Y_0} \hspace{0.6cm} {\rm {\it and}} \hspace{0.6cm} \delta(Y_0) = 0.$$ Then, there exists a Lie superalgebra structure on $\widetilde{\gg} := \KK e \oplus \gg \oplus (\KK e)^*$ for $e$ odd, defined for any $X,Y \in \gg$ as
$$[e,\widetilde{\gg}] = 0, \hspace{0.4cm} [X,Y] = [X,Y]_{\gg} + (\omega(\delta(X),Y) + (-1)^x \omega(X,\delta(Y))e,$$
$$[e^*, e^*]  = 2Y_0, \hspace{0.4cm} [e^*, X] = \delta(X) - \omega(X, Y_0)e.$$ There exists a  symplectic form $\widetilde{\omega}$ on $\widetilde{\gg}$ defined as follows: for any $X \in \gg$
$$\widetilde{\omega}|_{\gg\times \gg}  = \omega, \hspace{0.2cm} \widetilde{\omega}(X, e)  = \widetilde{\omega}(X, e^*)  = 0, \hspace{0.2cm} \widetilde{\omega}(e^*,e)  = 1, \hspace{0.2cm} \widetilde{\omega}(e,e)  = \widetilde{\omega}(e^*,e^*)  = 0.$$
The even symplectic Lie superalgebra $\displaystyle (\widetilde{\gg} =\KK e \oplus \gg \oplus \KK e^*,\widetilde{\omega})$ is called an {\it $\delta_{\bar 1}$-extension of} $(\gg, \omega)$ by the $1$-dimensional Lie superalgebra $(\KK e)_{\bar 1}$ (by means of the odd  superderivation $\delta$ and $Y_0$).
\end{thm}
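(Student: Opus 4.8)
The plan is to verify directly that the two prescribed structures—the bracket and the bilinear form $\widetilde\omega$ on $\widetilde{\gg} = \KK e \oplus \gg \oplus \KK e^*$—satisfy the required axioms. This splits into three independent tasks: (1) check that the bracket is a well-defined Lie superalgebra product, i.e. super skew-symmetry and the super Jacobi identity; (2) check that $\widetilde\omega$ is even, skew-supersymmetric, and non-degenerate; and (3) check that $\widetilde\omega$ is a scalar $2$-cocycle. Since $e$ is odd and $e^*$ is even (it pairs with the odd $e$ under the even form $\widetilde\omega$), I would first fix all the $\ZZ_2$-degrees: $e \in \widetilde\gg_{\bar 1}$, $e^* \in \widetilde\gg_{\bar 0}$, $Y_0 \in \gg_{\bar 0}$, and $\delta$ raises degree by $\bar 1$. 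Non-degeneracy of $\widetilde\omega$ is immediate: it restricts to the non-degenerate $\omega$ on $\gg$ and pairs the complementary hyperbolic plane $\KK e \oplus \KK e^*$ perfectly via $\widetilde\omega(e^*,e)=1$; skew-supersymmetry and evenness follow termwise from those of $\omega$.

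First I would dispose of super skew-symmetry of the bracket, which is routine once one notes that the cocycle term $\omega(\delta(X),Y) + (-1)^x\omega(X,\delta(Y))$ is symmetric in $X,Y$ in the graded sense precisely because $\omega$ is skew-supersymmetric and $\delta$ is an odd map—this is the standard compatibility that makes $\delta$ a ``symplectic'' cocycle. The serious work is the super Jacobi identity. The hard part will be the triples that involve $e^*$, since $[e^*,e^*]=2Y_0$ and $[e^*,X]=\delta(X)-\omega(X,Y_0)e$ are the nontrivial new brackets (whereas $e$ is central and so any triple containing $e$ collapses). I expect the main obstacle to be the identity on the triple $(e^*,e^*,X)$ and on $(e^*,X,Y)$: expanding these and using that $\delta$ is a derivation will generate a term $\delta^2(X)$, which must be reconciled with $[Y_0,X]=\mathrm{ad}_{Y_0}(X)$ coming from the $[e^*,e^*]=2Y_0$ bracket. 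This is exactly where the hypothesis $\delta^2 = \mathrm{ad}_{Y_0}$ is consumed; the auxiliary hypothesis $\delta(Y_0)=0$ will be needed to kill the residual term in the triple $(e^*,e^*,e^*)$ and to keep the $e$-components consistent. The defining relation $\Omega(X,Y)=\omega(Y_0,[X,Y])$ together with formula \eqref{Def_Omega} ties $(\delta^*)^2$ into the bookkeeping of the $e$-coefficients appearing from $\omega(X,Y_0)e$.

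Finally, for the $2$-cocycle condition on $\widetilde\omega$ I would again reduce to triples involving $e$ and $e^*$, since on $\gg\times\gg\times\gg$ it reduces to the cocycle condition for $\omega$ once the extra bracket terms (which land in $\KK e$ and are $\widetilde\omega$-orthogonal to $\gg$) are shown to cancel. The only genuinely new verifications are the cocycle identity on triples such as $(e^*,X,Y)$ and $(e^*,e^*,X)$, and here the specific value $\widetilde\omega(e^*,e)=1$ is designed precisely so that the term $-\omega(X,Y_0)e$ in $[e^*,X]$ pairs against $e^*$ to reproduce $\omega(Y_0,[X,Y])=\Omega(X,Y)$, matching the $\delta^2-(\delta^*)^2$ contribution. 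I would organize the whole proof as a finite case analysis over the degrees and over which of $e,e^*$ appear, presenting only the nontrivial triples in detail and remarking that the others are immediate from the centrality of $e$ and the corresponding properties of $(\gg,\omega)$.
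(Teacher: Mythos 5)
First, a remark on the comparison itself: the paper does not prove this statement — it is quoted from \cite[Theorem 4.2.1]{Sofiane} — so there is no in-paper proof to measure your attempt against. Your overall strategy (direct verification of super skew-symmetry, the super Jacobi identity, and the properties of $\widetilde{\omega}$ by a finite case analysis over which of $e$, $e^*$ occur) is the natural and essentially the only way to prove such a statement, and you correctly locate where each hypothesis is consumed: $\delta^2=\mathrm{ad}_{Y_0}$ in the triple $(e^*,e^*,X)$, $\delta(Y_0)=0$ in $(e^*,e^*,e^*)$, and the coboundary condition \eqref{Def_Omega} in the cocycle bookkeeping.

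There is, however, a genuine error at the very first step of your degree assignment: $e^*$ is \emph{odd}, not even. An even bilinear form vanishes on pairs of opposite parity (Definition~\ref{def_22}(v)), so $\widetilde{\omega}(e^*,e)=1$ with $e$ odd forces $e^*$ to be odd as well — your parenthetical justification ("it pairs with the odd $e$ under the even form") gets the convention exactly backwards. Three independent features of the construction confirm this: (a) if $e^*$ were even, super skew-symmetry would give $[e^*,e^*]=-[e^*,e^*]=0$, contradicting $[e^*,e^*]=2Y_0$; (b) $[e^*,X]=\delta(X)-\omega(X,Y_0)e$ has degree $x+\bar 1$ (since $\delta$ and $e$ are odd), which equals the degree of $[e^*,X]$ only if $e^*$ is odd; (c) this is an extension by the superalgebra $(\KK e)_{\bar 1}$, and exactly as in the generalized double extension recalled later in the paper one has $\widetilde{\gg}_{\bar 1}=\KK e\oplus\gg_{\bar 1}\oplus\KK e^*$. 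Since every sign in the super Jacobi and cocycle identities involving $e^*$ depends on its parity, carrying out your case analysis with $e^*$ declared even would fail: the hypotheses $\delta^2=\mathrm{ad}_{Y_0}$ and $\delta(Y_0)=0$ would not cancel the residual terms, because they enter with the wrong signs. With the parity corrected, the rest of your plan goes through. A smaller terminological point: the $e$-coefficient $\omega(\delta(X),Y)+(-1)^x\omega(X,\delta(Y))$ must be \emph{skew}-supersymmetric in $(X,Y)$ (not "symmetric in the graded sense") in order to match $[X,Y]=-(-1)^{xy}[Y,X]$; it is, by precisely the computation you sketch using the skew-supersymmetry of $\omega$ and the oddness of $\delta$.
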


\noindent By \cite[Theorem 4.2.4]{Sofiane}, the converse of Theorem \ref{Teorema_4.2.1.}   is also valid.  That is, let $(\widetilde{\gg},\widetilde{\omega})$ be an symplectic Lie superalgebra. If there exists a non-zero $X \in {\mathfrak z}(\widetilde{\gg})_{\bar 1}$ such that $\widetilde{\omega}(X,X) = 0$ then $(\widetilde{\gg}, \widetilde{\omega})$ is a $\delta_{\bar 1}$-extension of an  symplectic Lie superalgebra $(\gg, \omega)$.

\section{Quadratic Symplectic Lie superalgebras: General case.}

Let us recall the following result (see \cite[Proposition 4.2]{simplecticLSA}). A quadratic Lie superalgebra $(\gg,B)$ is a symplectic Lie superalgebra $(\gg, \omega)$ if and only if there is an even invertible skew-supersymmetric superderivation $\delta$ of $(\gg, B)$ verifying $\omega(X,Y)=B(\delta(X),Y)$ for all $X,Y \in \gg$. 
Moreover, in \cite{simplecticLSA} it was remarked that since the Lie superalgebra admits an invertible even superderivation then ${\mathfrak{g}}_{\bar 0}$ is nilpotent (as it admits an invertible derivation \cite{jacobson}) which leads to the fact that $\gg ={\mathfrak{g}}_{\bar 0}\oplus {\mathfrak{g}}_{\bar 1}$ is solvable \cite{16}.  Furthermore, the following open question was posed:  

\

\noindent {\bf Open question.} \cite {simplecticLSA} Is a quadratic symplectic Lie superalgebra nilpotent? More generally, if a Lie superalgebra $\gg$ admits an even invertible superderivation, is $\gg$ nilpotent?

\

\noindent  The answer to both of these questions is yes, as we shall show next.  

\begin{thm} If a Lie superalgebra $\gg$ admits an even invertible superderivation then $\gg$ is nilpotent.
\end{thm}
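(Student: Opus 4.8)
The plan is to use the two facts already recorded above—that the hypothesis forces $\gg_{\bar 0}$ to be nilpotent and $\gg$ to be solvable—and to upgrade nilpotency of $\gg_{\bar 0}$ to nilpotency of all of $\gg$. First I would extend the ground field to its algebraic closure $\overline{\KK}$: nilpotency of a Lie superalgebra is insensitive to this extension, and an even invertible superderivation $\delta$ stays even and invertible. By the Engel theorem for Lie superalgebras it suffices to show that $\mathrm{ad}(x)$ is nilpotent for every homogeneous $x$. For odd $x$ one has $\mathrm{ad}(x)^2=\tfrac12\,\mathrm{ad}([x,x])$ with $[x,x]\in\gg_{\bar 0}$, so it is enough to treat $x\in\gg_{\bar 0}$; and for such $x$, since $\mathrm{ad}(x)$ preserves the parity decomposition and $\mathrm{ad}(x)|_{\gg_{\bar 0}}$ is already nilpotent (Engel applied to the nilpotent Lie algebra $\gg_{\bar 0}$), everything reduces to proving that the representation $\rho:=\rho_g:\gg_{\bar 0}\to\mathfrak{gl}(\gg_{\bar 1})$ is nilpotent, i.e. that $\rho(w)$ is a nilpotent operator for every $w\in\gg_{\bar 0}$.

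Since $\gg_{\bar 0}$ is nilpotent and we work over $\overline{\KK}$, the module $\gg_{\bar 1}$ splits into generalized weight spaces $\gg_{\bar 1}=\bigoplus_{\chi}V^{\chi}$, where each $\chi\in\gg_{\bar 0}^{*}$ and each $V^{\chi}$ is a submodule. The representation $\rho$ is nilpotent precisely when the only weight is $\chi=0$, so the goal becomes: every weight $\chi$ vanishes. The structural input coming from $\delta$ is the identity obtained by writing out that $\delta$ is an even derivation on mixed brackets, namely $[\delta_{\bar 1},\rho(w)]=\rho(\delta_{\bar 0}w)$ as operators on $\gg_{\bar 1}$, together with the surjectivity of $\delta_{\bar 0}$ on $\gg_{\bar 0}$.

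The heart of the argument—and the step I expect to be the main obstacle—is to show that $\delta_{\bar 1}$ stabilizes each weight space $V^{\chi}$. This cannot be obtained from a naive use of the generalized eigenspace grading of $\delta$ alone: elements of $\gg_{\bar 0}$ that are $\delta$-homogeneous act nilpotently for free, because they shift $\delta$-eigenvalues by a nonzero amount, but cancellation among eigenvalue sums makes this useless for a non-homogeneous $w$, which is exactly why one is forced to prove the weights vanish instead. To get the stabilization, fix $w_0\in\gg_{\bar 0}$ so that the values $\chi(w_0)$, as $\chi$ runs over the finitely many weights, are pairwise distinct; then $V^{\chi}=\ker(\rho(w_0)-\chi(w_0))^{N}$ for $N$ large. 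Writing $A=\rho(w_0)$, $\Delta=\delta_{\bar 1}$ and $B=[\Delta,A]=\rho(\delta_{\bar 0}w_0)$, the operator $B$ preserves $V^{\chi}$ since $V^{\chi}$ is a submodule; expanding $[\Delta,(A-\chi(w_0))^{N}]=\sum_{j}(A-\chi(w_0))^{j}\,B\,(A-\chi(w_0))^{N-1-j}$ and applying it to $v\in V^{\chi}$ shows $(A-\chi(w_0))^{N}\Delta v\in V^{\chi}$, whence $\Delta v\in\ker(A-\chi(w_0))^{2N}=V^{\chi}$.

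Once $\delta_{\bar 1}(V^{\chi})\subseteq V^{\chi}$ is established, I would restrict the relation $[\delta_{\bar 1},\rho(w)]=\rho(\delta_{\bar 0}w)$ to the finite-dimensional space $V^{\chi}$ and take traces: the left-hand side is a commutator, so $\mathrm{tr}\bigl(\rho(\delta_{\bar 0}w)|_{V^{\chi}}\bigr)=0$ for all $w$. Because $\delta_{\bar 0}$ is surjective this yields $\mathrm{tr}\bigl(\rho(u)|_{V^{\chi}}\bigr)=0$ for every $u\in\gg_{\bar 0}$; but on the generalized weight space $\rho(u)$ has the single eigenvalue $\chi(u)$, so $\mathrm{tr}\bigl(\rho(u)|_{V^{\chi}}\bigr)=\chi(u)\dim V^{\chi}$, forcing $\chi=0$. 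Thus the only weight is $0$, $\rho$ is nilpotent, and assembling the reductions of the first paragraph shows $\mathrm{ad}(x)$ nilpotent for all homogeneous $x$; the super Engel theorem then gives that $\gg$ is nilpotent.
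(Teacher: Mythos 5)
Your proof is correct, but it takes a genuinely different route from the paper. The paper's argument is a reduction to the classical setting: it discards the odd--odd bracket and observes that the remaining structure constants define an ordinary ($\mathbb{Z}_2$-graded) Lie algebra $\overline{\gg}=\gg_{\bar 0}\ltimes\gg_{\bar 1}$ (the semidirect product with $\gg_{\bar 1}$ made abelian), on which the even $\delta$ is still an invertible derivation; Jacobson's theorem then makes $\overline{\gg}$ nilpotent, which simultaneously delivers nilpotency of $\gg_{\bar 0}$ and of the representation $\rho$ on $\gg_{\bar 1}$, and the cited criterion of Rodr\'{\i}guez-Vallarte--Salgado--S\'anchez-Valenzuela finishes. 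You instead take the nilpotency of $\gg_{\bar 0}$ as given and prove directly that $\rho$ is nilpotent, via the generalized weight space decomposition of $\gg_{\bar 1}$, the commutator identity $[\delta_{\bar 1},\rho(w)]=\rho(\delta_{\bar 0}w)$, the stabilization of each $V^{\chi}$ by $\delta_{\bar 1}$ (your $2N$-kernel computation is the right way to get this), and a trace argument exploiting the surjectivity of $\delta_{\bar 0}$ to kill every weight. In effect you re-derive by hand exactly the consequence of Jacobson's theorem that the paper extracts from $\overline{\gg}$; what your version buys is self-containedness (no auxiliary algebra to verify, no appeal to Jacobson beyond the even part), at the cost of length and of needing the weight-space machinery over an algebraically closed field. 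Both arguments converge on the same final step, an Engel-type criterion for Lie superalgebras, and both are sound; the only small caveats in yours are that you should say explicitly that the field extension and the choice of $w_0$ separating the finitely many weights are legitimate (characteristic zero, hence infinite field) and that $\dim V^{\chi}>0$ is what lets you divide in the trace identity -- but these are routine.
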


\begin{proof} Let $\gg ={\mathfrak{g}}_{\bar 0}\oplus {\mathfrak{g}}_{\bar 1}$ be a Lie superalgebra and $\delta$ an even invertible superderivation of $\gg$. There is no loss of generality in supossing we have an homogeneous basis $\{ X_1, \dots, X_n ,Y_1,\dots, Y_m \}$, where $\{ X_1, \dots, X_n \} $ is a basis of 
${\mathfrak{g}}_{\bar 0}$ and $\{Y_1,\dots, Y_m \}$ is a basis of ${\mathfrak{g}}_{\bar 1}$. Then, the law of $\gg$ will be determined, in particular, by three sets of bracket products, two skew-symmetrical an one symmetrical, i.e. 

$$\gg:\left\{\begin{array}{ll}
[X_i,X_j]=\sum_{k=1}^{n}C_{ij}^{k}X_k, & C_{ij}^{k}=-C_{ji}^{k} \\{}
[X_i,Y_j]=\sum_{k=1}^{m}E_{ij}^{k}Y_k, & E_{ij}^{k}=-E_{ji}^{k} \\{}
[Y_i,Y_j]=\sum_{k=1}^{n}F_{ij}^{k}X_k, & F_{ij}^{k}=F_{ji}^{k} 
 \end{array}\right.$$
verifying all of them the super Jacobi identity. 

Let us consider now the Lie algebra $\overline{\gg}$  considering the
Lie superalgebra $\gg$ by forgetting the super structure. So, let $\{ \overline{X}_1, \dots, \overline{X}_n,\overline{Y}_1,\dots, \overline{Y}_m \}$ be  a basis of $\overline{\gg}$ and with  the following multiplication table
$$\gg:\left\{\begin{array}{ll}
[\overline{X}_i,\overline{X}_j]=\sum_{k=1}^{n}C_{ij}^{k}\overline{X}_k, & \\{}
[\overline{X}_i,\overline{Y}_j]=\sum_{k=1}^{m}E_{ij}^{k}\overline{Y}_k, &  
\end{array}\right.$$
with $C_{ij}^{k}$ and $E_{ij}^{k}$ as above. It can be easily checked that $\overline{\gg}={\overline{\gg}}_{\bar 0}\oplus {\overline{\gg}}_{\bar 1}$ is in fact a Lie algebra, in particular a $\ZZ_2$-graded Lie algebra. By defining $$\overline{\delta}(\overline{X}_i):=\delta(X_i), \ 1 \leq i \leq n \mbox{ and } \overline{\delta}(\overline{Y}_j):=\delta(Y_j), \ 1 \leq j \leq m$$
it can be seen that $\overline{\delta}$ is an invertible derivation of the Lie algebra $\overline{\gg}$ and then $\overline{\gg}$ is a nilpotent Lie algebra  (see \cite{jacobson}).  Therefore, ${\overline{\gg}}_{\bar 0}$ is nilpotent and also the adjoint representation of ${\overline{\gg}}_{\bar 0}$ over ${\overline{\gg}}_{\bar 1}$, that is,  if  we  denote by $\rho_{\overline{\gg}}$ the representation of Lie algebra ${\overline{\gg}}_{\bar 0}$ on ${\overline{\gg}}_{\bar 1}$ defined by: $$\rho_{\overline{\gg}}(\overline{X})(\overline{Y}):= [\overline{X},\overline{Y}], \ \mbox{ for any } \overline{X}\in {\overline{\gg}}_{\bar 0} \mbox{ and } \overline{Y}\in {\overline{\gg}}_{\bar 1}$$
then for every $\overline{X}\in {\overline{\gg}}_{\bar 0}$, $\rho_{\overline{\gg}}(\overline{X}) \in {\mathfrak g}{\mathfrak l}({\overline{\gg}}_{\bar 1})$ is nilpotent. Let us remark that the structure constants $C_{ij}^{k}$ determine the structure of a nilpotent Lie algebra on ${\overline{\gg}}_{\bar 0}$ and $E_{ij}^{k}$ make the representation $\rho_{\overline{\gg}}$ nilpotent. Consequently, ${\gg}_{\bar 0}$ is nilpotent and also the corresponding adjoint representation of ${\gg}_{\bar 0}$ over ${\gg}_{\bar 1}$, which leads to the fact that the Lie superalgebra $\gg ={\mathfrak{g}}_{\bar 0}\oplus {\mathfrak{g}}_{\bar 1}$ is nilpotent \cite{Salgado}.
\end{proof}

\begin{rem}
We can generalize the previous result to odd superderivations. Indeed, if $\delta$ is an odd invertible superderivation of $\gg$, then $\delta^2 = \frac{1}{2}[\delta,\delta]$ is an even invertible superderivation of ${\mathfrak g}$, so ${\mathfrak g}$ is nilpotent. 
\end{rem}

\begin{cor} \label{cor} Any quadratic symplectic Lie superalgebra is nilpotent.
\end{cor}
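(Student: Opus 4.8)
The plan is to deduce Corollary \ref{cor} directly from the preceding Theorem together with the structural characterization of quadratic symplectic Lie superalgebras recalled at the start of Section~4. The key observation is that the symplectic hypothesis hands us, for free, exactly the kind of superderivation the Theorem requires. Specifically, by \cite[Proposition 4.2]{simplecticLSA} (recalled immediately before the Theorem), a quadratic Lie superalgebra $(\gg,B)$ carrying a symplectic structure $\omega$ is precisely one for which there exists an even \emph{invertible} skew-supersymmetric superderivation $\delta$ of $(\gg,B)$ satisfying $\omega(X,Y)=B(\delta(X),Y)$ for all $X,Y\in\gg$.

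The proof then fits in a single line. Given a quadratic symplectic Lie superalgebra $(\gg,B,\omega)$, the cited characterization produces an even invertible superderivation $\delta$ of $\gg$. The Theorem asserts that any Lie superalgebra admitting an even invertible superderivation is nilpotent; applying it to this $\delta$ yields that $\gg$ is nilpotent.

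The only point requiring a moment's care is checking that the hypotheses genuinely match: the Theorem demands an \emph{even invertible superderivation}, and the object $\delta$ supplied by the symplectic structure is even, invertible, and a superderivation (the additional skew-supersymmetry property is extra information we simply do not need to invoke). Since all three required adjectives are present, the application is immediate and no further computation is needed. I do not anticipate any genuine obstacle here — the entire content has been front-loaded into the Theorem and the structural proposition from \cite{simplecticLSA}, so this corollary is essentially a bookkeeping consequence. The one conceptual subtlety worth flagging, had the earlier results not already been established, would be the implication from ``$\gg_{\bar 0}$ nilpotent and $\rho_g$ nilpotent'' to ``$\gg$ nilpotent'' (via \cite{Salgado}); but that implication is already carried out inside the proof of the Theorem, so we may invoke the Theorem as a black box.
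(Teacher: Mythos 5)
Your proposal is correct and is exactly the argument the paper intends: the corollary is stated without proof immediately after the theorem precisely because the characterization from \cite[Proposition 4.2]{simplecticLSA} supplies an even invertible (skew-supersymmetric) superderivation, and the theorem then gives nilpotency. No gaps; the hypotheses match as you checked.
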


Next, and thanks to the previous theorem, we explore classifications in low dimension of quadratic Lie superalgebras in order to obtain the corresponding classification of symplectic quadratic Lie superalgebras, see \cite{singularquadratic, classificationLow}.

Let us recall first the concept of i-isomorphism for  Lie superalgebras \cite{classificationLow}.  Thus, we say that two quadratic Lie  superalgebras  $(\gg,B)$ and $({\mathfrak g}',B')$ are {\it isometrically isomorphic} (or {\it i-isomorphic}, for short) if there exists a Lie  superalgebra  isomorphism $A : \gg \to \gg '$ satisfying $B'(A(X), A(Y)) = B(X,Y)$ for all $X$, $Y \in \gg$. In this case, $A$ is called an {\it i-isomorphism}. Therefore, we have the following results.

 We classify some quadratic Lie superalgebras over the base field $\mathbb{C}$. 

\begin{prop} \label{prop3.1} Let $(\gg,B)$ be  a non-abelian quadratic  Lie superalgebra of dimension $4$. We have: \begin{itemize} 

\item[(i)] If $\dim ({\mathfrak{g}}_{\bar 1})=0$ then there is no quadratic symplectic Lie superalgebra.  

\item[(ii)] If $\dim ({\mathfrak{g}}_{\bar 0})=\dim ({\mathfrak{g}}_{\bar 1})=2$ then there exists only one quadratic symplectic Lie superalgebra, up to i-isomorphism, named $\gg_{4,1}^s = {\gg}_{\bar 0} \oplus {\gg}_{\bar 1}$ where ${\mathfrak{g}}_{\bar 0}=span_{\CC}\{X_0,X_1\}$ and ${\mathfrak{g}}_{\bar 1}=span_{\CC}\{Y_1,Y_2\}$ such that the non-zero bilinear form $B(X_0,X_1)=B(Y_2,Y_1)=1$ and the non-trivial Lie super-brackets:
$$\gg_{4,1}^s:\left\{\begin{array}{l}
[X_1,Y_1]=-[Y_1,X_1]=-2Y_2 \\{}
[Y_1,Y_1]=-2X_0
 \end{array}\right.$$

Moreover,  any symplectic  structure $\omega$ on $\gg_{4,1}^s $ is given by
$$\left\{\begin{array}{l}
\omega(X_0,X_1)=2b_1\\ \omega(Y_2,Y_1)=-b_1\\ \omega(Y_1,Y_1)=b_2
\end{array}\right.$$
where $b_1, b_2$ are parameters  in $ \CC,$ with $ b_1 \neq 0$.
\end{itemize} 
\end{prop}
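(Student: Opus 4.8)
The overall strategy is to combine the evenness and invariance of $B$ with Corollary \ref{cor}. Since $B$ is even and non-degenerate, its restriction $B|_{\gg_{\bar 0}\times\gg_{\bar 0}}$ is a non-degenerate invariant symmetric form, so $\gg_{\bar 0}$ is a quadratic Lie algebra, while $B|_{\gg_{\bar 1}\times\gg_{\bar 1}}$ is a non-degenerate skew-symmetric form. For part (i), $\gg=\gg_{\bar 0}$ is a $4$-dimensional non-abelian quadratic Lie algebra, and by Corollary \ref{cor} it would have to be nilpotent in order to be symplectic; I would rule this out by showing no such algebra is nilpotent. Indeed, for a quadratic Lie algebra $\mathfrak z(\gg)=[\gg,\gg]^{\perp}$, whence $\dim[\gg,\gg]+\dim\mathfrak z(\gg)=4$, and a short case analysis on $\dim[\gg,\gg]\in\{1,2,3\}$ (using that in a nilpotent algebra $[\gg,\gg]$ is the image of $\Lambda^2(\gg/\mathfrak z(\gg))$ and that a one-dimensional abelianization forces commutativity) forces $\gg$ to be abelian. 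Equivalently, one may quote the low-dimensional classification of \cite{classificationLow}, by which the only candidates are $\mathfrak{sl}_2(\CC)\oplus\CC$ and the diamond (oscillator) algebra, both non-nilpotent.

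For part (ii), write $\gg_{\bar 0}=\mathrm{span}\{X_0,X_1\}$ and $\gg_{\bar 1}=\mathrm{span}\{Y_1,Y_2\}$. As a $2$-dimensional quadratic Lie algebra, $\gg_{\bar 0}$ is abelian. Invariance of $B$, together with the skew-symmetry of $B|_{\gg_{\bar 1}\times\gg_{\bar 1}}$, shows that each operator $\rho_g(X)$ is skew with respect to $B|_{\gg_{\bar 1}\times\gg_{\bar 1}}$, so $\rho_g(\gg_{\bar 0})\subseteq\mathfrak{sp}(\gg_{\bar 1})\cong\mathfrak{sl}_2(\KK)$, and this image is abelian. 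By Corollary \ref{cor} the superalgebra is nilpotent, hence $\rho_g(X_0)$ and $\rho_g(X_1)$ are commuting nilpotent elements of $\mathfrak{sl}_2(\KK)$ and therefore proportional, spanning at most a line. A trivial action would, by invariance and non-degeneracy of $B|_{\gg_{\bar 0}\times\gg_{\bar 0}}$, force the symmetric pairing $S^2\gg_{\bar 1}\to\gg_{\bar 0}$ to vanish and $\gg$ to be abelian; so the image is exactly one-dimensional. I would then choose $X_0$ spanning $\ker\rho_g$ (thus central) and $X_1$ with $N:=\rho_g(X_1)\neq 0$, $N^2=0$, together with a basis $Y_1,Y_2$ of $\gg_{\bar 1}$ such that $NY_1=cY_2$, $NY_2=0$, $c\neq 0$.

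Next I would pin down the remaining brackets. Writing $[Y_i,Y_j]=\mu_{ij}X_0+\nu_{ij}X_1$, the invariance relations $B([X_0,Y_i],Y_j)=B(X_0,[Y_i,Y_j])$ and $B([X_1,Y_i],Y_j)=B(X_1,[Y_i,Y_j])$ form, for each $(i,j)$, a linear system whose matrix is the non-degenerate Gram matrix of $B|_{\gg_{\bar 0}\times\gg_{\bar 0}}$ and whose right-hand side is controlled by $B(NY_i,Y_j)$; this vanishes except for $(i,j)=(1,1)$, so $[Y_1,Y_2]=[Y_2,Y_2]=0$. The decisive input is the fully odd super Jacobi identity on $(Y_1,Y_1,Y_1)$, which reads $[Y_1,[Y_1,Y_1]]=0$; since $[Y_1,X_0]=0$ and $[Y_1,X_1]=-NY_1=-cY_2\neq 0$, this forces $\nu_{11}=0$. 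Substituting $\nu_{11}=0$ back into the $(1,1)$ system yields $B(X_0,X_0)=0$ (so $B(X_0,X_1)\neq 0$ by non-degeneracy) and $[Y_1,Y_1]=\mu_{11}X_0$ with $\mu_{11}=-c\,B(Y_1,Y_2)/B(X_0,X_1)\neq 0$. A translation $X_1\mapsto X_1-\tfrac{B(X_1,X_1)}{2B(X_0,X_1)}X_0$ (which leaves $N$ unchanged) normalizes $B(X_1,X_1)=0$, and diagonal rescalings of $X_0,X_1,Y_1,Y_2$ send the four scalars $c,\mu_{11},B(X_0,X_1),B(Y_1,Y_2)$ to the values defining $\gg_{4,1}^s$; the relation $\mu_{11}B(X_0,X_1)=-c\,B(Y_1,Y_2)$ is exactly the compatibility condition making these rescalings simultaneously realizable, giving uniqueness up to i-isomorphism.

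Finally, to describe all symplectic structures on $\gg_{4,1}^s$, I would take a general even skew-supersymmetric bilinear form: skew on $\gg_{\bar 0}$, so $\omega(X_0,X_1)=a$, and symmetric on $\gg_{\bar 1}$, so $\omega(Y_1,Y_1)=p$, $\omega(Y_1,Y_2)=r$, $\omega(Y_2,Y_2)=q$. Imposing the scalar $2$-cocycle condition on basis triples, the triple $(X_1,Y_1,Y_1)$ gives $a=-2r$, the triple $(X_1,Y_1,Y_2)$ gives $q=0$, and all other triples are satisfied automatically; non-degeneracy then amounts to $a\neq 0$, equivalently $r\neq 0$, and leaves $p$ free. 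Setting $b_1=-r$ and $b_2=p$ reproduces the stated formulas with $b_1\neq 0$. I expect the main obstacle to lie in the uniqueness claim of (ii): one must carry out the normalization through changes of basis that preserve $B$ (i-isomorphisms), not merely through abstract Lie-superalgebra isomorphisms, and verify that the single compatibility relation among the structure constants is precisely what allows all four constants to be scaled to their target values at once.
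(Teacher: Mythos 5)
Your proposal is correct, but it reaches the classification by a genuinely different route than the paper. For (i) the paper simply quotes the fact (from \cite{simplecticLA}) that a nilpotent quadratic Lie algebra of dimension at most $4$ is abelian, which is exactly what your case analysis based on $\dim[\gg,\gg]+\dim\mathfrak{z}(\gg)=4$ re-proves; either is fine. For (ii) the paper does not derive the candidate from scratch: it invokes the known list of quadratic Lie superalgebras with $\dim\gg_{\bar 0}=\dim\gg_{\bar 1}=2$ from \cite{singularquadratic} and \cite{classificationLow} (namely $\gg_{4,1}^s$ and $\gg_{4,2}^s$), discards $\gg_{4,2}^s$ because it is not nilpotent (Corollary \ref{cor}), and then parametrizes the symplectic structures on $\gg_{4,1}^s$ by computing all even invertible skew-supersymmetric superderivations $\delta$ of $(\gg_{4,1}^s,B)$ and setting $\omega(X,Y)=B(\delta(X),Y)$, via \cite[Proposition 4.2]{simplecticLSA}. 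You instead build the unique candidate directly: $\gg_{\bar 0}$ abelian, the action landing in $\mathfrak{sp}(\gg_{\bar 1},B)\cong\mathfrak{sl}_2$, nilpotency forcing the image to be a line of square-zero operators, invariance determining $[\gg_{\bar 1},\gg_{\bar 1}]$ through the non-degenerate Gram matrix of $B|_{\gg_{\bar 0}\times\gg_{\bar 0}}$, and the odd Jacobi identity killing the $X_1$-component of $[Y_1,Y_1]$; you then obtain the symplectic forms from the $2$-cocycle condition rather than through $\delta$ (I checked the cocycle computations on $(X_1,Y_1,Y_1)$ and $(X_1,Y_1,Y_2)$; they give $a=-2r$ and $q=0$ as you claim, and the resulting family matches the statement under $b_1=-r$, $b_2=p$). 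The one step that genuinely needs care is the normalization up to i-isomorphism: the rescalings must carry $B$, not just the brackets, to the normal form, and your observation that the single relation $\mu_{11}B(X_0,X_1)=-c\,B(Y_1,Y_2)$ is preserved by all rescalings and is precisely the compatibility making the four target values simultaneously attainable is the right check. What your approach buys is self-containedness and independence from the external classification; what the paper's buys is brevity and the explicit derivation $\delta$, which is reused later (e.g.\ in Proposition \ref{prop3.3} and the double-extension constructions).
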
 

\begin{proof}
$(i)$ If $\dim ({\mathfrak{g}}_{\bar 1})=0$, then we have the Lie algebra case, and it is a well-known fact that up to dimension 4, a nilpotent quadratic Lie algebra must be abelian \cite{simplecticLA}. Therefore, there is no non-abelian quadratic  symplectic Lie superalgebra.

\

$(ii)$ If $\dim ({\mathfrak{g}}_{\bar 0})=\dim ({\mathfrak{g}}_{\bar 1})=2$, according to \cite{singularquadratic} and also \cite[Subsetion $2.1.$, page $129$]{classificationLow}, there are, up to i-isomorphism, two quadratic Lie superalgebras: $\gg_{4,1}^s$ (described in the statement of the proposition) and $\gg_{4,2}^s$. The latter is expressed by $\gg_{4,2}^s={\mathfrak{g}}_{\bar 0} \oplus {\mathfrak{g}}_{\bar 1}$ where ${\mathfrak{g}}_{\bar 0}=span_{\CC}\{X_0,X_1\}$ and ${\mathfrak{g}}_{\bar 1}=span_{\CC}\{Y_1,Y_2\}$ such that the non-zero bilinear form $B(X_0,X_1)=B(Y_2,Y_1)=1$ and the non-trivial Lie super-brackets:
$$\gg_{4,2}^s:\left\{\begin{array}{l}
 [X_1,Y_1]=-[Y_1,X_1]=-Y_1 \\{}
[X_1,Y_2]=-[Y_2,X_1]=Y_2 \\{}
[Y_2,Y_1]=[Y_1,Y_2]=X_0
 \end{array}\right. $$
 
Let us note that $\gg_{4,2}^s$ after applying the ismorphism given by 
$$X_0=b, \ X_1=a, \ Y_1=\beta, \ Y_2=\alpha$$
is exactly the Lie superalgebra: $[a,\alpha]=\alpha$, $[a,\beta]=-\beta$ and $[\alpha,\beta]=b$, which is noted by $(C^2_{-1}+A)$, Jordan-Wigner quantization, in Backhouse's classification list \cite{Backhouse}. It can be noted also that $\gg_{4,1}^s$ and the Lie superalgebra: $[a,\beta]=\alpha$, $[\beta,\beta]=b$, which is noted by $(C^3+A)$ in Backhouse's  list are isomorphic by means of the isomorphism that follows: 
$$a=X_1, \ b=-2X_0, \ \alpha=-2Y_2, \ \beta=Y_1$$

 On the other hand, it  can be easily checked that $\gg_{4,2}^s$ is not nilpotent and therefore it does not admit a symplectic structure. Thus, the only superalgebra candidate to be symplectic is  $\gg_{4,1}^s$. Since a necessary and sufficient condition for $\gg_{4,1}^s$ to be a symplectic Lie superalgebra $(\gg_{4,1}^s, \omega)$ is the existance of an even invertible skew-supersymmetric superderivation $\delta$ of $(\gg_{4,1}^s, B)$ verifying: $\omega(X,Y)=B(\delta(X),Y)$ for all $X$, $Y$, we look for such $\delta$. First, we set the images of the generator vectors:
$$\delta(X_1)=a_0X_0+a_1X_1, \quad \delta(Y_1)=b_1Y_1+b_2Y_2$$
From the condition of being an even superderivation we get

$$\delta([Y_1,Y_1])=2[Y_1,\delta(Y_1)] \Longleftrightarrow \delta(X_0)=2b_1X_0$$
$$\delta([X_1,Y_1])=[\delta(X_1),Y_1]+[X_1,\delta(Y_1)] \Longleftrightarrow \delta(Y_2)=(a_1+b_1)Y_2$$
Next, from the condition to be skew-supersymmetrical we have 
$$B(\delta(X_0),X_1)=-B(X_0,\delta(X_1)) \Longrightarrow a_1=-2b_1$$
$$B(\delta(X_1),X_1)=-B(X_1,\delta(X_1))\Longrightarrow a_0=0.$$
Thus, any even invertible skew-supersymmetric superderivation $\delta$ of $(\gg_{4,1}^s, B)$ verifies:
$$\delta(X_1)=-2b_1X_1, \ \delta(Y_1)=b_1Y_1+b_2Y_2, \ \delta(X_0)=2b_1X_0, \ \delta(Y_2)=-b_1Y_2, \ b_1 \neq 0.$$
After defining $\omega(X,Y):=B(\delta(X),Y)$ for all $X,Y$ we obtain the expression for $\omega$ of the statement, which concludes the proof.
\end{proof}

\begin{prop} \label{prop3.2} Let $(\gg,B)$ be a complex   quadratic Lie superalgebra of dimension $6$ \cite{classificationLow}. We have: \begin{itemize} 

\item[(i)] If $\dim ({\mathfrak{g}}_{\bar 1})=2$, then there is no quadratic symplectic Lie superalgebra. 

\item[(ii)] If $\dim ({\mathfrak g}_{\bar 1})=4$, then there exists only one quadratic symplectic Lie superalgebra, up to i-isomorphism, named $\gg_{6,4}^s = {\mathfrak g}_{\bar 0} \oplus {\mathfrak{g}}_{\bar 1}$, where ${\mathfrak g}_{\bar 0} = span_{\CC}\{X_0,X_1\}$ and ${\mathfrak g}_{\bar 1} = span_{\CC}\{Y_1,Y_2,Y_3,Y_4\}$ such that the non-zero bilinear form $B(X_0,X_1)=B(Y_4,Y_1)=B(Y_3,Y_2)=1$ and the non-trivial Lie super-brackets:
$$\gg_{6,4}^s:\left\{\begin{array}{l}
[X_1,Y_1]=-[Y_1,X_1]=-Y_2 \\{}
[X_1,Y_3]=-[Y_3,X_1]=Y_4 \\{}
[Y_3,Y_1]=[Y_1,Y_3]=X_0
\end{array}\right.$$
Once fixed $B$ above,  any  symplectic   structure $\omega$  on $\gg_{6,4}^s$ is given by:
$$\left\{\begin{array}{l}
\omega(X_0,X_1)=b_1+c_3\\ \omega(Y_4,Y_1)=-b_1\\ \omega(Y_3,Y_2)=c_3\\
\omega(Y_1,Y_1)=b_4\\ \omega(Y_1,Y_3)=-b_2\\ \omega(Y_3,Y_3)=-c_2\\
\end{array}\right.$$
\noindent where $b_1, b_2, b_4, c_2, c_3 $  are parameters  in $ \CC,$  with $b_1,c_3 \neq 0$ and $b_1 \neq -c_3$.
\end{itemize} 
\end{prop}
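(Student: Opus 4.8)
The plan is to follow the template of the proof of Proposition~\ref{prop3.1}, exploiting two ingredients. By Corollary~\ref{cor} every quadratic symplectic Lie superalgebra is nilpotent, so any non-nilpotent algebra in the classification is discarded at once. By the criterion recalled at the opening of this section (\cite[Proposition~4.2]{simplecticLSA}), a quadratic Lie superalgebra $(\gg,B)$ admits a symplectic structure precisely when it possesses an even invertible skew-supersymmetric superderivation $\delta$, and then $\omega(X,Y)=B(\delta(X),Y)$; moreover, once $B$ is fixed, the symplectic forms on $\gg$ are in bijection with such invertible $\delta$. Hence the whole argument reduces to running through the classification of $6$-dimensional complex quadratic Lie superalgebras of \cite{classificationLow} and, for each algebra with the prescribed $\dim(\gg_{\bar 1})$, determining the space of even skew-supersymmetric superderivations and testing it for invertible elements.

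For part (i), where $\dim(\gg_{\bar 1})=2$ and $\dim(\gg_{\bar 0})=4$, I would first observe that since $B$ is even the restriction $B|_{\gg_{\bar 0}}$ is non-degenerate, so nilpotency of $\gg$ makes $(\gg_{\bar 0},B|_{\gg_{\bar 0}})$ a nilpotent quadratic Lie algebra of dimension $4$, hence abelian (as in Proposition~\ref{prop3.1}(i)). Going through the finitely many algebras in the list with this grading, I would eliminate the non-nilpotent ones by Corollary~\ref{cor}, and for each surviving nilpotent one write a general even superderivation $\delta$ (necessarily preserving $\gg_{\bar 0}$ and $\gg_{\bar 1}$), impose the superderivation identities on the defining brackets together with the skew-supersymmetry relation $B(\delta(X),Y)=-B(X,\delta(Y))$, and check that every resulting $\delta$ is singular. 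This establishes the non-existence statement.

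Part (ii), with $\dim(\gg_{\bar 1})=4$ and $\dim(\gg_{\bar 0})=2$, is the substantive case. After removing the non-nilpotent algebras via Corollary~\ref{cor}, I would compute the even skew-supersymmetric superderivations of each remaining candidate and verify that an invertible one occurs only for $\gg_{6,4}^s$. For $\gg_{6,4}^s$ itself, I would take $\delta(X_1)$, $\delta(Y_1)$, $\delta(Y_3)$ as undetermined even images, propagate them through the brackets $[X_1,Y_1]=-Y_2$, $[X_1,Y_3]=Y_4$ and $[Y_3,Y_1]=X_0$ via the superderivation rule to express $\delta(X_0)$, $\delta(Y_2)$, $\delta(Y_4)$, and then impose skew-supersymmetry against $B$ to cut the parameters down. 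Setting $\omega:=B(\delta(\cdot),\cdot)$ then yields exactly the displayed five-parameter family $b_1,b_2,b_4,c_2,c_3$.

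It remains to translate invertibility of $\delta$, equivalently non-degeneracy of $\omega$, into the stated inequalities: on $\gg_{\bar 0}$ one has $\omega(X_0,X_1)=b_1+c_3$, so non-degeneracy there forces $b_1\neq -c_3$, while the Gram matrix of $\omega|_{\gg_{\bar 1}}$ in the basis $\{Y_1,Y_2,Y_3,Y_4\}$ has determinant $b_1^2c_3^2$, giving $b_1\neq 0$ and $c_3\neq 0$. As in Proposition~\ref{prop3.1}, the difficulty is organizational rather than conceptual: the main obstacle is carrying out the superderivation-plus-skew-supersymmetry linear algebra uniformly across all nilpotent algebras in the dimension-$6$ list and confirming that invertibility fails in every case except $\gg_{6,4}^s$.
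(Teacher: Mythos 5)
Your proposal is correct and follows essentially the same route as the paper: discard the non-nilpotent algebras in the dimension-$6$ classification via Corollary~\ref{cor} (in case (i) the even part is the non-nilpotent diamond algebra, so nothing survives), then for the sole remaining candidate $\gg_{6,4}^s$ compute the even skew-supersymmetric superderivations by propagating $\delta(X_1),\delta(Y_1),\delta(Y_3)$ through the brackets and imposing skew-supersymmetry, and set $\omega=B(\delta(\cdot),\cdot)$. Your explicit justification of the constraints $b_1,c_3\neq 0$ and $b_1\neq -c_3$ via the Gram determinant $b_1^2c_3^2$ on $\gg_{\bar 1}$ and $\omega(X_0,X_1)=b_1+c_3$ on $\gg_{\bar 0}$ agrees with the invertibility of the triangular matrix of $\delta$ recorded in the paper.
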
 

\begin{proof} (i) If $\dim ({\mathfrak{g}}_{\bar 1})=2$, then ${\mathfrak{g}}_{\bar 0}$ is i-isomorphic to the diamond Lie algebra  $\gg_4$, see \cite{classificationLow},  $\gg_4=span_{\CC} \{ X,P,Q,Z\}$ such that $B(X,Z)=B(P,Q)=1$ and $[X,P]=P$, $[X,Q]=-Q$, $[P,Q]=Z$. It can be checked that $\gg_4$ is a solvable non-nilpotent Lie algebra and therefore, there is no indecomposable quadratic symplectic Lie superalgebra. 

\

(ii) If $\dim ({\mathfrak{g}}_{\bar 1})=4$, according to  \cite[Subsetion 2.3.2., page 132]{classificationLow} , there are, up to i-isomorphism, the following quadratic Lie superalgebras: $\gg_{6,4}^s$ (described in the statement of the proposition) together with $\gg_{6,5}^s$, $\gg_{6,6}^s(\lambda)$ and $\gg_{6,7}^s$. With respect to the basis $\{X_0, Y_0\}$ for the even part and $\{X_1,X_2,Y_1,Y_2\}$ for the odd one, they can be  expressed by :
 
$$\begin{array}{ll}
 \gg_{6,5}^s: & [Y_0,X_2] = X_2, [Y_0,Y_1] = X_1, [Y_0,Y_2]=-Y_2,  [Y_1,Y_1]=[X_2,Y_2]=X_0 \\ 
 
\\
 
 \gg_{6,6}^s(\lambda): & [Y_0,X_1]=X_1, [Y_0,X_2]=\lambda X_2, [Y_0,Y_1]=-Y_1, [Y_0,Y_2]=-\lambda Y_2,  \\{} & [X_1,Y_1]=X_0, [X_2,Y_2]=\lambda X_0\\
 
\\

  \gg_{6,7}^s: & [Y_0,X_1]=X_1, [Y_0,X_2]= X_2+X_1, [Y_0,Y_1]=-Y_1-Y_2, [Y_0,Y_2]=- Y_1,  \\{} & [X_1,Y_1]= [X_2,Y_1]=[X_2,Y_2]= X_0
 \end{array}$$
 
It can be easily checked that none of the solvable quadratic superalgebras $\gg_{6,5}^s$, $\gg_{6,6}^s(\lambda)$ and $\gg_{6,7}^s$ is nilpotent and therefore they do not admit a symplectic structure.  Thus, only $\gg^s_{6,4}$ is a candidate for being a symplectic Lie superalgebra.   Accordingly, we consider the fixed $B$ to be the same as in \cite{classificationLow}.  Since a necessary and sufficient condition for $\gg_{6,4}^s$ to be a symplectic Lie superalgebra $(\gg_{6,4}^s, \omega)$ is the existence of an even invertible skew-supersymmetric superderivation $\delta$ of $(\gg_{6,4}^s, B)$ verifying: $\omega(X,Y) = B(\delta(X),Y)$, we look for such $\delta$, analogously as it was done in the previous proposition.

First, we set the images of the generator vectors:
$\delta(X_1)=a_0X_0+a_1X_1,$ and 
$$\delta(Y_1)=b_1Y_1+b_2Y_2+b_3Y_3+b_4Y_4, \hspace{0.4cm} \delta(Y_3)=c_1Y_1+c_2Y_2+c_3Y_3+c_4Y_4.$$
From the condition of being an even superderivation we get
$$\delta(X_0)=(b_1+c_3)X_0, \  \delta(Y_2)=(a_1+b_1)Y_2, \ \delta(Y_4)=(a_1+c_3)Y_4, \ b_3=c_1=0.$$
Next, from the condition to be skew-supersymmetrical we have 
$$ a_1=-b_1-c_3, \quad a_0=0, \quad c_4=-b_2.$$
Thus, any even invertible skew-supersymmetric superderivation $\delta$ of $(\gg_{6,4}^s, B)$ is defined by the following matrix:
$$\left(  \begin{array}{cccccc}
b_1+c_3 & 0 &0 &0&0&0 \\
0& -b_1-c_3&0 &0&0&0 \\
0&0&b_1&0 &0&0 \\
0&0&b_2&-c_3 &c_2&0 \\
0&0&0&0 &c_3&0 \\
0&0&b_4&0 &-b_2&-b_1 \\
\end{array} \right)$$
with $b_1,  c_3 \neq 0, \ b_1 \neq -c_3$.

Then, after defining $\omega(X,Y):=B(\delta(X),Y)$ for all $X$, $Y$, we obtain the expression for $\omega$ of the statement, which concludes the proof. 
\end{proof}

\begin{rem} Let us remark that the symplectic quadratic Lie superalgebras $\gg_{4,1}^s$ and $\gg_{6,4}^s$ were mentioned  in \cite{JPAA(2009-ELS)} as examples of quadratic Lie superalgebras with a reductive even part and the action of the even part on  the odd part is not  completely reducible.
\end{rem}

\subsection{Double extensions of quadratic symplectic  Lie superalgebras}

Next,  we start with  double extensions of quadratic symplectic Lie superalgebras in order to obtain the conditions under which we get another quadratic symplectic Lie superalgebra as a result of the double extension. For that purpose we start with double extensions  by one-dimensional Lie algebras.  

\begin{lem}  \label{thm3}
Let $(\aa, B, \omega)$ be a finite-dimensional quadratic symplectic Lie superalgebra. Let $(\gg, \widetilde{B})$ be a double extension of the quadratic Lie superalgebra $(\aa, B)$ by the $1$-dimensional Lie algebra $(\KK e)_{\bar 0}$ by means of $\psi : \KK e \to {\rm Der}_a (\aa, B),$ defined by $\psi(e):=D,$ and $D$ an even skew-supersymmetric superderivation of $(\aa, B)$. If $(\gg, \widetilde{B})$ admits a symplectic structure  then $D$ is nilpotent.  
\end{lem}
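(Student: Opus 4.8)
Here's my analysis of how to prove this lemma.

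The key strategic observation is that the entire paper hinges on Corollary \ref{cor}: any quadratic symplectic Lie superalgebra is nilpotent. Since the hypothesis gives us that $(\gg, \widetilde{B})$ admits a symplectic structure, and $(\gg, \widetilde{B})$ is quadratic by construction (the double extension produces an invariant scalar product), the superalgebra $\gg$ must be nilpotent. My plan is to extract nilpotency of $D$ from nilpotency of $\gg$.

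\begin{proof}
Since $(\gg, \widetilde{B})$ is a quadratic Lie superalgebra admitting a symplectic structure, it is a quadratic symplectic Lie superalgebra, and hence nilpotent by Corollary \ref{cor}. We shall deduce the nilpotency of $D$ from that of $\gg$. Recall the bracket of the double extension: for $X, Y \in \aa$ one has $[e,X] = D(X)$, $[X,Y] = [X,Y]_{\aa} + B(D(X),Y)e^*$, and $e^*$ is central. The plan is to track the action of $\mathrm{ad}(e)$ on $\gg$ and relate its iterates to those of $D$. A direct computation shows that for any $X \in \aa$,
$$
(\mathrm{ad}\, e)^k(X) = D^k(X) + (\text{a term in } \KK e^*),
$$
where the $\KK e^*$-component arises from the correction term $B(D(X),Y)e^*$ interacting with $[e, D^{k-1}(X)]$; more precisely, since $e^*$ is central and $\mathrm{ad}(e)(e) = 0$, the projection of $(\mathrm{ad}\, e)^k(X)$ onto $\aa$ along $\KK e \oplus \KK e^*$ is exactly $D^k(X)$.

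Now nilpotency of $\gg$ forces $\mathrm{ad}(e)$ to be a nilpotent endomorphism of $\gg$, so there exists $N$ with $(\mathrm{ad}\, e)^N = 0$. Projecting the identity $(\mathrm{ad}\, e)^N(X) = D^N(X) + (\text{term in } \KK e^*)$ onto the subspace $\aa$, which is complementary to $\KK e \oplus \KK e^*$, yields $D^N(X) = 0$ for all $X \in \aa$. Hence $D^N = 0$ on $\aa$, i.e.\ $D$ is nilpotent.

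The main obstacle is the bookkeeping of the $\KK e^*$-component in the iterates of $\mathrm{ad}(e)$: one must verify that this extra contribution never feeds back into the $\aa$-component, which is guaranteed precisely because $e^*$ is central and $D$ preserves $\aa$. Once this is established, the projection argument is immediate. I note that nilpotency of $\mathrm{ad}(e)$ follows from nilpotency of the whole superalgebra $\gg$ via the descending central sequence $\mathcal{C}^k(\gg)$, since $\mathcal{C}^k(\gg) = \{0\}$ for large $k$ implies every iterated bracket, in particular $(\mathrm{ad}\, e)^k$, eventually vanishes.
\end{proof}
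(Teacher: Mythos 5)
Your proof is correct and follows essentially the same route as the paper: invoke Corollary \ref{cor} to get nilpotency of $\gg$, hence of $\mathrm{ad}(e)$, and read off nilpotency of $D$ from $[e,X]=D(X)$. The only remark is that your bookkeeping of a possible $\KK e^*$-component in $(\mathrm{ad}\,e)^k(X)$ is unnecessary: the correction term $B(D(X),Y)e^*$ occurs only in brackets of two elements of $\aa$, so $[e,X]=D(X)$ exactly and $(\mathrm{ad}\,e)^k(X)=D^k(X)$ with no extra term.
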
 

\begin{proof}
Thanks to Corollary \ref{cor} $\aa$ is a nilpotent Lie superalgebra, i.e.  $\aa_{\bar 0}$ is a nilpotent Lie algebra and the adjoint representation of $\aa_{\bar 0}$ over $\aa_{\bar 1}$ is nilpotent: for every $X \in \aa_{\bar 0}$, $\rho_{\aa}(X) \in {\mathfrak g}{\mathfrak l}(\aa_{\bar 1})$ is nilpotent.   Applying Theorem \ref{thm_Doubleextension} in our case we have the double extension $\gg = \KK e \oplus \aa \oplus \KK e^*$ with $\gg_{\bar 0} = \KK e \oplus \aa_{\bar 0} \oplus \KK e^*$ and $\gg_{\bar 1} = \aa_{\bar 1}$. By construction it can be seen the following conditions
$$\begin{array}{lll}
[e,X]&=&D(X) \\[1mm]
[X,Y]&=&[X,Y]_{\aa}+ \varphi(X,Y) \\[1mm]
[e^*,X]&=&[e,e^*]=0
\end{array}$$
for $X,Y \in \aa.$
Moreover, the fact that $[e,X]=D(X)$ implies that  the action of $\KK e$ over $\aa_{\bar 1} = \gg_{\bar 1}$ is nilpotent if and only if  the linear map $D$ is nilpotent. Since $\gg$ admits  a symplectic structure, $\gg$ is nilpotent and in particular the action of $\KK e$ over $\gg_{\bar 1}$ must be nilpotent which concludes the proof of the statement.
\end{proof}

In what follows we are going to apply the aforementioned double extensions to the quadratic symplectic  Lie superalgebras $\gg_{4,1}^s$ and $\gg_{6,4}^s$. For both of them we consider  the double extensions obtained by all the possible even nilpotent skew-supersymmetric superderivations. It can be checked that  all Lie superalgebras  double extensions obtained are symplectic. Therefore, a straightforward computation leads to the following results. 

\begin{prop} \label{prop3.3}
Let $(\gg, \widetilde{B})$ be a double extension of the   quadratic Lie superalgebra $(\gg_{4,1}^s, B)$ by the $1$-dimensional Lie algebra $(\CC e)_{\bar 0}$ by means of $\psi : \CC e \to {\rm Der}_a (\gg_{4,1}^s, B),$ defined by $\psi(e)=D,$ and $D$ an even nilpotent skew-supersymmetric superderivation of $(\gg_{4,1}^s, B)$.  Assume that the  quadratic Lie superalgebra $(\gg_{4,1}^s, B)$ is endowed with a symplectic structure presented in Proposition \ref{prop3.1}. Then, $(\gg, \widetilde{B},\widetilde{\omega})$ is a quadratic symplectic Lie superalgebra  whose structure  is i-isomorphism to one Lie superalgebra of the following family: 
$$\begin{array}{l}
[e,Y_1]=-[Y_1,e]=b_2 Y_2, \\{}
[X_1,Y_1]=-[Y_1,X_1]=-2Y_2 \\{}
[Y_1,Y_1]=-2X_0+b_2e^*
\end{array}$$
where $b_2$ is a non-zero parameter in $\CC,$ with respect  to the basis $X_0,X_1,e,e^*$ (even) and $Y_1, Y_2$ (odd).  Moreover, the symplectic structure can be $\widetilde{\omega}(X,Y):=\widetilde{B}(\delta(X),Y)$ with $\delta$ an even invertible skew-supersymmetric superderivation of $(\gg, \widetilde{B})$ defined by the  $(6 \times 6)$-matrix 
 $$2E^{1,1} - 2E^{2,2} - 2E^{3,3} + 2E^{4,4} + E^{5,5} - E^{6,6}.$$ 
\end{prop}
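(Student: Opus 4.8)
The plan is to carry out the argument in three computational stages: first pin down which superderivations $D$ are admissible, then read off the double extension multiplication, and finally certify the symplectic structure by exhibiting a single even invertible skew-supersymmetric superderivation of $(\gg,\widetilde B)$. First I would determine all even nilpotent skew-supersymmetric superderivations $D$ of $(\gg_{4,1}^s,B)$. The general even skew-supersymmetric superderivation was already computed in Proposition \ref{prop3.1}: on the basis $\{X_0,X_1,Y_1,Y_2\}$ it is $D(X_0)=2b_1X_0$, $D(X_1)=-2b_1X_1$, $D(Y_1)=b_1Y_1+b_2Y_2$, $D(Y_2)=-b_1Y_2$. Since this map is triangular with diagonal entries $2b_1,-2b_1,b_1,-b_1$, nilpotency forces $b_1=0$, leaving the one-parameter family $D(Y_1)=b_2Y_2$ with all other basis vectors sent to $0$.

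Next I would compute the double extension $\gg=\CC e\oplus \gg_{4,1}^s\oplus \CC e^*$ via the explicit multiplication recalled after Theorem \ref{thm_Doubleextension}, namely $[e,Z]=D(Z)$, $[Z,W]=[Z,W]_{\aa}+B(D(Z),W)e^*$ and $[e^*,\gg]=0$. Using the $D$ above together with $B(Y_2,Y_1)=1$, the only genuinely new brackets are $[e,Y_1]=D(Y_1)=b_2Y_2$ and the cocycle contribution in $[Y_1,Y_1]=-2X_0+B(D(Y_1),Y_1)e^*=-2X_0+b_2e^*$, while $[X_1,Y_1]=-2Y_2$ is inherited unchanged; this reproduces exactly the stated family. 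For $b_2=0$ the extension is central and hence decomposable, so the genuinely new superalgebras are those with $b_2\neq 0$, and the i-isomorphism type is settled by construction, since the double extension equipped with the form $\widetilde B$ produced by Theorem \ref{thm_Doubleextension} coincides with the displayed superalgebra.

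To obtain the symplectic structure, by \cite[Proposition 4.2]{simplecticLSA} it suffices to produce an even invertible skew-supersymmetric superderivation $\delta$ of $(\gg,\widetilde B)$ and set $\widetilde\omega(X,Y):=\widetilde B(\delta(X),Y)$. I would take $\delta$ to be the diagonal map $2E^{1,1}-2E^{2,2}-2E^{3,3}+2E^{4,4}+E^{5,5}-E^{6,6}$ in the basis $(X_0,X_1,e,e^*,Y_1,Y_2)$, that is $\delta(X_0)=2X_0$, $\delta(X_1)=-2X_1$, $\delta(e)=-2e$, $\delta(e^*)=2e^*$, $\delta(Y_1)=Y_1$, $\delta(Y_2)=-Y_2$; note that its restriction to $\aa$ is precisely the invertible derivation of Proposition \ref{prop3.1} with $b_1=1$, $b_2=0$, and invertibility of $\delta$ is immediate. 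Checking the superderivation identity reduces to the three nonzero brackets: testing $\delta$ on $[e,Y_1]=b_2Y_2$ forces the eigenvalue $-2$ on $e$, and testing it on $[Y_1,Y_1]=-2X_0+b_2e^*$ forces the eigenvalue $2$ on $e^*$; skew-supersymmetry against $\widetilde B$ is then verified on the pairs $(X_0,X_1)$, $(e,e^*)$ and $(Y_1,Y_2)$ that support the form, where the relation $\delta(e)=-\delta(e^*)$ on eigenvalues is exactly what the pairing $\widetilde B(e^*,e)=1$ requires.

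The hard part will be this last stage, the existence of $\delta$ on the enlarged algebra. It is not automatic that a double extension of a symplectic superalgebra remains symplectic; indeed Lemma \ref{thm3} shows that symplecticity forces $D$ to be nilpotent, which is why the enumeration in the first step is essential. The real content is therefore to confirm that, once $D$ is nilpotent, the symplectic derivation of $\aa$ extends to all of $\gg$: the compatibility of $\delta$ with the two new brackets $[e,Y_1]$ and the cocycle term of $[Y_1,Y_1]$ rigidly determines the eigenvalues $-2$ and $2$ on $e$ and $e^*$, and one must check that these same values are simultaneously consistent with skew-supersymmetry relative to $\widetilde B(e^*,e)=1$. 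This simultaneous consistency is the crux of the proof; everything else is a direct verification.
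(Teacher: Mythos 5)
Your proposal is correct and follows essentially the route the paper intends: the paper states this result as the outcome of ``a straightforward computation'' after Lemma \ref{thm3}, and your computation is exactly that — the classification of even skew-supersymmetric superderivations from Proposition \ref{prop3.1} forces $b_1=0$ for nilpotency (matching the paper's remark that $D$ is a multiple of $E^{4,3}$), the double-extension brackets reproduce the stated family, and the diagonal $\delta$ you exhibit is readily checked to be an invertible skew-supersymmetric superderivation of $(\gg,\widetilde{B})$. The details you verify (eigenvalues $-2$ on $e$ and $2$ on $e^*$ being simultaneously compatible with the derivation identity and with $\widetilde{B}(e^*,e)=1$) are precisely the content the paper leaves implicit.
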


\begin{rem}
Note that the quadratic symplectic Lie superalgebra $(\gg, \tilde{B},\tilde{\omega})$   in Proposition \ref{prop3.3}, by \cite[Theorem 4.9]{simplecticLSA},  is  also a quadratic symplectic double extension of $(\gg_{4,1}^s,B,\delta_{4,1})$, where   $$\mbox{Mat}_{(4 \times 4)} (\delta_{4,1}, \{X_0,X_1, Y_1,Y_2\}) = -2E^{1,1}-2E^{2,2}+2E^{3,3}+E^{4,4}.$$ 
Note also that any $D$, even nilpotent skew-supersymmetric superderivation of $(\gg_{4,1}^s, B)$ is defined by the $(4 \times 4)$-matrix $E^{4,3}$. 
\end{rem}

\begin{prop}\label{prop 3.4}
Let $(\gg, \widetilde{B})$ be a double extension of the quadratic Lie superalgebra $(\gg_{6,4}^s, B)$ by the $1$-dimensional Lie algebra $(\CC e)_{\bar 0}$ by means of $\psi : \CC e \to {\rm Der}_a (\gg_{6,4}^s, B),$ defined by $\psi(e) := D,$ and $D$ an even nilpotent skew-supersymmetric superderivation of $(\gg_{6,4}^s, B)$. Assume that the  quadratic Lie superalgebra $(\gg_{6,4}^s, B)$ is endowed with a symplectic structure presented in Proposition \ref{prop3.2}. Then, $(\gg, \widetilde{B},\widetilde{\omega})$ is a quadratic symplectic Lie superalgebra whose law is i-isomorphism to one superalgebra of the following family: 

$$\begin{array}{l}
[e,Y_1]=-[Y_1,e]=b_2 Y_2+b_4Y_4  \\{}
[e,Y_3]=-[Y_3,e]=c_2 Y_2-b_2Y_4 \\{}
[X_1,Y_1]=-[Y_1,X_1]=-Y_2 \\{}
[X_1,Y_3]=-[Y_3,X_1]=Y_4 \\{}
[Y_1,Y_3]=[Y_3,Y_1]=X_0-b_2e^{*} \\{}
[Y_1,Y_1]=b_4e^*\\{}
[Y_3,Y_3]=-c_2e^*,
\end{array}$$
where $b_2,b_4,c_2$ are parameters in $\CC$ such that $(b_2,b_4,c_2)\neq (0,0,0)$, with respect to the basis $\{X_0,X_1,e,e^*,Y_1,Y_2,Y_3,Y_4\}$, being $\{X_0,X_1,e,e^*\}$ and $\{Y_1,Y_2,Y_3,Y_4\}$ bases of $\gg_{\bar 0}$ and $\gg_{\bar 1}$, respectively. Moreover, the symplectic structure can be given by $\widetilde{\omega}(X,Y):=\widetilde{B}(\delta(X),Y)$ with $\delta$ an even invertible skew-supersymmetric superderivation of $(\gg, \widetilde{B})$ defined by the $(8 \times 8)$-matrix $$2E^{1,1}-2E^{2,2}-2E^{3,3}+2E^{4,4}+E^{5,5}-E^{6,6}+E^{7,7}-E^{8,8}.$$ 
\end{prop}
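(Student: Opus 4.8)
The plan is to mirror the argument used for Proposition~\ref{prop3.3}, now carried out for the six-dimensional superalgebra $\gg_{6,4}^s$. The first step is to pin down \emph{all} even nilpotent skew-supersymmetric superderivations $D$ of $(\gg_{6,4}^s,B)$. I would start from the general even skew-supersymmetric superderivation already computed in the proof of Proposition~\ref{prop3.2}, whose matrix in the basis $\{X_0,X_1,Y_1,Y_2,Y_3,Y_4\}$ has diagonal entries $b_1+c_3,\ -(b_1+c_3),\ b_1,\ -c_3,\ c_3,\ -b_1$. A short eigenvalue computation shows that on the $(4\times4)$ odd block its characteristic polynomial factors as $(\lambda-b_1)(\lambda+b_1)(\lambda-c_3)(\lambda+c_3)$, while the even block is diagonal with entries $\pm(b_1+c_3)$. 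Hence nilpotency of $D$ is equivalent to $b_1=c_3=0$. Imposing this leaves exactly $D(Y_1)=b_2Y_2+b_4Y_4$ and $D(Y_3)=c_2Y_2-b_2Y_4$ (all other generators sent to $0$), and one checks directly that $D^2=0$; non-triviality of the extension forces $(b_2,b_4,c_2)\neq(0,0,0)$.

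The second step is to feed this $D$ into the one-dimensional double extension recalled in Subsection~2.2, i.e.\ $\psi(e)=D$, and read off the brackets of $\gg=\KK e\oplus\gg_{6,4}^s\oplus\KK e^*$ from $[e,X]=D(X)$ and $[X,Y]=[X,Y]_{\gg_{6,4}^s}+B(D(X),Y)\,e^*$. The brackets $[e,Y_1]$ and $[e,Y_3]$ reproduce $D(Y_1),D(Y_3)$ verbatim, while the $e^*$-corrections are computed from $B$: using $B(Y_4,Y_1)=B(Y_3,Y_2)=1$ and supersymmetry of $B$ on odd elements, one obtains $B(D(Y_1),Y_3)=-b_2$, $B(D(Y_1),Y_1)=b_4$ and $B(D(Y_3),Y_3)=-c_2$, which yield precisely $[Y_1,Y_3]=X_0-b_2e^*$, $[Y_1,Y_1]=b_4e^*$ and $[Y_3,Y_3]=-c_2e^*$ (the brackets $[X_1,Y_1]=-Y_2$ and $[X_1,Y_3]=Y_4$ being unchanged since $D(X_1)=0$). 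Thus the double extension is, term by term, a member of the displayed family, which already settles the ``i-isomorphic to one superalgebra of the family'' claim.

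The third step is to exhibit the symplectic structure. By the criterion recalled at the beginning of Section~4 (\cite[Proposition 4.2]{simplecticLSA}), it suffices to show that the proposed diagonal map $\delta=2E^{1,1}-2E^{2,2}-2E^{3,3}+2E^{4,4}+E^{5,5}-E^{6,6}+E^{7,7}-E^{8,8}$ (in the basis $\{X_0,X_1,e,e^*,Y_1,Y_2,Y_3,Y_4\}$) is an even, invertible, skew-supersymmetric superderivation of $(\gg,\widetilde B)$; then $\widetilde\omega(X,Y):=\widetilde B(\delta(X),Y)$ is the desired symplectic form and $(\gg,\widetilde B,\widetilde\omega)$ is quadratic symplectic. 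Evenness and invertibility are immediate from the diagonal form. The superderivation identity $\delta([U,V])=[\delta(U),V]+[U,\delta(V)]$ need only be verified on the generating brackets: for example $\delta([Y_1,Y_3])=2X_0-2b_2e^*$ matches $[\delta(Y_1),Y_3]+[Y_1,\delta(Y_3)]=2(X_0-b_2e^*)$, and likewise for $[e,Y_1]$, $[X_1,Y_1]$, $[Y_1,Y_1]$, and the remaining brackets. Skew-supersymmetry reduces to checking $\widetilde B(\delta(U),V)=-\widetilde B(U,\delta(V))$ on the pairs where $\widetilde B$ is non-zero, namely $(X_0,X_1)$, $(e^*,e)$, $(Y_4,Y_1)$ and $(Y_3,Y_2)$, each of which holds because the two diagonal coefficients of $\delta$ on such a pair are opposite.

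The bracket and bilinearity verifications are the routine part; the one place demanding care — and the step I would treat as the main obstacle — is the reduction in the first step: one must be certain that \emph{every} nilpotent skew-supersymmetric superderivation is captured by $b_1=c_3=0$, which is exactly the content of the eigenvalue computation on the odd block of the Proposition~\ref{prop3.2} matrix. Once nilpotency is pinned down to this two-parameter vanishing, the remainder is a direct, if lengthy, verification identical in spirit to that of Proposition~\ref{prop3.3}.
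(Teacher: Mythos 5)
Your proposal is correct and follows exactly the route the paper intends: the paper gives no written proof of Proposition~\ref{prop 3.4}, stating only that it follows by ``a straightforward computation'' from the classification of even nilpotent skew-supersymmetric superderivations of $(\gg_{6,4}^s,B)$ (recorded in the remark after the proposition as $b_2E^{4,3}+c_2E^{4,5}+b_4E^{6,3}-b_2E^{6,5}$, matching your $b_1=c_3=0$ reduction) followed by reading off the double-extension brackets and exhibiting the diagonal $\delta$. Your eigenvalue argument on the odd block, the $e^*$-corrections via $B(D(\cdot),\cdot)$, and the derivation/skew-supersymmetry checks for $\delta$ all check out.
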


\begin{rem}
We can check that the quadratic symplectic Lie superalgebra $(\gg, \tilde{B},\tilde{\omega})$ in Proposition \ref{prop 3.4}, by \cite[Theorem 4.9]{simplecticLSA}, is  also a quadratic symplectic double extension of $(\gg_{6,4}^s,B,\delta_{6,4})$, where  $$\mbox{Mat}_{(6 \times 6)}(\delta_{6,4},\{X_0,X_1, Y_1,Y_2,Y_3,Y_4\}) = -2E^{1,1}-2E^{2,2}+2E^{3,3}+E^{4,4}-E^{5,5}+E^{6,6}.$$ 
Note also that any $D$,  even nilpotent skew-supersymmetric superderivation of $(\gg_{6,4}^s, B)$ is defined by the  $(6 \times 6)$-matrix  
$$  b_2E^{4,3}+c_2E^{4,5}+b_4E^{6,3}-b_2E^{6,5}.$$

\end{rem}

\subsection{Generalized double extensions of quadratic symplectic  Lie superalgebras}

Recall now the notion of generalized double extension (see \cite{2}). Thus, suppose $\displaystyle (\gg = \gg_{\bar 0} \oplus \gg_{\bar 1},B)$ a quadratic Lie superalgebra and $\displaystyle D$ an odd skew-supersymmetric superderivation of $\displaystyle (\gg,B)$, $\displaystyle X_0$ a nonzero element of
$\displaystyle \gg_{{\bar 0}}$ such that $D(X_0) = 0, \;
B(X_0,X_0) = 0, \;
D^2 = \frac{1}{2}[X_0,.]_{\gg}$. 

Then the generalized double extension $\displaystyle (\widetilde{\gg}=\KK e \oplus \gg \oplus \KK e^*,\widetilde{B}) $  of $\displaystyle (\gg,B)$
by the $1$-dimensional Lie superalgebra $\displaystyle
(\KK e)_{\bar 1}$ (by means of the odd skew-supersymmetric superderivation $D$ and $\displaystyle X_0$) is a quadratic Lie superalgebra  with  $\dim({\widetilde{\gg}}_{\bar 1}) = \dim(\gg_{\bar 1}) + 2  $.  The brackets on $\widetilde{\gg}$ are defined by $\displaystyle
[e,e] = X_0, \; [e^*,\widetilde{\gg}] = \{0\}$, $\displaystyle
[e,X] =  D(X)-B(X,X_0) e^*$, $[X,Y] = [X,Y]_{\gg} - B(D(X),Y) e^*,$ for all $X,Y \in \gg,$ 
and the supersymmetric bilinear form $\displaystyle \widetilde{B}: \widetilde{\gg} \times \widetilde{\gg} \longrightarrow \KK$ is defined by $\displaystyle
\widetilde{B}\mid_{{\gg}\times {\gg}} =B,
 \widetilde{B}(e^*,e) = 1,
 \widetilde{B}(\gg,e)=  \widetilde{B}(\gg,e^*) = \{0\} $, $\widetilde{B}(e,e)= \widetilde{B}(e^*,e^*) = 0.$
The quadratic Lie superalgebra $\displaystyle (\widetilde{\gg}=\KK e \oplus {\gg} \oplus \KK e^*,\widetilde{B})$ is called an {\it generalized double extension of} $(\gg, B)$ by the $1$-dimensional Lie superalgebra $\displaystyle
(\KK e)_{\bar 1}$ (by means of the odd skew-supersymmetric superderivation $D$ and $\displaystyle X_0$).

\begin{thm}
Suppose   $\displaystyle (\gg = \gg_{\bar 0} \oplus \gg_{\bar 1},B,\omega)$  is a quadratic symplectic Lie superalgebra, then its generalized double extension $\displaystyle (\widetilde{\gg} = \KK e \oplus \gg \oplus \KK e^*,\widetilde{B})$ is always a nilpotent quadratic Lie superalgebra.
\end{thm}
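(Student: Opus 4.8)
The plan is to reduce the nilpotency of $\widetilde{\gg}$ to that of $\gg$, which we already control. First I would invoke Corollary \ref{cor}: since $(\gg,B,\omega)$ is quadratic symplectic it is nilpotent, so $\gg_{\bar 0}$ is a nilpotent Lie algebra and, for every $X \in \gg_{\bar 0}$, the operator $\rho_{\gg}(X) \in \mathfrak{gl}(\gg_{\bar 1})$ is nilpotent. This is the only place where the symplectic hypothesis is used; everything afterwards rests solely on the nilpotency of $\gg$.

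Next I would exploit that $\KK e^*$ is a central ideal of $\widetilde{\gg}$, since by construction $[e^*,\widetilde{\gg}] = \{0\}$. Because nilpotency passes through central extensions --- if $\mathcal{C}^k(\widetilde{\gg}/\KK e^*) = \{0\}$ then $\mathcal{C}^k(\widetilde{\gg}) \subseteq \KK e^*$ and hence $\mathcal{C}^{k+1}(\widetilde{\gg}) = [\mathcal{C}^k(\widetilde{\gg}),\widetilde{\gg}] \subseteq [\KK e^*,\widetilde{\gg}] = \{0\}$ --- it suffices to prove that the quotient $\widehat{\gg} := \widetilde{\gg}/\KK e^* \cong \KK e \oplus \gg$ is nilpotent. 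In $\widehat{\gg}$ the $e^*$-terms of all the defining brackets disappear, leaving the induced multiplication $[e,e] = X_0$, $[e,X] = D(X)$ and $[X,Y] = [X,Y]_{\gg}$ for $X,Y \in \gg$.

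To establish nilpotency of $\widehat{\gg}$ I would apply the same criterion invoked in the proof of the opening Theorem of this section, namely \cite{Salgado}: a finite-dimensional Lie superalgebra is nilpotent provided its even part is a nilpotent Lie algebra and its even part acts nilpotently on its odd part. Here $\widehat{\gg}_{\bar 0} = \gg_{\bar 0}$ is nilpotent, while $\widehat{\gg}_{\bar 1} = \KK e \oplus \gg_{\bar 1}$. For $X \in \gg_{\bar 0}$ the operator $\rho_{\widehat{\gg}}(X)$ restricts to $\rho_{\gg}(X)$ on $\gg_{\bar 1}$ and sends $e$ to $[X,e] = -D(X) \in \gg_{\bar 1}$; since no bracket $[\gg_{\bar 0},\widehat{\gg}_{\bar 1}]$ produces a component along $e$, the operator $\rho_{\widehat{\gg}}(X)$ is block lower-triangular with diagonal blocks $0$ (on $\KK e$) and $\rho_{\gg}(X)$ (on $\gg_{\bar 1}$). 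Consequently, if $\rho_{\gg}(X)^n = 0$ then $\rho_{\widehat{\gg}}(X)^{n+1} = 0$, because $\rho_{\widehat{\gg}}(X)^{n+1}(e) = -\rho_{\gg}(X)^n(D(X)) = 0$ while $\rho_{\widehat{\gg}}(X)^{n}$ already annihilates $\gg_{\bar 1}$. Thus $\rho_{\widehat{\gg}}(X)$ is nilpotent for every $X \in \widehat{\gg}_{\bar 0}$, the criterion applies, and $\widehat{\gg}$ --- hence $\widetilde{\gg}$ --- is nilpotent.

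I expect the main obstacle to be bookkeeping rather than conceptual: one must verify carefully that adjoining the odd generator $e$ does not disturb the nilpotent action of $\gg_{\bar 0}$, i.e. the triangular structure of $\rho_{\widehat{\gg}}(X)$, and that the reduction modulo the central ideal $\KK e^*$ is legitimate. As a consistency check it is worth noting that $D$ is automatically nilpotent in this setting, since $D^2 = \tfrac{1}{2}[X_0,\cdot]_{\gg}$ with $X_0 \in \gg_{\bar 0}$ and $\gg$ nilpotent forces $D^2$, and hence $D$, to be nilpotent --- although this fact is not strictly needed for the argument above.
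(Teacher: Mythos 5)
Your proof is correct and follows essentially the same route as the paper's: both arguments rest on Corollary \ref{cor} to get nilpotency of $\gg$, on the centrality of $e^*$, and on the criterion of \cite{Salgado} that nilpotency of $\widetilde{\gg}_{\bar 0}=\gg_{\bar 0}$ together with nilpotency of each $\widetilde{ad}_X$ suffices. Your passage to the quotient $\widetilde{\gg}/\KK e^*$ is merely a cleaner piece of bookkeeping than the paper's explicit iteration of $(\widetilde{ad}_X)^n$ with its $e^*$-correction terms, and it is legitimate.
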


\begin{proof}
Let us note that $\widetilde{\gg}$ is nilpotent if and only if $\widetilde{\gg}_{\bar 0}$ is nilpotent and, for all $X \in \widetilde{\gg}_{\bar 0}$,  $\widetilde{ad}_X$ is nilpotent. 

Thanks to Corollary \ref{cor} $\gg$ is a nilpotent Lie superalgebra, i.e.  $\gg_{\bar 0}$ is a nilpotent Lie algebra and the adjoint representation of $\gg_{\bar 0}$ over $\gg_{\bar 1}$ is nilpotent:  for every $X \in \gg_{\bar 0}$, $\rho_{\gg}(X) \in {\mathfrak g}{\mathfrak l}(\gg_{\bar 1})$ is nilpotent, i.e. $ad_X$ is nilpotent. After applying the generalized double extension $\widetilde{\gg} = \KK e \oplus \gg \oplus \KK e^*$ with $\widetilde{\gg}_{\bar 0} = \gg_{\bar 0}$ and $\widetilde{\gg}_{\bar 1} = \KK e \oplus \gg_{\bar 1} \oplus \KK e^*$ we have
$$\begin{array}{lll}
[e,e]&=& X_0\\[1mm]
[e,X]&=&D(X)-B(X,X_0)e^* \\[1mm]
[X,Y]&=&[X,Y]_{\gg}-B(D(X),Y) e^*\\[1mm]
\end{array}$$
for $X,Y \in \gg.$ Since $\widetilde{\gg}_{\bar 0} ={\gg}_{\bar 0}$ we already have that $\widetilde{\gg}_{\bar 0}$ is nilpotent, therefore only rest to check that $\widetilde{ad}_X$ is nilpotent for all $X \in \widetilde{\gg}_{\bar 0}$.

Let $X \in \gg_{\bar 0} = \widetilde{\gg}_{\bar 0}$, since $e^* \in \mathfrak{z}(\widetilde{\gg})$, we get for all homogeneous element  $ Y \in \gg$ 
\begin{align}
(\widetilde{ad}_ X)^2(Y) &= [X,[X,Y]] \nonumber \\ 
&= [X,[X,Y]_{\gg}-B(D(X),Y) e^*] \nonumber\\
&= [X,[X,Y]_{\gg}]  \nonumber\\
&=[X,[X,Y]_{\gg}]_{\gg}-B(D(X),[X,Y]_{\gg}) e^* \nonumber\\ 
&=(ad_{X})^2(Y)+B(X,D([X,Y]_{\gg})) e^* \nonumber\\
&=(ad_{X})^2(Y)+B( D([X,Y]_{\gg}),X) e^* \nonumber\\
&= (ad_{X})^2(Y) + B(D(ad_{X}(Y)), X )e^* \nonumber
\end{align}
By iterating this process we get
\begin{align}
(\widetilde{ad}_X)^n(Y) = (ad_X)^n(Y) + B\bigl(D\bigl((ad_X)^{n-1}(Y)\bigr), X\bigr)e^* \nonumber .
\end{align}

On account of $ad_X$ is nilpotent we have that $\widetilde{ad}_X$ acting on $\gg$ is also nilpotent. Since $e^* \in \mathfrak{z}(\widetilde{\mathfrak g})$, only remains to see the action of the adjoint operator $\widetilde{ad}_X$ over $e$. From $[e,X]=D(X)-B(X,X_0)e^*$ we get for any $X$ even basis vector
$$\widetilde{ad}_X(e)=-D(X)+B(X,X_0)e^*$$
By construction we have $D^2  = \frac{1}{2}[X_0,\cdot]_{\mathfrak g}=\frac{1}{2}ad_{X_0}$ with $ad_{X_0}$ nilpotent. Consequently  $D^2$ and  $D$ are nilpotent. It can be then, easily checked that the action of the adjoint operator $\widetilde{ad}_X$ over $e$ is also nilpotent,  in fact
\begin{align}
(\widetilde{ad}_X)^n(e) = (ad_X)^{n-1}(D(X)) + B(D(X), (ad_X)^{n-2}(D(X)) \nonumber 
\end{align}

\noindent which concludes the proof.
\end{proof}

In what follows, we study the  generalized  double extensions of the quadratic symplectic Lie superalgebra $\gg_{4,1}^{s}$ described in Proposition \ref{prop3.1}, i.e. the only one, up to i-isomorphism verifying $\dim(\gg_{\bar 0})=\dim(\gg_{\bar 1})=2.$ In particular, we show the connection between  $\gg_{4,1}^{s}$ and $\gg_{6,4}^s$ which is the only quadratic symplectic Lie superalgebra, up to i-isomorphism, verifying $\dim(\gg_{\bar 0})=2, \ \dim(\gg_{\bar 1})=4$, see Proposition \ref{prop3.2}.  

\begin{prop}
If $({\mathfrak g}, \widetilde{B})$ is a complex  quadratic Lie superalgebra which is a generalized double extension of the quadratic symplectic Lie superalgebra $(\gg_{4,1}^s, B, \omega)$, then $\gg$ is i-isomorphism to one superalgebra of the following family of quadratic Lie superalgebras expressed by the  non-null bracket products that follow: $$\begin{array}{l}
[e,e]=X_0 \\{}
[e,X_1]=-[X_1,e]=a_1Y_1+a_2Y_2-e^* \\{}
[e,Y_1]=[Y_1,e]=-a_2 X_0 \\{}
[e,Y_2]=[Y_2,e]=a_1 X_0 \\{}
[X_1,Y_1]=-[Y_1,X_1]=-2Y_2-a_2e^* \\{}
[X_1,Y_2]=-[Y_2,X_1]=a_1e^* \\{}
[Y_1,Y_1]=-2X_0,
\end{array}$$
where $a_1,a_2$ are parameters in $\CC$, with respect to the basis $\{X_0,X_1,Y_1,Y_2,e,e^*\}$, being $\{X_0,X_1\}$ a basis of $\gg_{\bar 0}$ and $\{ Y_1,Y_2,e,e^{*}\}$ a basis of $\gg_{\bar 1}$, and having $\widetilde{B}(X_0 ,X_1) = \widetilde{B}(Y_2,Y_1) = \widetilde{B}(e^*,e) = 1$. If $a_1=0$  all the quadratic superalgebras of the above family admit a symplectic structure which can be defined by $\widetilde{\omega}(X,Y):=\widetilde{B}(\delta(X),Y)$ with $\delta$ an even invertible skew-supersymmetric superderivation of $({\mathfrak g}, \widetilde{B})$ defined by the $(6\times 6)$-matrix  
$$  2cE^{1,1}-2cE^{2,2}+cE^{3,3}-cE^{4,4}+cE^{5,5}-cE^{6,6}.$$
where $c \neq 0$. Moreover, if $a_1=a_2=0$, then $\gg$ is isomorphic to $\gg_{6,4}^s$. 
\end{prop}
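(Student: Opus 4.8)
The plan is to carry out the generalized double extension construction explicitly with $\aa=\gg_{4,1}^s$, so the first task is to determine all admissible data: an odd skew-supersymmetric superderivation $D$ of $(\gg_{4,1}^s,B)$ together with a nonzero even element $W_0$ satisfying $D(W_0)=0$, $B(W_0,W_0)=0$ and $D^2=\frac{1}{2}\,\mathrm{ad}_{W_0}$. First I would write $D$ as a general odd map on $\{X_0,X_1,Y_1,Y_2\}$ and impose the superderivation identity on the two nonzero brackets $[X_1,Y_1]=-2Y_2$ and $[Y_1,Y_1]=-2X_0$ (the vanishing brackets add nothing). This forces $D(X_1)\in\gg_{\bar1}$ and $D(Y_i)\in\gg_{\bar0}$ subject to a few linear relations; imposing the skew-supersymmetry $B(D(U),V)=-(-1)^{u}B(U,D(V))$ then pins $D$ down to the two-parameter family $D(X_0)=0$, $D(X_1)=a_1Y_1+a_2Y_2$, $D(Y_1)=-a_2X_0$, $D(Y_2)=a_1X_0$. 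A one-line computation gives $D^2=0$, so the requirement $D^2=\frac{1}{2}\,\mathrm{ad}_{W_0}$ reduces to $W_0\in\mathfrak z(\gg_{4,1}^s)_{\bar0}=\CC X_0$; since $B(X_0,X_0)=0$ and $D(X_0)=0$ hold automatically, every nonzero $W_0=\mu X_0$ is admissible, and rescaling $e$ (with the compensating rescaling of $e^*$) normalizes $\mu=1$.

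Next I would substitute this data into the defining brackets of the generalized double extension recalled before the statement. Using $B(X_1,X_0)=1$ one obtains $[e,X_1]=D(X_1)-e^*=a_1Y_1+a_2Y_2-e^*$, while $[e,Y_1]=D(Y_1)=-a_2X_0$ and $[e,Y_2]=D(Y_2)=a_1X_0$; the correction terms $-B(D(X),Y)e^*$ turn the $\aa$-brackets into $[X_1,Y_1]=-2Y_2-a_2e^*$ and $[X_1,Y_2]=a_1e^*$ and leave $[Y_1,Y_1]=-2X_0$, together with $[e,e]=X_0$. This is exactly the displayed family, with $\widetilde B$ inherited from $B$ and the normalization $\widetilde B(e^*,e)=1$; the Lie superalgebra axioms need no separate verification since they are guaranteed by the generalized double extension construction. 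Hence every generalized double extension of $(\gg_{4,1}^s,B,\omega)$ is i-isomorphic to a member of the family, parametrized by $(a_1,a_2)\in\CC^2$.

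For the symplectic claim when $a_1=0$ I would use the criterion that $(\gg,\widetilde B)$ is symplectic if and only if it admits an even invertible skew-supersymmetric superderivation $\delta$, with $\widetilde\omega(X,Y)=\widetilde B(\delta(X),Y)$ \cite{simplecticLSA}. I would take $\delta$ to be the stated diagonal map $\delta(X_0)=2cX_0$, $\delta(X_1)=-2cX_1$, $\delta(Y_1)=cY_1$, $\delta(Y_2)=-cY_2$, $\delta(e)=ce$, $\delta(e^*)=-ce^*$. It is even and, for $c\neq0$, invertible; it is skew-supersymmetric because each pair of $\widetilde B$-dual basis vectors carries opposite eigenvalues, so $\widetilde B(\delta U,V)+\widetilde B(U,\delta V)=0$ on those pairs and is trivially zero on all others. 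The only genuine computation is the derivation identity, and checking it on the brackets involving $a_1$, namely $[e,X_1]$, $[X_1,Y_2]$ and $[e,Y_2]$, shows it holds precisely when $a_1=0$, which is the reason for the hypothesis.

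Finally, for $a_1=a_2=0$ I would identify the resulting superalgebra with $\gg_{6,4}^s$. The quickest route is to observe that this member is a $6$-dimensional complex quadratic Lie superalgebra with $\dim\gg_{\bar1}=4$ which is symplectic (by the previous paragraph with $a_2=0$), so by the uniqueness in Proposition \ref{prop3.2}(ii) it is i-isomorphic to $\gg_{6,4}^s$. Alternatively one can build the isomorphism by hand: under $\mathrm{ad}_{X_1}$ the odd part of each algebra splits into two length-two strings, so I would send the string heads $Y_1,e$ to a basis of $\mathrm{span}\{Y_1,Y_3\}$ in $\gg_{6,4}^s$ and propagate along the strings to define the images of the tails $Y_2,e^*$. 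The single delicate point is matching the symmetric odd--odd pairing of the heads: here it is the diagonal form $\mathrm{diag}(-2,1)$, whereas in $\gg_{6,4}^s$ it is the hyperbolic form $\left(\begin{smallmatrix}0&1\\1&0\end{smallmatrix}\right)$, and over $\CC$ these nondegenerate symmetric forms are congruent, which furnishes the required change of basis. I expect this congruence (or equivalently the appeal to Proposition \ref{prop3.2}) to be the only conceptual step, everything else being direct substitution.
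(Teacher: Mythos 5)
Your proposal is correct and follows essentially the same route as the paper: compute the two-parameter family of odd skew-supersymmetric superderivations of $(\gg_{4,1}^s,B)$, feed it into the generalized double extension formulas, exhibit the diagonal $\delta$ (whose derivation property on $[e,X_1]$, $[e,Y_2]$, $[X_1,Y_2]$ is exactly what forces $a_1=0$), and identify the $a_1=a_2=0$ member with $\gg_{6,4}^s$. For that last step the paper uses your ``alternative'' route, i.e.\ the explicit change of basis diagonalizing the hyperbolic pairing $[Y_1,Y_3]=X_0$ into $\mathrm{diag}(-2,1)$; this is slightly safer than invoking the uniqueness of Proposition \ref{prop3.2}(ii), since that classification is drawn from the list of indecomposable algebras in \cite{classificationLow}, so one would first have to check the $a_1=a_2=0$ algebra falls within its scope.
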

\begin{proof}
Starting from $(\gg_{4,1}^s,B)$ we compute all the odd skew-supersymmetric superderivations $D$, obtaining the following expression for them:
$$D(X_1)=a_1Y_1+a_2Y_2, \quad D(X_0)=0, \quad D(Y_1)=-a_2X_0, \quad D(Y_2)=a_1X_0$$
Let us note that all them verify the conditions required in  order to be able to apply the generalized double extension, i.e. $D(X_0) = 0, \;
B(X_0,X_0) = 0, \;
D^2  = \frac{1}{2}[X_0,\cdot]_{\mathfrak g}$, in particular $D^2$ is the null derivation which is the same as $\frac{1}{2}ad_{X_{0}}$ since $X_0$ is a central element of the superalgebra. Consequently, we get the following expression for the generalized double extension quadratic Lie superalgebra  with respect to the basis $\{X_0,X_1,Y_1,Y_2,e,e^*\}$, where $\{X_0,X_1\}$ and $\{ Y_1,Y_2,e,e^{*}\}$ are the bases of $\gg_{\bar0}$ and $\gg_{\bar 1}$, respectively:
$$\begin{array}{l}
[e,e]=X_0 \\{}
[e,X_1]=-[X_1,e]=a_1Y_1+a_2Y_2-e^{*}  \\{}
[e,Y_1]=[Y_1,e]=-a_2 X_0 \\{}
[e,Y_2]=[Y_2,e]=a_1 X_0 \\{}
[X_1,Y_1]=-[Y_1,X_1]=-2Y_2-a_2e^{*} \\{}
[X_1,Y_2]=-[Y_2,X_1]=a_1e^{*} \\{}
[Y_1,Y_1]=-2X_0
 \end{array} $$
 and $ \widetilde{B}(X_0 ,X_1)=\widetilde{B}(Y_2,Y_1) = \widetilde{B}(e^*,e) = 1$. For $a_1=0$ and looking for a diagonal even invertible derivation, we start setting for the generators $$\delta(X_1)=b_1X_1, \quad \delta(e)=ce, \quad b_1, c \neq 0.$$
 From the condition of being a derivation over $[e,e]$ we get $\delta(X_0)=2cX_0$ and then over $[Y_1,Y_1]$ we assert that $\delta(Y_1) = cY_1$. Now by applying the condition of derivation over $[e,X_1]$ we obtain $\delta(e^*)=(c+b_1)e^*$. Next, from the condition of derivation over $[X_1,Y_1]$ it is obtained $\delta(Y_2) = (c+b_1)Y_2$. Finally, the condition of being skew-supersymmetric leads to $b_1=-2c$ obtaining then the derivation of the statement.
 
 \
 
Furthermore, it can be checked that if we apply the i-isomorphism defined by $X'_0=X_0, \ X'_1=X_1$ together with
$$ Y'_1=Y_1-Y_3, \ Y'_2=\frac{1}{2}(Y_2+Y_4), Y'_3=\frac{1}{\sqrt{2}}(Y_1+Y_3), \ Y'_4=\frac{1}{\sqrt{2}}(-Y_2+Y_4)$$
to the Lie superalgebra  $$\gg_{6,4}^s:  [X_1,Y_1]=-[Y_1,X_1]=-Y_2,  [X_1,Y_3]=-[Y_3,X_1]=Y_4,  [Y_3,Y_1]=[Y_1,Y_3]=X_0$$
we get 
$$\begin{array}{ll}
[Y'_3,Y'_3]=X'_0, &
[Y'_3,X'_1]=-[X'_1,Y'_3]=-Y'_4,  \\{}
[X'_1,Y'_1]=-[Y'_1,X'_1]=-2Y'_2, &
[Y'_1,Y'_1]=-2X'_0,
\end{array} $$
and by renaming $Y'_3=e$, $Y'_4=e^*$, $Y'_1=Y_1$, $Y'_2=Y_2$, $X'_0=X_0$ and $X'_1=X_1$ we  obtain the superalgebra of the family of the statement with $a_1=a_2=0$, which concludes the proof.
\end{proof}

\section{Inductive description of quadratic symplectic Lie superalgebras of filiform type}

The purpose of this section is to provide an inductive description of quadratic symplectic
Lie superalgebras of filiform type. The quadratic symplectic Lie superalgebra
$\gg_{4,1}^s$ described in \ref{prop3.1} and the family of quadratic symplectic Lie superalgebras described in \ref{prop3.3} are all filiform.

\begin{lem} \label{lem1}
Let $(\gg,B,\omega)$ be a quadratic symplectic Lie superalgebra of filiform type  $\gg_{\bar 1}$, $$\gg_{\bar 1}=V_m \supset \dots \supset V_1
\supset V_0.$$ and $\delta$ the unique even invertible
skew-supersymmetric superderivation of $(\gg,B)$
such that $\omega (X,Y)= B(\delta(X),Y)$, for all $X,Y\in \gg$.
Then:
\begin{enumerate}
\item[(i)] $\omega (V_1, V_1)= \{0\}.$ \item[(ii)] $V_1= {\mathfrak z}(\gg) \cap \gg_{\bar 1}.$
\end{enumerate}
\end{lem}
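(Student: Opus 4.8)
The plan is to lean on two structural facts about the even, supersymmetric, non-degenerate form $B$. First, its restriction $B|_{\gg_{\bar 1}}$ is a non-degenerate \emph{skew}-symmetric form: for odd $X,Y$ supersymmetry gives $B(X,Y)=(-1)^{\bar 1\bar 1}B(Y,X)=-B(Y,X)$, so $\dim\gg_{\bar 1}=m$ is even and in particular $B(V_1,V_1)=\{0\}$ already because $V_1$ is a line. Second, invariance together with non-degeneracy yields $\mathfrak z(\gg)=[\gg,\gg]^{\perp}$ (for central $z$ one has $B([X,Y],z)=B(X,[Y,z])=0$, and conversely $B([z,X],Y)=B(z,[X,Y])=0$ for all $Y$ forces $[z,X]=0$). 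Before using these I would translate the flag into the central descending sequence: since $[\gg_{\bar 0},V_{i+1}]=V_i$, one identifies $V_k=\mathcal{C}^{m-k}(\gg_{\bar 1})$, so that $V_1=\mathcal{C}^{m-1}(\gg_{\bar 1})$ is the deepest nonzero term while $V_{m-1}=\mathcal{C}^{1}(\gg_{\bar 1})=[\gg_{\bar 0},\gg_{\bar 1}]$ is exactly the odd part of $[\gg,\gg]$.

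For item $(i)$ the crucial observation is that the even superderivation $\delta$ preserves the whole filtration. I would prove $\delta\bigl(\mathcal{C}^{k}(\gg_{\bar 1})\bigr)\subseteq\mathcal{C}^{k}(\gg_{\bar 1})$ by induction on $k$: the base case is trivial since $\delta$ is even, and if the claim holds at level $k$ then for $X\in\gg_{\bar 0}$ and $Y\in\mathcal{C}^{k}(\gg_{\bar 1})$ the superderivation identity $\delta([X,Y])=[\delta(X),Y]+[X,\delta(Y)]$ lands in $[\gg_{\bar 0},\mathcal{C}^{k}(\gg_{\bar 1})]=\mathcal{C}^{k+1}(\gg_{\bar 1})$, because $\delta(X)\in\gg_{\bar 0}$ and $\delta(Y)\in\mathcal{C}^{k}(\gg_{\bar 1})$. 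Taking $k=m-1$ gives $\delta(V_1)\subseteq V_1$. Then
$$\omega(V_1,V_1)=B(\delta(V_1),V_1)\subseteq B(V_1,V_1)=\{0\},$$
where the last equality is Lemma \ref{lem: V1Vi} with $i=1$ (equivalently the skew-symmetry noted above). This settles $(i)$.

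For item $(ii)$ the inclusion $V_1\subseteq\mathfrak z(\gg)\cap\gg_{\bar 1}$ is Lemma \ref{lem: V1z(g)}, so only the reverse inclusion remains, and I would obtain it by a dimension count. Using $\mathfrak z(\gg)=[\gg,\gg]^{\perp}$ and evenness of $B$, an odd element $z$ is automatically orthogonal to $[\gg,\gg]_{\bar 0}$, hence $\mathfrak z(\gg)\cap\gg_{\bar 1}=\{z\in\gg_{\bar 1}:B(z,V_{m-1})=\{0\}\}$, i.e. the $B|_{\gg_{\bar 1}}$-orthogonal complement of $V_{m-1}$ inside $\gg_{\bar 1}$. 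Since $B|_{\gg_{\bar 1}}$ is non-degenerate, this complement has dimension $m-(m-1)=1=\dim V_1$, and combined with $V_1\subseteq\mathfrak z(\gg)\cap\gg_{\bar 1}$ it forces equality. The hard part is not any single estimate but the careful bookkeeping: pinning down $V_k=\mathcal{C}^{m-k}(\gg_{\bar 1})$, getting the super-signs right in $\mathfrak z(\gg)=[\gg,\gg]^{\perp}$, and the inductive flag-preservation by $\delta$; once these are in place both claims drop out. I would also note that $(ii)$ in fact uses only the quadratic (not the symplectic) structure, so an alternative is to establish $(ii)$ first and then deduce $(i)$ from the facts that $\delta$ maps the center into itself and $B$ vanishes on the line $V_1$.
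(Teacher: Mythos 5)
Your proof is correct, but it takes a genuinely different route from the paper's on both items. For (i), the paper works directly with the scalar $2$-cocycle identity: it picks $X\in\gg_{\bar 0}$ with $[X,e_2]=\lambda e_1$, $\lambda\neq 0$, expands $\omega(e_1,e_1)=\lambda^{-1}\omega([X,e_2],e_1)$ via the cocycle relation, and kills the remaining terms using $e_1\in\mathfrak z(\gg)$; neither $B$ nor $\delta$ enters. You instead prove that $\delta$ preserves the descending sequence $\mathcal C^{k}(\gg_{\bar 1})=V_{m-k}$ and reduce $\omega(V_1,V_1)$ to $B(V_1,V_1)=\{0\}$; this is sound, and the flag-invariance of $\delta$ is a reusable structural fact, though it costs you an induction the paper avoids. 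For (ii), the paper argues by hand inside the filiform flag: for each $i\geq 2$ it produces $X_i\in\gg_{\bar 0}$ with $[X_i,e_i]\notin V_{i-2}$, so no odd element outside $V_1$ is central. Your dimension count via $\mathfrak z(\gg)=[\gg,\gg]^{\perp}$, the evenness of $B$, the identification $[\gg,\gg]_{\bar 1}=[\gg_{\bar 0},\gg_{\bar 1}]=V_{m-1}$, and the non-degeneracy of $B|_{\gg_{\bar 1}\times\gg_{\bar 1}}$ is shorter and more conceptual (it uses only $\dim V_{m-1}=m-1$ rather than the full flag), at the price of invoking the standard identity $\mathfrak z(\gg)=[\gg,\gg]^{\perp}$, which you correctly justify from invariance and non-degeneracy. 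Your closing observations --- that (ii) needs only the quadratic structure, and that (i) could instead be deduced from (ii) via $\delta(\mathfrak z(\gg))\subseteq\mathfrak z(\gg)$ --- are also correct.
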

\begin{proof}
(i) 
Suppose $V_i =span_{\mathbb{K}}\{e_1,\dots,e_i\}$, for $1 \leq i \leq m$. Since   $[\gg_{\bar 0},V_{2}]= V_1$, there exists $X \in \gg_{\bar 0}$ such that $[X,e_2]=\lambda e_1$ with $\lambda \neq 0$. Since $V_1 \subset {\mathfrak z}(\gg)  \cap {\gg}_{\bar 1}$  (see \cite[Lemma 2.12]{quadraticFLSA}),  $[\gg_{\bar 0},V_{1}]= \{0\}$ and $\omega$ is skew-suppersymmetric on $\gg$, we have
\begin{align*}
\omega(e_1,e_1)&=\lambda^{-1}\omega(\lambda e_1,e_1) = \lambda^{-1}\omega([X,e_2],e_1) =\lambda^{-1} \omega(e_1,[X,e_2]) \\
&= \lambda^{-1}\bigl(\omega(e_2,[e_1,X]) +\omega(X,[e_2,e_1])\bigr) = 0.
\end{align*}

(ii) It was proved in \cite[Lemma 2.12]{quadraticFLSA} that $V_1 \subset {\mathfrak z}(\gg)  \cap {\gg}_{\bar 1}$, and since  $\gg$ is of filiform type  it can be checked that ${\mathfrak z}(\gg)  \cap {\gg}_{\bar 1} \subset V_1$ which conludes the proof. In fact, suppose as before $V_i =span_{\mathbb{K}}\{e_1,\dots,e_i\}$, for $1 \leq i \leq m$. Since $[\gg_{\bar 0},V_i]= V_{i-1}$, for $2 \leq i \leq m$ there exist $X_i \in \gg_{\bar 0}$ such that $[X_i,e_i]=\lambda_i e_i + \sum_{j=1}^{i-1} \lambda_j e_j$ with $\lambda_i \neq 0$. Consequently,   ${\mathfrak z}(\gg)  \cap {\gg}_{\bar 1} \subset span_{\mathbb{K}}\{e_1\}=V_1$.  
\end{proof}

\begin{rem}\label{Remark_ideal}
If $I$ is a graded ideal of $\gg$, in general the orthogonal of $I$ with respect to $\omega$, denoted by $I^{\perp}$, is not a graded ideal, it just satisfies $[I^{\perp} ,I^{\perp}] \subseteq I^{\perp}$. But if $I \subseteq {\mathfrak z}(\gg)$ then $I^{\perp}$ is a graded ideal. Indeed, for $X \in \gg$ we get $\omega([I,X],I^{\perp})=0$ and $\omega([I^{\perp},I],X) =0$. Since $\omega$ is scalar $2$-cocycle on $\gg$ we conclude that $\omega([X,I^{\perp}],I) =0$, that is, 
$[\gg ,I^{\perp}  ] \subseteq I^{\perp}$.
\end{rem}

Assume that  $\displaystyle ({\gg},B,\omega)$ is a quadratic
symplectic Lie superalgebra of   filiform type  with $dim({\gg}_{\bar 1})=m > 0$ and such that ${\gg}_{\bar 1}$ is a  \textit{filiform ${\gg}_{\bar 0}$-module}  with a flag  $${\gg}_{\bar 1}=V_m \supset \dots \supset V_1
\supset V_0.$$ Set that  $V_i :=span_{\mathbb{K}}\{e_1,\dots,e_i\}$, for $1 \leq i \leq m$.  Denote $I := V_1 = \mathbb{K}e_1$.\\

Let us recall now the concept of elementary odd double extensions for quadratic Lie algebras and superalgebras \cite{quadraticFLSA}. Thus, let $\displaystyle (\gg,B)$ be a quadratic Lie algebra with non-null center.
Let $\displaystyle X_0 \in {\mathfrak z}(\gg) \backslash \{0\}$ such that $B(X_0,X_0) = 0$.
We denote by $\displaystyle \KK e$ the $1$-dimension $\mathbb{Z}_2$-graded vector space such that $\displaystyle \KK e = {(\KK e)}_{\bar 1}$ and 
$\displaystyle \KK e^*$ its dual vector space.

Then the $\mathbb{Z}_2$-graded vector space $\displaystyle \widetilde{\gg} = {\widetilde{\gg}}_{\bar 0} \oplus {\widetilde{\gg}}_{\bar 1}$, where ${\widetilde{\gg}}_{\bar 0} = \gg$ and ${\widetilde{\gg}}_{\bar 1} = \KK e \oplus \KK e^*$, with the even
skew-symmetric bilinear map $\displaystyle [\cdot,\cdot] : \widetilde{\gg} \times
\widetilde{\gg} \longrightarrow \widetilde{\gg}$ defined by:
$\displaystyle
[e,e] = X_0, \; [e^*,\widetilde{\gg}] = \{0\}$, $\displaystyle
[e,X] =  -B(X,X_0)e^*$, 
$[X,Y] = [X,Y]_{\gg},$ for all $X,Y \in \gg,$ 
and the supersymmetric
bilinear form $\displaystyle \widetilde{B}: \widetilde{\gg}\times \widetilde{\gg} \longrightarrow \KK$ defined by: 
$\displaystyle
\widetilde{B}\mid_{{\gg}\times {\gg}} = B,
 \widetilde{B}(e^*,e) = 1,
 \widetilde{B}(\gg,e) = \widetilde{B}(\gg,e^*) = \{0\} $, $\widetilde{B}(e,e)=
\widetilde{B}(e^*,e^*) = 0$
is a quadratic Lie superalgebra of   filiform type  with $dim({\widetilde{\gg}}_{\bar 1})= 2$, such that ${\widetilde{\gg}}_{\bar 1}$ is a filiform ${\widetilde{\gg}}_{\bar 0}$-module with the flag given by: $${\widetilde{\gg}}_{\bar 1}={\widetilde{V}}_2 \supset  {\widetilde{V}}_1
\supset {\widetilde{V}}_0,$$ where ${\widetilde{V}}_2=\KK e \oplus \KK e^*$, 
${\widetilde{V}}_1 = \KK e^*$,
and ${\widetilde{V}}_0=\{0\}$. 
The quadratic Lie superalgebra $\displaystyle (\widetilde{\gg}=\KK e \oplus {\gg} \oplus \KK e^\ast,\widetilde{B})$ is called the \emph{elementary
odd double extension} of $\displaystyle (\gg,B)$ by the $1$-dimensional Lie superalgebra $\displaystyle
(\KK e)_{\bar 1}$ (by means of $\displaystyle X_0$).


\begin{thm} \label{elementary odd double extion}
Assume that $\displaystyle (\gg,B,\omega)$ is a quadratic symplectic Lie algebra. Let  $\displaystyle X_0 \in {\mathfrak z}(\gg) \backslash \{0\}$  such that $B(X_0,X_0) = 0$. Let $\delta$ be the unique invertible skew-symmetric derivation of $\displaystyle
(\gg,B)$ such that $\displaystyle \omega (X,Y) = B(\delta(X),Y)$ for all $X,Y \in \gg$. 
Let $\displaystyle (\widetilde{\gg} = \KK e \oplus \gg \oplus \KK e^*,\widetilde{B})$ be the elementary odd double extension of $\displaystyle (\gg,B)$ by the $1$-dimensional Lie superalgebra $\displaystyle
(\KK e)_{\bar 1}$ (by means of $\displaystyle X_0$). If there exists $\displaystyle \beta \in \KK \backslash \{0\}$ such that
\begin{eqnarray}\label{Eq5.1}
&&  \delta(X_0) =\beta X_0,
\label{eq1}
\end{eqnarray}
then the linear endomorphism  $\widetilde{\delta} : \widetilde{\gg} \to \widetilde{\gg}$ defined by
\begin{align*}
 \widetilde{\delta}(e^*) &:= -\frac{\beta}{2} e^*,\\
\widetilde{\delta}(e) &:= \frac{\beta}{2} e,\\
\widetilde{\delta}(X) &:= \delta(X), \; \mbox{ for} \: X \in \gg,
\end{align*}
is an even invertible skew-supersymmetric superderivation of
$\displaystyle (\widetilde{\gg},\widetilde{B}) $. Then, by defining $\displaystyle  \widetilde{\omega} (X,Y):=
\widetilde{B}(\widetilde{\delta}(X),Y)$ for $X,Y\in \widetilde{\gg}$ we obtain  $\displaystyle (\widetilde{\gg}, \widetilde{B}, \widetilde{\omega})$ which is a quadratic
symplectic Lie superalgebra of   filiform type, i.e. ${\widetilde{\gg}}_{\bar 1}$ is a  \textit{filiform ${\widetilde{\gg}}_{\bar 0}$-module} with the flag defined by $${\widetilde{\gg}}_{\bar 1} = {\widetilde V}_2 \supset {\widetilde V}_1
\supset {\widetilde V}_0,$$ where
$ {\widetilde V}_2 := \KK e \oplus \KK e^*$, 
$ {\widetilde V}_1 := \KK e^*$ and 
$ {\widetilde V}_0 := \{0\}$. 

In this case, the quadratic symplectic Lie superalgebra $\displaystyle (\widetilde{\gg},\widetilde{B},\widetilde{\omega})$ is called the elementary odd double extension of  the quadratic
symplectic Lie algebra $\displaystyle (\gg,B,\omega)$ by the $1$-dimensional Lie superalgebra $\displaystyle
(\KK e)_{\bar 1}$ (by means of $\displaystyle X_0$ and $\beta$). \label{teo:asdfg234}
\end{thm}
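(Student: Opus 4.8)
The plan is to verify directly that the prescribed map $\widetilde{\delta}$ is an even, invertible, skew-supersymmetric superderivation of $(\widetilde{\gg},\widetilde{B})$, and then to invoke the characterization recalled at the beginning of Section~4 (\cite[Proposition~4.2]{simplecticLSA}): on a quadratic Lie superalgebra, such a superderivation produces, via $\widetilde{\omega}(X,Y):=\widetilde{B}(\widetilde{\delta}(X),Y)$, a symplectic structure. That $(\widetilde{\gg},\widetilde{B})$ is already a quadratic Lie superalgebra of filiform type with the flag $\widetilde{V}_2\supset\widetilde{V}_1\supset\widetilde{V}_0$ is furnished by the elementary odd double extension construction recalled just above the statement, so this part requires no further work; all the content lies in checking the properties of $\widetilde{\delta}$.

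The first two properties are immediate. Since $\widetilde{\delta}$ maps $\widetilde{\gg}_{\bar 0}=\gg$ into itself (through $\delta$) and stabilizes $\widetilde{\gg}_{\bar 1}=\KK e\oplus\KK e^*$ (scaling $e$ and $e^*$), it is even; and as $\delta$ is invertible on $\gg$ while $\widetilde{\delta}$ acts by the nonzero scalars $\frac{\beta}{2}$ and $-\frac{\beta}{2}$ on $e$ and $e^*$, it is invertible. For the remaining two properties I would run a finite case check over the pairs drawn from the three generating families $\gg$, $\KK e$, $\KK e^*$, for both the even superderivation identity $\widetilde{\delta}([U,V])=[\widetilde{\delta}(U),V]+[U,\widetilde{\delta}(V)]$ and the skew-supersymmetry $\widetilde{B}(\widetilde{\delta}(U),V)=-\widetilde{B}(U,\widetilde{\delta}(V))$.

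Most cases are automatic: on $\gg\times\gg$ both identities reduce to the fact that $\delta$ is a skew-symmetric derivation of $(\gg,B)$; every pair involving $e^*$ collapses because $[e^*,\widetilde{\gg}]=\{0\}$ and $\widetilde{B}(e^*,e^*)=0$; the skew-supersymmetry on mixed even--odd pairs vanishes termwise since $\widetilde{B}(\gg,e)=\widetilde{B}(\gg,e^*)=\{0\}$, while on the pair $\{e,e^*\}$ it follows from supersymmetry of $\widetilde{B}$ (note $\widetilde{B}(e,e^*)=-\widetilde{B}(e^*,e)=-1$). The two cases that genuinely use the hypotheses are $[e,e]=X_0$ and $[e,X]=-B(X,X_0)e^*$: for the former the identity becomes $\delta(X_0)=\widetilde{\delta}([e,e])=2[\widetilde{\delta}(e),e]=\beta X_0$, which is exactly Condition~\eqref{Eq5.1}; for the latter it reduces to $B(\delta(X),X_0)=-\beta B(X,X_0)$, which follows by combining the skew-symmetry $B(\delta(X),X_0)=-B(X,\delta(X_0))$ with $\delta(X_0)=\beta X_0$.

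This last computation is the crux, and it is exactly what pins down the eigenvalues $+\frac{\beta}{2}$ on $e$ and $-\frac{\beta}{2}$ on $e^*$: these are the unique scalars making the $[e,e]$ and $[e,X]$ compatibility conditions hold simultaneously. Once $\widetilde{\delta}$ is established as an even invertible skew-supersymmetric superderivation, \cite[Proposition~4.2]{simplecticLSA} gives that $\widetilde{\omega}$ is an even, nondegenerate, skew-supersymmetric scalar $2$-cocycle, hence a symplectic structure, so $(\widetilde{\gg},\widetilde{B},\widetilde{\omega})$ is the asserted quadratic symplectic Lie superalgebra of filiform type. There is no real obstacle beyond careful bookkeeping of the super-signs; the only subtlety is keeping the value $\widetilde{B}(e,e^*)=-1$ straight when checking the $\{e,e^*\}$ pair.
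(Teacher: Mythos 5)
Your proposal is correct and follows essentially the same route as the paper's own proof: both reduce everything to a direct case-by-case verification that $\widetilde{\delta}$ is an even invertible skew-supersymmetric superderivation of $(\widetilde{\gg},\widetilde{B})$, with the only substantive cases being $[e,e]$ and $[e,X]$, each of which reduces to the hypothesis $\delta(X_0)=\beta X_0$ via skew-symmetry of $\delta$. The quadratic filiform-type structure of $\widetilde{\gg}$ is likewise taken from the elementary odd double extension construction in both arguments, so nothing is missing.
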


\begin{proof} We have just to see that $\displaystyle \widetilde{\delta}$ is an even invertible skew-supersymmetric superderivation of $\displaystyle (\gg,\widetilde{B})$. Since
$\delta$ is invertible it is clear that $\displaystyle \widetilde{\delta}$ is also invertible. We start by showing that $\displaystyle \widetilde{\delta}$ is an even superderivation of $\displaystyle \widetilde{\gg}$. We have to prove in particular that $\displaystyle
\widetilde{\delta}([X,Y]) = [\widetilde{\delta}(X),Y] + [X,\widetilde{\delta}(Y)]$ for all $X,Y \in \gg$.
Computing the terms of the expression above, we obtain
\begin{eqnarray*}
\displaystyle && \widetilde{\delta}([X,Y]) \;=\; \delta([X,Y]_{\gg}),\\
\hspace*{0cm} && [\widetilde{\delta}(X),Y] \;=\; [\delta (X),Y]_{\gg},\\
\hspace*{0cm}&&[X,\widetilde{\delta}(Y)] \;=\; [X,\delta(Y)]_{\gg}.
 \end{eqnarray*}
As $\delta$ is skew symmetric,
we conclude that the equation above is equivalent to Eq. \eqref{Eq5.1}. 
Moreover, we have to show that
\begin{eqnarray}
\displaystyle \widetilde{\delta}([e,X]) &=& [\widetilde{\delta}(e),X] + [e,\widetilde{\delta}(X)], \hspace*{0.2cm} \mbox{for all } X \in \gg. \label{eq2}
\end{eqnarray}
Once more, calculating the terms, we obtain
\begin{eqnarray*}
\displaystyle && \widetilde{\delta}([e,X]) \;=\;  B(X,X_0)\frac{\beta}{2} e^*,\\
\hspace*{0cm} && [\widetilde{\delta}(e),X] \;=\;  - \frac{\beta}{2} B(X,X_0) e^*,\\
\hspace*{0cm} && [e,\widetilde{\delta}(X)] \;=\; -B(\delta(X),X_0)e^*.
\end{eqnarray*}
Then \eqref{eq2} is equivalent to  $$\beta B(X,X_0) + B(\delta(X),X_0) = 0.$$ 
As $\delta$ is skew-symmetric, doing some
calculations we conclude that the last assertion is expression \eqref{eq1}. Further, we easily verify that $\displaystyle
\widetilde{\delta}([e,e]) = [\widetilde{\delta}(e),e] + [e,\widetilde{\delta}(e)]$ is equivalent to \eqref{eq1}. Due to $\displaystyle
[e^*,\gg] = \{0\}$, it is immediate that $\displaystyle \widetilde{\delta}([e^*,X])=[\widetilde{\delta}(e^*),X] + [e^*,\widetilde{\delta}(X)]$, where $\displaystyle X=e$,
$\displaystyle X=e^*$, and $X \in \gg$. Finally, it can be easily checked that the even superderivation $\displaystyle \widetilde{\delta}$ is skew-supersymmetric since $\delta$ is a skew-symmetric superderivation of $\displaystyle (\gg,B)$ and $\displaystyle
\widetilde{B}\mid_{\gg \times \gg} = B,
\widetilde{B}(e^*,e) = 1,
\widetilde{B}(\gg,e) = \widetilde{B}(\gg,e^*) = \{0\}$, $\widetilde{B}(e,e) = \widetilde{B}(e^*,e^*) = 0$.
\end{proof}

\begin{prop} \label{prop4.1}
Let $(\gg,B,\omega)$ be a quadratic symplectic
Lie superalgebra of filiform type  and $\delta$ the unique even invertible skew-supersymmetric superderivation of $(\gg,B)$
such that $\omega (X,Y)= B(\delta(X),Y)$, for all $X,Y\in \gg$. Consider $D$ an even skew-super\-symmetric superderivation of $(\gg,B)$ and $(\widetilde{\gg} = \KK e \oplus \gg \oplus \KK e^*,\widetilde{B}) $ the double extension of $(\gg,B) $ by the $1$-dimensional Lie algebra $\KK e$ (by means of $D$). If
there exist $\alpha \in \KK \backslash \{0\}$ and $A_0 \in \gg_{\bar 0}$ such that
\begin{eqnarray}
\displaystyle  && [\delta,D]+\alpha D = {\rm ad}_{A_0},
\label{eq:nhju}
\end{eqnarray}
then the linear endomorphism  $\displaystyle \widetilde{\delta}: \widetilde{\gg}\longrightarrow \widetilde{\gg}$ defined by
\begin{eqnarray*}
\displaystyle && \widetilde{\delta}(e^\ast) \;=\;  \alpha e^\ast,\\
\hspace*{0cm}&& \widetilde{\delta}(e) \;=\; -\alpha e+A_0,\\
\hspace*{0cm}&&\widetilde{\delta}(X) \;=\;
\delta(X)-B(X,A_0)e^\ast,\hspace*{0.5cm}\forall \: { X \in \gg},
\end{eqnarray*}
is an even invertible skew-supersymmetric superderivation of
$\displaystyle(\widetilde{\gg},\widetilde{B})$. Consequently,
$\displaystyle (\widetilde{\gg},\widetilde{B},\widetilde{\omega}) $ is a
quadratic symplectic Lie superalgebra, where $\displaystyle
\widetilde{\omega}:\widetilde{\gg}\times \widetilde{\gg}\longrightarrow \KK$ is defined by
\begin{eqnarray*}
\displaystyle  \widetilde{\omega} (X,Y)&=&
\widetilde{B}(\widetilde{\delta}(X),Y),\hspace*{0.5cm}\forall \: { X,Y\in
\widetilde{\gg}}.
\end{eqnarray*}
In this case, the quadratic  symplectic Lie superalgebra
$\displaystyle (\widetilde{\gg},\widetilde{B},\widetilde{\omega}) $ is of   filiform type. It is also
called the quadratic symplectic double extension \index{quadratic
symplectic double extension!of a quadratic symplectic Lie
superalgebra} of $\displaystyle ({\gg},B,\omega) $ by the
one-dimensional Lie algebra $\displaystyle \KK e$ (by means of
$D$, $\displaystyle A_{0} $, and $\alpha$). \label{teo:asdfg}
\end{prop}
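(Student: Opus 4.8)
The plan is to invoke the characterization recalled at the beginning of Section~4 (see \cite[Proposition 4.2]{simplecticLSA}): since $(\widetilde{\gg},\widetilde{B})$ is already a quadratic Lie superalgebra (being a double extension, by Theorem~\ref{thm_Doubleextension}), it suffices to prove that $\widetilde{\delta}$ is an \emph{even, invertible, skew-supersymmetric} superderivation of $(\widetilde{\gg},\widetilde{B})$; then $\widetilde{\omega}(X,Y)=\widetilde{B}(\widetilde{\delta}(X),Y)$ is automatically a symplectic structure and $(\widetilde{\gg},\widetilde{B},\widetilde{\omega})$ is quadratic symplectic. Evenness is clear from the definition of $\widetilde{\delta}$, which preserves $\widetilde{\gg}_{\bar 0}=\KK e\oplus\gg_{\bar 0}\oplus\KK e^*$ and $\widetilde{\gg}_{\bar 1}=\gg_{\bar 1}$. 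Invertibility is immediate: in the ordered decomposition $(\KK e,\gg,\KK e^*)$ the operator $\widetilde{\delta}$ is block triangular with diagonal blocks $-\alpha$, $\delta$ and $\alpha$, so $\det\widetilde{\delta}=-\alpha^2\det\delta\neq 0$ because $\alpha\neq 0$ and $\delta$ is invertible.

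The computational core is the superderivation identity $\widetilde{\delta}([U,V])=[\widetilde{\delta}(U),V]+[U,\widetilde{\delta}(V)]$ (no sign, since $\widetilde{\delta}$ is even), which I would verify case by case using the double-extension brackets $[e,X]=D(X)$, $[X,Y]=[X,Y]_{\gg}+B(D(X),Y)e^*$ and $[e^*,\widetilde{\gg}]=\{0\}$. All cases involving $e^*$ are trivial (it is central and $\widetilde{\delta}(e^*)=\alpha e^*$), and the case $U=V=e$ holds by antisymmetry on even elements. The two essential cases are $U=X,\,V=Y\in\gg$ and $U=e,\,V=X\in\gg$. In both, the $\gg$-components match because $\delta$ is a superderivation of $\gg$ together with the operator identity \eqref{eq:nhju}, while the $\KK e^*$-components match thanks to \eqref{eq:nhju} combined with the skew-supersymmetry of $\delta$ and $D$ and the invariance/supersymmetry of $B$. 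Concretely, for $X,Y\in\gg$ the $e^*$-component of $[\widetilde{\delta}(X),Y]+[X,\widetilde{\delta}(Y)]$ equals $B(X,[\delta,D](Y))$ (after transporting $D$ and $\delta$ across $B$), whereas the $e^*$-component of $\widetilde{\delta}([X,Y])$ equals $B(X,({\rm ad}_{A_0}-\alpha D)(Y))$; these agree exactly by \eqref{eq:nhju}. I expect this cancellation of the $e^*$-terms to be the main obstacle, as it is the step where hypothesis \eqref{eq:nhju} is genuinely needed and where the super-signs must be tracked carefully.

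Next I would check skew-supersymmetry, i.e.\ $\widetilde{B}(\widetilde{\delta}(U),V)=-\widetilde{B}(U,\widetilde{\delta}(V))$, which is a routine verification on the blocks using $\widetilde{B}|_{\gg\times\gg}=B$, $\widetilde{B}(e^*,e)=1$, the skew-supersymmetry of $\delta$ and the supersymmetry of $B$ (for instance the pairs $(e,X)$ and $(e^*,e)$ reduce respectively to $B(A_0,X)=B(X,A_0)$ and to $\alpha=\alpha$). Having established that $\widetilde{\delta}$ is an even invertible skew-supersymmetric superderivation, the quadratic symplectic conclusion follows from \cite[Proposition 4.2]{simplecticLSA}.

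Finally, for the filiform-type claim, I note that $\widetilde{\gg}_{\bar 1}=\gg_{\bar 1}$ because the new generators $e,e^*$ are even. Since $(\widetilde{\gg},\widetilde{B},\widetilde{\omega})$ is quadratic symplectic it is nilpotent by Corollary~\ref{cor}, whence the action of $e$ on $\widetilde{\gg}_{\bar 1}$, namely $D$, is nilpotent (this is precisely Lemma~\ref{thm3}). On the other hand, every even superderivation preserves the terms $\mathcal{C}^k(\gg_{\bar 1})$ of the descending central sequence, so $D(V_j)\subseteq V_j$ for each member $V_j=\mathcal{C}^{m-j}(\gg_{\bar 1})$ of the original filiform flag. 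Because each quotient $V_j/V_{j-1}$ is one-dimensional (Lemma~\ref{lem}), the nilpotent $D$ induces the zero endomorphism on it, which forces the strict inclusion $D(V_j)\subseteq V_{j-1}$. Consequently $[\widetilde{\gg}_{\bar 0},V_{j+1}]=[\gg_{\bar 0},V_{j+1}]+D(V_{j+1})=V_j$, so the same flag $\gg_{\bar 1}=V_m\supset\dots\supset V_0$ exhibits $\widetilde{\gg}_{\bar 1}$ as a filiform $\widetilde{\gg}_{\bar 0}$-module, and therefore $(\widetilde{\gg},\widetilde{B},\widetilde{\omega})$ is of filiform type.
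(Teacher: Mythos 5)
Your proposal is correct and follows essentially the same route as the paper: the paper simply delegates the verification that $\widetilde{\delta}$ is an even invertible skew-supersymmetric superderivation to \cite[Theorem 4.7]{simplecticLSA}, and the filiform-type claim to Lemma~\ref{thm3} (nilpotency of $D$) combined with \cite[Proposition 5.1 (i)]{quadraticFLSA}. Your in-line computations (the cancellation of the $e^*$-components via \eqref{eq:nhju}, and the argument that the nilpotent $D$ maps $V_j$ into $V_{j-1}$ so the original flag still works) are exactly the content of those two citations, carried out explicitly and correctly.
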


\begin{proof}
Thanks to \cite[Theorem 4.7]{simplecticLSA}, it remains to check that $\widetilde{\gg}$ is of filiform type. Using Theorem \ref{thm3} we deduce that $D$ is nilpotent, then from \cite[Proposition 5.1 (i)]{quadraticFLSA} we deduce that if $\gg$ is of filiform type
and the matrix associated with the linear map $D$ is nilpotent, then
the double extension Lie superalgebra $\widetilde{\gg}$ is nilpotent with a filiform
$\widetilde{\gg}_{\bar 0}$-module, which concludes the proof of the statement.
\end{proof}

\begin{exa}
For the  quadratic symplectic Lie superalgebra of filiform type  $(\gg_{4,1}^s, B, \omega)$, it has been shown in  Proposition \ref{prop3.3}  that all its double extensions  by the $1$-dimensional Lie algebra $(\mathbb{C}e)_{\bar 0}$ by means of $\psi : \mathbb{C}e \to {\rm Der}_a (\gg_{4,1}^s, B),$ defined by $\psi(e)=D,$ and $D$ an even nilpotent skew-supersymmetric superderivation of $(\gg_{4,1}^s, B)$, are  symplectic quadratic Lie superalgebras whose law is isomorphic to one superalgebra of the following family: 
$$\begin{array}{l}
[e,Y_1]=-[Y_1,e]=b_2Y_2 \\{}
[X_1,Y_1]=-[Y_1,X_1]=-2Y_2 \\{}
[Y_1,Y_1]=-2X_0+b_2e^*
\end{array}$$
where $b_2$ is a non-zero parameter in $\CC$, with respect to the basis $\{X_0,X_1,e,e^*,Y_1,Y_2\}$, where $\{X_0,X_1,e,e^*\}$ is a basis of its even part and $\{Y_1,Y_2\}$ is a basis of its odd part. It can be easily checked that this family is of filiform type with respect to the same flag as $\gg_{4,1}^s$:
$$ V_2 \supset V_1 \supset  V_0$$  where $V_2:=span_{\CC}\{Y_1,Y_2\}$, $V_1:=span_{\CC}\{Y_2\}$ and $V_0:=\{ 0\}$. Moreover, for any $\displaystyle \alpha \in \mathbb{C} \backslash \{0\}$ with $\alpha \neq 2b_1$ and $b_1\neq 0$ there exists 
$\displaystyle A_0 \in \gg_{\bar 0}$ such that $[\delta,D]+\alpha D =  \mbox{ad } (A_0)$. In particular $A_0 = \frac{(-2b_1+\alpha)b_2}{-2}X_1$ and the linear endomorphism  $\widetilde{\delta}: \widetilde{\gg}\to \widetilde{\gg}$ defined by
\begin{eqnarray*}
\displaystyle && \widetilde{\delta}(e^\ast) \;:=\; \alpha e^\ast,\\
\hspace*{0cm}&& \widetilde{\delta}(e) \;:=\; -\alpha e + \Bigl(b_1-\frac{\alpha}{2}\Bigr)b_2X_1, \\
\hspace*{0cm}&&\widetilde{\delta}(X_0) \;:=\;
2b_1X_0 + \Bigl(b_1-\frac{\alpha}{2}\Bigr)b_2 e^\ast, \\
 \hspace*{0cm}&&\widetilde{\delta}(X_1) \;:=\;
 -2b_1X_1,\\
 \hspace*{0cm}&&\widetilde{\delta}(Y_1) \;:=\;
 b_1Y_1+b_2Y_2,\\
 \hspace*{0cm}&&\widetilde{\delta}(Y_2) \;:=\;
 -b_1Y_2,
\end{eqnarray*}
is an even invertible skew-supersymmetric superderivation of
$\displaystyle(\widetilde{\gg},\widetilde{B})$.
\end{exa}

\begin{rem}
Let us note that the converse of Proposition \ref{prop4.1} does not hold in general. Thus, we can start from a quadratic symplectic Lie superalgebra without the structure of filiform type and obtain a quadratic symplectic one but of filiform type by choosing a suitable superderivation $D$ (even nilpotent skew-supersymmetric).
\end{rem}

Next, we continue with generalized quadratic symplectic double extensions of  quadratic symplectic Lie superalgebras of filiform type by the one-dimensional Lie superalgebra with the even part zero.

\begin{prop} \label{prop4.2}
Assume that  $\displaystyle ({\gg},B,\omega)$ is a quadratic
symplectic Lie superalgebra of filiform type with $dim({\gg}_{\bar 1})=m > 0$ and such that ${\gg}_{\bar 1}$ is a  \textit{filiform ${\gg}_{\bar 0}$-module}  with a flag ${\gg}_{\bar 1}=V_m \supset \dots \supset V_1 \supset V_0.$ Set that $V_1= \KK e_1$ and 
$V_m \backslash V_{m-1}=\mathbb{K}e_m$. Let $\displaystyle \delta$ be the unique even invertible  skew-supersymmetric superderivation of $\displaystyle
(\gg,B)$ such that $\displaystyle  \omega (X,Y)= B(\delta(X),Y)$,
$\displaystyle   \forall \: { X,Y\in \gg}$. Consider
$\displaystyle D$ an odd skew-supersymmetric superderivation of
$\displaystyle (\gg,B)$ and $\displaystyle X_0$ a non-zero
ele\-ment of $\displaystyle {\gg}_{\bar{ 0}}$ such that $$D(X_0) = 0, \;  B(X_0,X_0)= 0, \; D^2  = \frac{1}{2}[X_0,\cdot]_{\gg}, \mbox{ and } e_m \in D(\gg_{\bar 0}).$$

Let $(\widetilde{\gg} =\KK e \oplus \gg \oplus \KK e^*,\widetilde{B})$ be the generalized double
extension of $(\gg,B)$ by the $1$-dimensional Lie superalgebra $\displaystyle (\KK e)_{\bar 1}$ (by means of $D$ and $\displaystyle X_0$). If there exist
$\displaystyle \alpha \in \mathbb{K} \backslash \{0\}$,
$\displaystyle \mu \in \mathbb{K}$, and $\displaystyle A_1 \in
\gg_{\bar 1}$ such that
\begin{eqnarray}
\displaystyle  && [\delta,D]+\alpha D =  {\rm ad}_{A_1},
\label{eq:nhju113}\\
&&  D(A_1) = \alpha X_0 + \frac{1}{2} \delta(X_0),
\label{eq:nhju114}
\end{eqnarray}
then the linear endomorphism  $\displaystyle \widetilde{\delta}: \widetilde{\gg}\longrightarrow \widetilde{\gg}$ defined by
\begin{eqnarray*}
\displaystyle && \widetilde{\delta}(e^\ast) \;=\;  \alpha e^\ast,\\
\hspace*{0cm}&& \widetilde{\delta}(e) \;=\; \mu e^\ast+A_1-\alpha e,\\
\hspace*{0cm}&&\widetilde{\delta}(X) \;=\;
\delta(X)-B(X,A_1)e^\ast,\hspace*{0.5cm}\forall \: { X \in \gg},
\end{eqnarray*}
is an even invertible skew-supersymmetric superderivation of
$\displaystyle (\widetilde{\gg},\widetilde{B}) $. Then, by defining $\displaystyle  \widetilde{\omega} (X,Y):=
\widetilde{B}(\widetilde{\delta}(X),Y), \forall X,Y\in \widetilde{\gg}$ we obtain  $\displaystyle (\widetilde{\gg},\widetilde{B},\widetilde{\omega})$ which is a quadratic
symplectic Lie superalgebra of filiform type, i.e. ${\widetilde{\gg}}_{\bar 1}$ is a \textit{filiform ${\widetilde{\gg}}_{\bar 0}$-module}  with the flag defined by: $${\widetilde{\gg}}_{\bar 1} = {\widetilde{V}}_{m+2} \supset \dots \supset {\widetilde{V}}_1
\supset {\widetilde{V}}_0,$$ where
$ {\widetilde{V}}_{m+2} = \mathbb{K}e \oplus V_{m} \oplus \mathbb{K}e^\ast$, 
$ {\widetilde{V}}_i = V_{i-1}\oplus \mathbb{K}e^\ast$, for $1\leq i\leq m+1$, and ${\widetilde{V}}_0=\{0\}$. 

In this case, the quadratic symplectic Lie superalgebra $\displaystyle (\widetilde{\gg},\widetilde{B},\widetilde{\omega})$ is called the gene\-ralized quadratic symplectic double extension\index{generalized quadratic symplectic double!extension of a quadratic symplectic Lie superalgebra} of $\displaystyle (\gg,B,\omega)$ by the one-dimensional Lie superalgebra
$\displaystyle (\mathbb{K}e)_{{\bar 1}}$ (by means of $\delta$, $\displaystyle X_{0} $, $\displaystyle A_1$, $\alpha$, and $\mu$). \label{teo:asdfg234}
\end{prop}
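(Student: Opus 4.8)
The plan is to reduce the entire statement to verifying that the displayed map $\widetilde{\delta}$ is an even, invertible, skew-supersymmetric superderivation of $(\widetilde{\gg},\widetilde{B})$. Once this is established, the characterization recalled at the beginning of Section~4 (see \cite[Proposition 4.2]{simplecticLSA}) immediately gives that $(\widetilde{\gg},\widetilde{B},\widetilde{\omega})$, with $\widetilde{\omega}(X,Y):=\widetilde{B}(\widetilde{\delta}(X),Y)$, is a quadratic symplectic Lie superalgebra, since $\widetilde{\gg}$ is already quadratic as a generalized double extension. That $\widetilde{\delta}$ is even is clear from the grading, recalling $A_1\in\gg_{\bar 1}$ and $e,e^*$ odd, so that $B(X,A_1)e^*$ has the parity of $X$. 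Invertibility follows by writing $\widetilde{\delta}$ in block form with respect to the decomposition $\KK e\oplus\gg\oplus\KK e^*$: the resulting matrix is block-triangular with determinant $-\alpha^2\det(\delta)\neq 0$, since $\alpha\neq 0$ and $\delta$ is invertible. Skew-supersymmetry is then a short check on each type of homogeneous pair, reducing to the skew-symmetry of $\delta$ on $\gg$ together with $\widetilde{B}(e,e^*)=-1$ and the (super)symmetry of $B$; I expect $\mu$ to drop out here as well.

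The substantive algebraic part is the superderivation identity $\widetilde{\delta}([U,V])=[\widetilde{\delta}(U),V]+[U,\widetilde{\delta}(V)]$ (no sign, as $\widetilde{\delta}$ is even), which I would verify case by case on homogeneous $U,V$ ranging over $\gg$, $e$, and $e^*$, using the brackets of the generalized double extension. For $U,V\in\gg$ the $\gg$-component reduces to $\delta$ being a superderivation of $\gg$, while the $e^*$-component, after applying the invariance and (super)symmetry of $B$, collapses exactly to Condition \eqref{eq:nhju113}, namely $[\delta,D]+\alpha D={\rm ad}_{A_1}$. The pair $U=e$, $V\in\gg$ reproduces \eqref{eq:nhju113} in its $\gg$-component, whereas its $e^*$-component forces Condition \eqref{eq:nhju114}, $D(A_1)=\alpha X_0+\frac{1}{2}\delta(X_0)$; the same relation \eqref{eq:nhju114} reappears (with a factor $2$) from the pair $U=V=e$, since $\widetilde{\delta}([e,e])=\delta(X_0)$ must match $[\widetilde{\delta}(e),e]+[e,\widetilde{\delta}(e)]=2D(A_1)-2\alpha X_0$. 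The cases involving $e^*$ are immediate because $e^*$ is central and $\widetilde{\delta}(e^*)=\alpha e^*$. Thus the two hypotheses \eqref{eq:nhju113}–\eqref{eq:nhju114} are precisely what the superderivation condition demands, and nothing stronger.

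It then remains to prove that $(\widetilde{\gg},\widetilde{B},\widetilde{\omega})$ is of filiform type with the asserted flag. Since $\widetilde{\gg}_{\bar 0}=\gg_{\bar 0}$ and $\widetilde{\gg}_{\bar 1}=\KK e\oplus\gg_{\bar 1}\oplus\KK e^*$, the dimension count $\dim\widetilde{V}_i=i$ is immediate from $V_0=\{0\}$ and $\dim V_{i-1}=i-1$, so by the definition of filiform module it suffices to check $[\widetilde{\gg}_{\bar 0},\widetilde{V}_{i+1}]=\widetilde{V}_i$ for every $i$. The crucial point, which is where the hypotheses genuinely enter, is that $\KK e^*\subseteq[\gg_{\bar 0},V_i]_{\widetilde{\gg}}$ for each $i\geq 1$: indeed $e_1\in V_1=\mathfrak z(\gg)\cap\gg_{\bar 1}$ by Lemma~\ref{lem1}(ii), so $[X,e_1]_{\widetilde{\gg}}=-B(D(X),e_1)e^*$, and choosing $X\in\gg_{\bar 0}$ with $D(X)=e_m$ (possible since $e_m\in D(\gg_{\bar 0})$) yields $B(D(X),e_1)=B(e_m,e_1)\neq 0$, where $B(e_m,e_1)\neq 0$ follows from the non-degeneracy of $B$ combined with the orthogonality $B(V_1,V_{m-1})=\{0\}$ of Lemma~\ref{lem: V1Vi}. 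Granting this, for $1\leq i\leq m$ the $\gg_{\bar 1}$-projection of $[\gg_{\bar 0},V_i]_{\widetilde{\gg}}$ covers $V_{i-1}$ exactly as in $\gg$, and the $e^*$-direction is already present, so $[\widetilde{\gg}_{\bar 0},\widetilde{V}_{i+1}]=V_{i-1}\oplus\KK e^*=\widetilde{V}_i$; at the top level $[\widetilde{\gg}_{\bar 0},\widetilde{V}_{m+2}]$ additionally contains the brackets $[X,e]_{\widetilde{\gg}}=-D(X)+B(X,X_0)e^*$, whose $\gg_{\bar 1}$-projection $D(\gg_{\bar 0})\ni e_m$ supplies the missing direction $\KK e_m$, so the projection fills out all of $V_m$ and $[\widetilde{\gg}_{\bar 0},\widetilde{V}_{m+2}]=V_m\oplus\KK e^*=\widetilde{V}_{m+1}$.

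The main obstacle is not any one computation but the bookkeeping: the superderivation check must be arranged so that exactly \eqref{eq:nhju113} and \eqref{eq:nhju114} emerge from the $e^*$-components, which hinges on a careful use of the skew-(super)symmetry of $\delta$ and $D$ and of the invariance of $B$. For the filiform claim the delicate input is the non-vanishing $B(e_m,e_1)\neq 0$, which is precisely what ties the hypothesis $e_m\in D(\gg_{\bar 0})$ to the appearance of $e^*$ at the bottom of the flag; without it one could not guarantee the equality $\KK e^*=\widetilde{V}_1=[\widetilde{\gg}_{\bar 0},\widetilde{V}_2]$, and the flag would collapse.
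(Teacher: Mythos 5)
Your argument is correct, but it is organized quite differently from the paper's: the paper disposes of this proposition in one line, by citing Theorem 6.5 of \cite{quadraticFLSA} (which supplies the filiform flag of the generalized double extension) together with Theorem 4.9 of \cite{simplecticLSA} (which supplies the statement that $\widetilde{\delta}$ is an even invertible skew-supersymmetric superderivation under conditions \eqref{eq:nhju113}--\eqref{eq:nhju114}), whereas you unpack both ingredients and verify them directly. Your case analysis of the superderivation identity is accurate: the $e^*$-components for $U,V\in\gg$ reduce, via invariance of $B$ and skew-supersymmetry of $\delta$ and $D$, to $B\bigl(([\delta,D]+\alpha D-\mathrm{ad}_{A_1})(U),V\bigr)=0$, i.e.\ to \eqref{eq:nhju113}; the pair $(e,X)$ yields \eqref{eq:nhju113} in $\gg$ and \eqref{eq:nhju114} in the $e^*$-direction; and $(e,e)$ gives $\delta(X_0)=2D(A_1)-2\alpha X_0$, again \eqref{eq:nhju114}. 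One small imprecision: $\mu$ does not ``drop out'' of the skew-supersymmetry check so much as it is automatically consistent, since $\widetilde{B}(\widetilde{\delta}(e),e)=\mu$ matches $-\widetilde{B}(e,\widetilde{\delta}(e))=-\mu\,\widetilde{B}(e,e^*)=\mu$ because $\widetilde{B}(e,e^*)=-\widetilde{B}(e^*,e)=-1$; and $\mu e^*$ is invisible in all brackets because $e^*$ is central. Your filiform argument is exactly the mechanism the paper itself uses in the converse direction (proof of Theorem \ref{pro:convasdfg234}(ii)): $B(e_1,\gg_{\bar 0}\oplus V_{m-1})=\{0\}$ by evenness of $B$ and Lemma \ref{lem: V1Vi}, so non-degeneracy forces $B(e_1,e_m)\neq 0$, and combined with $e_m\in D(\gg_{\bar 0})$ and $e_1\in{\mathfrak z}(\gg)$ (Lemma \ref{lem1}) this pins $\KK e^*$ at the bottom of the flag and $\KK e_m$ at the top. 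The trade-off is the usual one: the paper's citation is economical but opaque, while your verification is self-contained and makes visible exactly where each hypothesis, in particular $e_m\in D(\gg_{\bar 0})$, is used.
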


\begin{proof} The result derives from  Theorem $6.5$ of \cite{quadraticFLSA}  together with Theorem $4.9$ of \cite{simplecticLSA}.  
\end{proof}

\begin{exa}
Consider the complex quadratic symplectic Lie superalgebras of filiform type $(\gg_{4,1}^s, B, \omega)$ with $\gg_{4,1}^s$  and $B$ as described in Proposition \ref{prop3.1} and from all the possible $\omega$ we consider the infinitely many $\omega$ that follows
$$\omega(X_0,X_1)=2b_1, \quad \omega(Y_2,Y_1)=-b_1,  \quad b_1 \in \CC, \ b_1 \neq 0.$$
That is, for each value of $b_1$ we consider $\displaystyle  \omega (X,Y)= B(\delta(X),Y)$,
$\displaystyle   \forall \: { X,Y\in \gg_{4,1}^s}$ with $\delta$ the even invertible skew-supersymmetric superderivation defined by:
$$\delta(X_1)=-2b_1X_1, \ \delta(Y_1)=b_1Y_1, \ \delta(X_0)=2b_1X_0, \ \delta(Y_2)=-b_1Y_2, \ b_1 \neq 0.$$

We consider the odd skew-supersymmetric superderivations $D$ defined by 
$$D(X_1)=a_1Y_1, \quad D(X_0)=0, \quad D(Y_1)=0, \quad D(Y_2)=a_1X_0, \ a_1 \neq 0.$$
Let us note that all them verify the conditions required in  order to be able to apply the generalized double extension, i.e. $  D(X_0)  =  0, \;
  B(X_0,X_0) = 0, \;
 D^2  = \frac{1}{2}[X_0,.]_{\gg}$, in particular $D^2$ is the null derivation which is the same as $\frac{1}{2}ad_{X_0}$ since $X_0$ is a central element of the superalgebra. Moreover we have the extra condition for having a double extension of filiform type, i.e. $e_m \in D({\gg}_{{\bar 0}})$ since for us $e_m=Y_1$. It can be easily checked that $\alpha=-3b_1$, $A_1=Y_2$ and $a_1=-2b_1$ verify 
 $$[\delta,D]+\alpha D = \mbox{ad } (A_1) \quad \mbox{ and } \quad D(A_1) = \alpha X_0 + \frac{1}{2} \delta(X_0).$$
Thus, we have a one-parameter family of quadratic symplectic Lie superalgebras of filiform type: 
$$\begin{array}{l}
[e,e]=X_0 \\{}
[e,X_1]=-[X_1,e]=2b_1Y_1-e^{*}  \\{}
[e,Y_2]=[Y_2,e]=-2b_1 X_0 \\{}
[X_1,Y_1]=-[Y_1,X_1]=-2Y_2 \\{}
[X_1,Y_2]=-[Y_2,X_1]=-2b_1e^{*} \\{}
[Y_1,Y_1]=-2X_0
 \end{array} $$
 This family is of filiform type with respect to the  flag :
$$ V_4 \supset V_3 \supset V_2 \supset V_1
\supset
 V_0$$
 where $V_4=span_{\CC}\{e, Y_1,Y_2,e^*\}$,  $V_3=span_{\CC}\{ Y_1,Y_2,e^*\}$, $V_2=span_{\CC}\{ Y_2,e^*\}$,  $V_1=span_{\CC}\{ e^* \}$ and $V_0=\{ 0\}$. Moreover, the even invertible skew-supersymmetric superderivation of
$\displaystyle(\widetilde{\gg},\widetilde{B})$ is defined by
\begin{eqnarray*}
\displaystyle && \widetilde{\delta}(e^\ast) \;=\; -3b_1 e^\ast,\\
\hspace*{0cm}&& \widetilde{\delta}(e) \;=\;  \mu e^* +Y_2+3b_1e,\\
\hspace*{0cm}&&\widetilde{\delta}(X_0) \;=\;
 2b_1X_0,\\
 \hspace*{0cm}&&\widetilde{\delta}(X_1) \;=\;
 -2b_1X_1,\\
 \hspace*{0cm}&&\widetilde{\delta}(Y_1) \;=\;
 b_1Y_1+e^*,\\
 \hspace*{0cm}&&\widetilde{\delta}(Y_2) \;=\;
 -b_1Y_2,
\end{eqnarray*}
\end{exa}

Let us show now the converse of Proposition \ref{prop4.2}.
 
\begin{thm}\label{pro:convasdfg234}
Let $(\gg,B,\omega)$ be a quadratic symplectic Lie superalgebra of filiform type.
\begin{itemize}
\item[(i)] If $\dim \gg_{\bar 1}=2$ then $(\gg,B,\omega)$ is an elementary odd double extension of the quadratic  symplectic    Lie algebra by the one-dimensional Lie superalgebra $(\mathbb{K}e)_{\bar 1}$.
\item[(ii)] If $\dim \gg_{\bar 1}>2$ then $(\gg,B,\omega)$ is the generalized double extension of a quadratic symplectic Lie superalgebra of a filiform type $(\tilde{\gg},\tilde{B},\tilde{\omega})$ ($\dim \tilde{\gg} = \dim \gg-2$) by the one-dimensional Lie superalgebra $(\mathbb{K}e_m)_{\bar 1}$.
\end{itemize}
\end{thm}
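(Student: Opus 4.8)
The plan is to exhibit an odd, isotropic, central line in $\gg$, to factor it out together with its orthogonal, and to recognise $\gg$ as the (generalized) double extension of the resulting quotient; items (i) and (ii) will then correspond to whether the quotient has vanishing or non-vanishing odd part. Concretely: by Corollary \ref{cor} the algebra $\gg$ is nilpotent, and being of filiform type its odd part carries a flag $\gg_{\bar 1}=V_m\supset\cdots\supset V_1\supset V_0$. By Lemma \ref{lem1} one has $V_1=\KK e_1=\mathfrak{z}(\gg)\cap\gg_{\bar 1}$ and $\omega(e_1,e_1)=0$, while $B(e_1,e_1)=0$ holds automatically since $B$ is even and supersymmetric, hence alternating on $\gg_{\bar 1}$. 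Thus $I:=V_1=\KK e_1$ is a one-dimensional odd graded ideal, central and isotropic for both $B$ and $\omega$. Writing $\omega(X,Y)=B(\delta(X),Y)$ with $\delta$ the unique even invertible skew-supersymmetric superderivation, I note that $\delta$ preserves the centre (every superderivation does) and its parity, so $\delta(I)=I$ and $\delta(e_1)=\alpha e_1$ for some $\alpha\neq 0$. A short computation with skew-supersymmetry then gives $I^{\perp_\omega}=I^{\perp_B}=:I^\perp$, which is a graded ideal by Remark \ref{Remark_ideal}, with $I\subseteq I^\perp$ of codimension one and $\delta(I^\perp)=I^\perp$.

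Next I would set $\tilde\gg:=I^\perp/I$, of dimension $\dim\gg-2$, and push $B,\omega,\delta$ down to $\tilde B,\tilde\omega,\tilde\delta$; these descend and stay non-degenerate precisely because $I\subseteq I^\perp$ is isotropic, so $(\tilde\gg,\tilde B,\tilde\omega)$ is again quadratic symplectic with $\tilde\omega=\tilde B(\tilde\delta\,\cdot,\cdot)$. Filiform type is preserved: Lemma \ref{lem: V1Vi} gives $V_1\perp_B V_{m-1}$, so $V_{m-1}\subseteq I^\perp$, whereas $V_m=\gg_{\bar 1}\not\subseteq I^\perp$ because $B$ is non-degenerate on $\gg_{\bar 1}$ and $e_1$ is isotropic; hence $I^\perp\cap\gg_{\bar 1}=V_{m-1}$ and, since $\gg_{\bar 0}\subseteq I^\perp$, one gets $\tilde\gg_{\bar 0}=\gg_{\bar 0}$ and $\tilde\gg_{\bar 1}=V_{m-1}/V_1$ with the descended flag $V_{m-1}/V_1\supset\cdots\supset V_2/V_1\supset 0$, which is filiform of length $m-2$.

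To recover the extension data, normalise $e_m$ (the top of the flag, whose pairing with $e_1$ is forced to be non-trivial by Lemma \ref{lem: V1Vi}) so that $B(e_1,e_m)=1$, put $e^*:=e_1$, $e:=e_m$, $X_0:=[e_m,e_m]\in\gg_{\bar 0}$, and define the odd map $D$ on $\tilde\gg$ by $D(\bar X):=\overline{[e_m,X]}$; this is well defined because $I^\perp$ is an ideal and $[e_m,e_1]=0$. The super Jacobi identity on $(e_m,e_m,e_m)$ gives $[e_m,X_0]=0$, hence $D(X_0)=0$; the identity $({\rm ad}_{e_m})^2=\tfrac12{\rm ad}_{[e_m,e_m]}$ for odd $e_m$ yields $D^2=\tfrac12{\rm ad}_{X_0}$; and invariance gives $B(X_0,X_0)=B(e_m,[e_m,X_0])=0$, so $X_0$ satisfies exactly the hypotheses of the generalized double extension. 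Reading off the $\KK e^*$-components of $[e_m,X]$ and of $[X,Y]$ through invariance of $B$ reproduces the bracket formulas, so on the quadratic level $\gg$ is the generalized double extension of $(\tilde\gg,\tilde B)$ by $(\KK e_m)_{\bar 1}$, as in \cite{quadraticFLSA}. The dichotomy is now immediate: if $\dim\gg_{\bar 1}=2$ then $V_{m-1}=V_1=I$, so $\tilde\gg_{\bar 1}=0$ and $\tilde\gg=\gg_{\bar 0}$ is a quadratic symplectic Lie algebra in which (by super Jacobi on $(e_m,e_m,Z)$, $Z\in\gg_{\bar 0}$, together with $[\gg_{\bar 0},V_2]=V_1\subseteq\mathfrak{z}(\gg)$) the element $X_0$ is central; the construction collapses to the elementary odd double extension of Theorem \ref{elementary odd double extion}, giving (i). If $\dim\gg_{\bar 1}>2$ then $\tilde\gg_{\bar 1}\neq 0$ and we are in case (ii).

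The crux, and the step I expect to be delicate, is to verify that $\delta$ descends compatibly with the extension, that is, to produce $\alpha\in\KK\setminus\{0\}$, $\mu\in\KK$ and $A_1\in\tilde\gg_{\bar 1}$ satisfying the coupled relations \eqref{eq:nhju113} and \eqref{eq:nhju114} of Proposition \ref{prop4.2} (respectively the single relation \eqref{eq1} with $\beta=-2\alpha$ in case (i)). Here $\alpha$ is the eigenvalue of $\delta$ on $e_1$, while $\mu e^*+A_1$ is the $\bigl(\KK e^*\oplus(I^\perp\cap\gg_{\bar 1})\bigr)$-part of $\delta(e_m)$, the leading coefficient $-\alpha$ being forced by skew-supersymmetry via $B(\delta(e_m),e_1)=-B(e_m,\delta(e_1))$. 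Applying the superderivation rule for $\delta$ to the brackets $[e_m,X]$ and $[e_m,e_m]=X_0$ and projecting to $\tilde\gg$ then yields precisely \eqref{eq:nhju113}--\eqref{eq:nhju114}; this bookkeeping — ensuring that the single datum $e_1$ simultaneously trivialises both $B$ and $\omega$ and that the induced $\tilde\delta$ and $D$ satisfy these identities — is the only real obstacle. Once it is settled, Proposition \ref{prop4.2} and Theorem \ref{elementary odd double extion}, read in reverse, identify $(\gg,B,\omega)$ with the asserted (generalized) double extension of the quadratic symplectic filiform superalgebra $(\tilde\gg,\tilde B,\tilde\omega)$, completing (i) and (ii).
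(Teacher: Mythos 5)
Your argument is correct and follows essentially the same route as the paper: single out $V_1=\mathbb{K}e_1=\mathfrak{z}(\gg)\cap\gg_{\bar 1}$, isotropic for both $B$ and $\omega$ and stable under $\delta$ with eigenvalue $\alpha$, pick $e_m$ with $B(e_1,e_m)=1$, reduce to the codimension-two quadratic symplectic piece, and read off the (elementary odd, resp.\ generalized) double-extension data $X_0=[e_m,e_m]$, $D$, $\alpha$, $\mu$, $A_1$ from the brackets with $e_m$ and from $\delta(e_m)$. The only cosmetic difference is that you realise the reduced superalgebra as the quotient $I^{\perp}/I$ throughout, while the paper uses the orthogonal complement $(\mathbb{K}e_1\oplus\mathbb{K}e_m)^{\perp}$ in case (ii) and defers the final compatibility check to the cited converse theorems rather than verifying the relations directly.
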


\begin{proof}
(i) If $\dim \gg_{\bar 1} = 2$, then $$\gg_{\bar 1} = V_2 = \mathbb{K}e_1 \oplus \mathbb{K}e_2 \supset V_1 = \mathbb{K}e_1 \supset V_0 = \{0\}.$$  By Lemma \ref{lem1}, we have $V_1 = \mathfrak{z}(\gg) \cap \gg_{\bar 1}$. Since $[\gg_{\bar 0},\gg_{\bar 1}]=\mathbb{K}e_1$ and $\mathbb{K}e_1 \subset \mathfrak{z}(\gg)$ then  $B(e_1,e_1)=0$, by invariance of $B$. Let us note $X_0 := [e_2,e_2]$.
Let $X  $ be any element in  $ \gg_{\bar 0}$. We have
$$[X,X_0] =  \bigl[X,[e_2,e_2]\bigr] = \bigl[[X,e_2],e_2\bigr] + \bigl[e_2,[X,e_2]\bigr] = 0$$ because $[X,e_2] \in V_1$.
By Jacobi identity we also have $$\bigl[e_2,[e_2,e_2]\bigr] = 0,$$  and since $e_1 \in \mathfrak{z}(\gg)$, $$\bigl[e_1,[e_2,e_2]\bigr] = 0,$$ so we conclude that $[e_2,e_2] \in \gg_{\bar 0} \cap \mathfrak{z}(\gg)$. We have $[\gg_{\bar 1},\gg_{\bar 1}]=\mathbb{K}[e_2,e_2]$ because $e_1 \in \mathfrak{z}(\gg)$. Since $B([\gg_{\bar 1},\gg_{\bar 1}],\gg_{\bar 0}) = B(\gg_{\bar 1},[\gg_{\bar 0},\gg_{\bar 1}])$ we get $[\gg_{\bar 1},\gg_{\bar 1}]\neq \{0\}$ due to $[\gg_{\bar 0},\gg_{\bar 1}]=\mathbb{K}e_1 \neq \{0\}$. Therefore $[e_2,e_2]\neq 0$. Moreover, by invariance of $B$ we have $B(X_0,X_0)=0$. Then $(\gg,B)$ is the elementary odd double extension of the quadratic Lie algebra $(\mathfrak{h} := V_1^{\perp} / V_1,\overline{B})$, where $\overline{B}(\overline{X},\overline{Y}):=B(X,Y)$ for all $X,Y \in V_1^{\perp}$, by the one-dimensional Lie superalgebra $\mathbb{K}e_2$ with null product (by means of $X_0$).

By Lemma \ref{lem1}, we have $V_1 = \mathfrak{z}(\gg) \cap \gg_{\bar 1}$, and hence $\delta (V_1) = V_1$ and $\delta (e_1) = \alpha e_1$, where $\alpha \in \mathbb{K} \setminus \{0\}$.
Since $\delta$ is skew-symmetric with respect of $B$, then $\delta(V_1^{\perp}) = V_1^{\perp}$. Consequently $\overline{\delta} : \mathfrak{h} \to \mathfrak{h}$ given by $\overline{\delta}(\overline{X}):=\delta(X)$, for $\overline{X} \in \mathfrak{h}$, is well-defined and $\overline{\delta}$ is an invertible derivation of $\mathfrak{h}$ such that $\overline{\delta}$ is skew-symmetric with respect of $\overline{B}$.
Therefore $(\mathfrak{h} := V_1^{\perp} / V_1, \overline{B}, \overline{\omega})$ is a quadratic symplectic Lie algebra, where $\overline{\omega} : \mathfrak{h} \times \mathfrak{h} \to \mathbb{K}$ defined as $$\overline{\omega}(\overline{X},\overline{Y}) := \overline{B}(\overline{\delta}(\overline{X}), \overline{Y}) = B(\delta(X),Y) = \omega(X,Y),$$ for $\overline{X}, \overline{Y} \in  \mathfrak{h}$.
Since $\delta$ is even, we get $\delta(e_2)=\mu e_2 + \nu e_1$, with $\mu ,\nu \in \mathbb{K}$. Consequently,
\begin{align*}
\delta(X_0) &= \delta([e_2,e_2])\\
&= [\delta(e_2),e_2] + [e_2,\delta(e_2)]\\
&= 2[\delta(e_2),e_2]\\
&= 2[\mu e_2 + \nu e_1, e_2]\\
&= 2\mu [e_2,e_2]\\
&= 2\mu X_0.
\end{align*}
The fact that $B(\delta(e_1),e_2)=-B(e_1,\delta(e_2))$ implies that $\alpha =-\mu$.
It is clear that $(\gg,B,\omega)$ is also the odd double extension of $(\mathfrak{h}, \overline{B}, \overline{\omega})$ by the one-dimensional Lie superalgebra  $\mathbb{K}(e_2 + \frac{\nu}{2\mu} e_1)$ with null product (by means of $\overline{X}_0$).

Let us remark that if we note $$e:=e_2 + \frac{\nu}{2\mu} e_1, \hspace{2cm} e^*:=e_1, \hspace{2cm} \beta := 2\mu,$$ then $$\delta(e) = \frac{\beta}{2} e, \hspace{2cm} \delta(e^*) = -\frac{\beta}{2} e^*, \hspace{2cm} \delta(X_0) = \beta X_0.$$

Applying Theorem \ref{elementary odd double extion}, we conclude that $(\gg,B,\omega)$ is the elementary odd double extension of the quadratic symplectic Lie algebra $(V_1^{\perp} / V_1, \overline{B}, \overline{\omega})$ by the one-dimensional Lie superalgebra $(\mathbb{K}e)_1$ (by means of $X_0$ and $\beta$).

(ii) Let us assume that $(\gg, B, \omega)$ is a quadratic symplectic Lie superalgebra of filiform type, i.e. $\gg_{\bar 1}$ has the structure of filiform $\gg_{\bar 0}$-module with the flag $$\gg_{\bar 1} = V_m \supset V_{m-1} \supset \dots \supset V_1 \supset V_0,$$ and we denote $V_1:=\KK e_1$.
By  Lemma \ref{lem1}  we know that  $V_1 = {\mathfrak z}(\gg) \cap \gg_{\bar 1}$. 
By  Lemma $2.13$ of \cite{quadraticFLSA},  we have $B(\gg_{\bar 0} \oplus V_{m-1}, e_1) = \{0\}$ and as $B$ is nondegenerate, then there exists  $e_m \in V_m \backslash V_{m-1}$  such that $B(e_1,e_m)\neq 0$. We may assume that $B(e_1,e_m) = 1$ (we recall that as $B \mid_{\gg_{\bar 1} \times \gg_{\bar 1}}$ is skew-symmetric then $B(e_m,e_m)=0$ and $B(e_1,e_1)=0$). 
Let us consider ${\mathfrak h}: = (\KK e_1\oplus \KK e_m)^\perp$, that is, ${\mathfrak h} = {\mathfrak h}_{\bar 0} \oplus {\mathfrak h}_{\bar 1}$ is the  $\mathbb{Z}_2$-graded vector space orthogonal to $\KK e_1 \oplus \KK e_m$ with respect to $B$. It comes that $\widetilde{B} := {B} \mid_{{\mathfrak h}\times {\mathfrak h}}$ is non-degenerate. Then $\displaystyle {\gg}_{\bar 0}\subseteq {\mathfrak h}$ and $\displaystyle \gg = {\gg}_{\bar
0}\oplus \bigl(\mathbb{K}e_m \oplus {\mathfrak h}_{\bar 1} \oplus \mathbb{K}e_1 \bigr)$, because $\displaystyle {\gg} = {\mathfrak h}\oplus {\mathfrak h}^\perp$.

Let $\delta$ be the unique even  invertible skew-supersymmetric superderivation of $(\gg,B)$ such that $\omega (X,Y) = B(\delta(X),Y)$, $\displaystyle \forall \: {X,Y\in \gg}$.  Since $\delta({\mathfrak z}(\gg) \cap {\gg}_{\bar 1}) = {\mathfrak z}(\gg)\cap \gg_{\bar 1}$ and ${\mathfrak z}(\gg) \cap \gg_{\bar 1} = V_{\bar 1} = span_{\mathbb K}\{e_1\}$ (Lemma \ref{lem1}) then there exists $\alpha \in \mathbb{K} \setminus \{0\}$ such that $\delta(e_1) = \alpha e_1$.  We denote the graded ideal $\displaystyle I = \mathbb{K}e_1$. As $\displaystyle B(e_1,e_1)= 0$, then $\displaystyle I\subseteq J$, where $J$ is the orthogonal of $I$ with respect to $B$. As $I$ is stable under $\delta$, we have that $J$ is also stable under $\delta$. It can be seen that $\displaystyle J = \mathbb{K}e_1 \oplus {\mathfrak h}$ and therefore $\displaystyle {\mathfrak h}$ is a graded vector subspace of $\displaystyle \gg$ contained in the graded ideal $\displaystyle J$, thus 
\begin{eqnarray*}
\displaystyle [X,Y] &=& \alpha (X,Y) + \varphi (X,Y)e_1,
\hspace*{0.5cm}\forall \: {X, Y \in {\mathfrak h}}, 
\end{eqnarray*}
with $\displaystyle \alpha (X,Y) \in {\mathfrak h}$ and $\displaystyle
 \varphi (X,Y) \in \mathbb{K}$. And,
\begin{eqnarray*}
\displaystyle [e_m,X] &=& D (X) + \psi(X)e_1,
\hspace*{0.5cm}\forall \: { X \in {\mathfrak h}},
\end{eqnarray*}
with $\displaystyle D(X) \in {\mathfrak h}$ and $\displaystyle \psi(X) \in \mathbb{K}$.\\
\\
In \cite{quadraticFLSA}, it is proved that $\displaystyle ({\mathfrak h},\alpha
=[\cdot,\cdot]_{\mathfrak h}, \widetilde{B})$ is a quadratic Lie superalgebra of filiform type.
After we have proved the filiform type part, all the rest of the proof follows from \cite[Theorem 4.10]{simplecticLSA} with $e^*=e_1$ and $e=e_m$.
\end{proof}

From  Theorem \ref{elementary odd double extion}, where is presented the notion of elementary odd double extension of quadratic
symplectic Lie algebras,  and Corollary $4.12$ of \cite{simplecticLSA} we can assert
an inductive description of quadratic symplectic Lie superalgebras of filiform type.
\begin{cor}
Any quadratic symplectic Lie superalgebra $\displaystyle (\gg,B,\omega)$ of filiform type is obtained from a quadratic symplectic Lie algebra by an elementary odd double extension followed by a sequence of generalized quadratic symplectic double extension by the
one-dimensional Lie superalgebra $\displaystyle (\mathbb{K}e)_{{\bar
1}}$.\label{cor:vgfyty}
\end{cor}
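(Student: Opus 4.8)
The plan is to argue by descending induction on $m := \dim(\gg_{\bar 1})$, using Theorem~\ref{pro:convasdfg234} as the engine that peels off one extension at each step. First I would record that $m$ is necessarily a positive even integer: filiform type forces $m>0$, while the even, non-degenerate, supersymmetric form $B$ restricts on $\gg_{\bar 1}$ to a skew-symmetric form, because $B(X,Y)=(-1)^{xy}B(Y,X)=-B(Y,X)$ for $X,Y\in\gg_{\bar 1}$, which remains non-degenerate (the even and odd blocks pair within themselves since $B$ is even). A non-degenerate skew-symmetric bilinear form can only live in even dimension, so the induction runs over $m=2,4,6,\dots$, decreasing by $2$ at each stage.

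For the base case $m=2$, Theorem~\ref{pro:convasdfg234}(i) immediately exhibits $(\gg,B,\omega)$ as an elementary odd double extension of a quadratic symplectic Lie algebra, namely $(V_1^{\perp}/V_1,\overline{B},\overline{\omega})$, by the one-dimensional Lie superalgebra $(\KK e)_{\bar 1}$. This is exactly the asserted form, with an empty sequence of generalized double extensions, so the base case holds.

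For the inductive step $m>2$, Theorem~\ref{pro:convasdfg234}(ii) presents $(\gg,B,\omega)$ as a generalized quadratic symplectic double extension of a quadratic symplectic Lie superalgebra of filiform type $(\widetilde{\gg},\widetilde{B},\widetilde{\omega})$ with $\dim(\widetilde{\gg}_{\bar 1})=m-2$; that the extension is precisely of the symplectic filiform type described in Proposition~\ref{prop4.2}, and not merely a quadratic one, is part of the content of that theorem. By the induction hypothesis applied to $(\widetilde{\gg},\widetilde{B},\widetilde{\omega})$, the latter is obtained from a quadratic symplectic Lie algebra by a single elementary odd double extension followed by a (possibly empty) sequence of generalized quadratic symplectic double extensions. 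Appending the final generalized quadratic symplectic double extension furnished by Theorem~\ref{pro:convasdfg234}(ii) then realizes $(\gg,B,\omega)$ in the claimed form, which closes the induction.

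I expect essentially no obstacle at the level of the corollary itself, since all the substantive work is already in place: the constructive direction, that these extensions stay within the class of quadratic symplectic Lie superalgebras of filiform type, is Theorem~\ref{elementary odd double extion} together with Proposition~\ref{prop4.2}, while the inductive descent, that every such superalgebra is realized as one of these extensions, is Theorem~\ref{pro:convasdfg234}. The only point demanding care is the bookkeeping of the recursion: verifying that the object reached at the bottom is genuinely a quadratic symplectic Lie \emph{algebra} and that each peeled-off layer is of the correct type (elementary odd at the first step, generalized symplectic thereafter). Both of these are guaranteed by the cited statements, so the corollary follows by assembling them along the induction.
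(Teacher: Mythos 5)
Your proposal is correct and follows essentially the same route as the paper: iterate Theorem~\ref{pro:convasdfg234}(ii) to peel off generalized quadratic symplectic double extensions until $\dim\gg_{\bar 1}=2$, then apply Theorem~\ref{pro:convasdfg234}(i) to land on a quadratic symplectic Lie algebra via an elementary odd double extension. Your observation that $\dim\gg_{\bar 1}$ must be even (since $B$ restricts to a non-degenerate alternating form on $\gg_{\bar 1}$), so the descent terminates exactly at $2$, is a worthwhile detail the paper leaves implicit.
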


\textbf{Acknowledgment:}  The authors would like to thank the reviewers for their comments and suggestions that assist
to the authors in improving the presentation of the paper.

\end{document}